\def\tb{\textbf}
\def\ms{\mathscr}
\def\mb{\mathbb}
\def\mf{\mathfrak}
\def\a{{\alpha}}
\def\e{\varepsilon}
\def\E{{\mathcal{E}}} 
\def\F{{\mathcal{F}}} 
\def\G{{\mathcal{G}}}
\def\L{{\mathcal{L}}} 
\def\o{{\omega}}
\def\O{{\mathcal{O}}}
\def\Z{{\mathbb{Z}}}
\def\N{{\mathbb{N}}}
\def\Q{{\mathbb{Q}}} 
\def\R{{\mathbb{R}}}
\def\ol{\overline}
\def\Tr{{\mathrm{Tr}}}
\def\codim{{\mathrm{codim}}}
\def\Spec{{\mathrm{Spec\; }}}
\theoremstyle{plain}
\newtheorem{thm}{Theorem}[section] 
\newtheorem{cor}[thm]{Corollary}
\newtheorem{lem}[thm]{Lemma}
\theoremstyle{definition} 
\newtheorem{defn}[thm]{Definition}
\newtheorem{eg}[thm]{Example} 
\newtheorem{step}{Step} 
\theoremstyle{remark}
\newtheorem{rem}[thm]{Remark}
\newtheorem{obs}[thm]{Observation}
\newtheorem{ques}[thm]{Question}
\newtheorem{notation}[thm]{Notation}
\newtheorem*{cl}{Claim}
\newtheorem*{acknowledgement}{Acknowledgments}
\title{Positivity of anti-canonical divisors and \\ $F$-purity of fibers}
\author{Sho Ejiri}
\address{Graduate School of Mathematical Sciences, the University of Tokyo, 3-8-1 Komaba, Meguro-ku, Tokyo 153-8914, Japan.} 
\email{ejiri@ms.u-tokyo.ac.jp}
\begin{document}
\tolerance = 9999

\maketitle
\markboth{SHO EJIRI}{POSITIVITY OF ANTI-CANONICAL DIVISORS} 
\begin{abstract}
In this paper, we prove that given a flat generically smooth morphism between smooth projective varieties with $F$-pure closed fibers, if the source space is Fano, weak Fano or a variety with the nef anti-canonical divisor, then so is the target space. We also show that relative anti-canonical divisors of generically smooth surjective nonconstant morphisms are not nef and big in arbitrary characteristic.
\end{abstract}
%\tableofcontents
%\input{intro.tex}
\section{Introduction} \label{section:intro}
Let $f:X\to Y$ be a surjective morphism between smooth projective varieties over an algebraically closed field $k$. Koll\'ar, Miyaoka and Mori \cite[Corollary 2.9]{KMM92} proved that, under the assumption that $f$ is smooth, if $X$ is a Fano variety, that is, $-K_X$ is ample, then so is $Y$. It follows from an analogous argument that, under the same assumption, if $-K_X$ is nef, then so is $-K_Y$ (cf. \cite{Miy93}, \cite[Theorem 1.1]{FG12} and \cite[Corollary 3.15 (a)]{Deb01}). Based on these results, Yasutake asked: ``what positivity condition is passed from $-K_X$ to $-K_Y$?'' Some answers to this question are known in characteristic 0. Fujino and Gongyo \cite[Theorem 1.1]{FG11} proved that, under the assumption that $f$ is smooth, if $X$ is a weak Fano variety, that is, $-K_X$ is nef and big, then so is $Y$. Birkar and Chen \cite[Theorem 1.1]{BC12} showed that, under the same assumption, if $-K_X$ is semi-ample, then so is $-K_Y$. Furthermore, similar but weaker results hold even if $f$ is not smooth (but the characteristic of $k$ is still zero). For example, a result of Prokhorov and Shokurov \cite[Lemma 2.8]{PS09} (cf. \cite[Corollary 3.3]{FG11}) implies that if $-K_X$ is nef and big, then $-K_Y$ is big. Chen and Zhang \cite[Main theorem]{CZ13a} also proved that if $-K_X$ is nef, then $-K_Y$ is pseudo-effective. \par
In contrast, little was known about the positive characteristic case. In this paper, assuming that the geometric generic fiber has only $F$-pure or strongly $F$-regular singularities, we prove that (generalizations of) the statements above hold in positive characteristic, except the one about semi-ampleness. $F$-purity and strong $F$-regularity are mild singularities defined in terms of Frobenius splitting properties (Definition \ref{defn:F-pure}), which have a close connection to log canonical and Kawamata log terminal singularities, respectively. \par
Suppose that $k$ is an algebraically closed field of characteristic $p>0$. 
Let $f:X\to Y$ be a surjective morphism between smooth projective varieties, let $\Delta$ be an effective $\Q$-divisor on $X$ which is $\Q$-Cartier with index $m$, and let $D$ be a $\Q$-divisor on $Y$ which is $\Q$-Cartier with index $n$. Then our main theorem is stated as follows.
\begin{thm}[\textup{Theorem \ref{thm:thmp}}]\label{thm:thmp_intro}\samepage
Let $S$ be a subset of $Y$ such that the following conditions hold for every $y\in S$:
\begin{itemize} 
\item[$($i$)$] $\dim X_y=\dim X-\dim Y$.
\item[$($ii$)$] The support of $\Delta$ does not contain any irreducible component of $X_y$.
\item[$($iii$)$] $(X_{\ol y},\Delta_{\ol y})$ is $F$-pure, where ${\ol y}$ is given by $\Spec \ol{k(y)}$.
\end{itemize}
If $p\nmid m$ and $-(K_X+\Delta+f^*D)$ is nef, then $\O_Y(-n(K_Y+D))$ is weakly positive over an open subset of $Y$ containing $S$.
\end{thm}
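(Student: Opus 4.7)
My strategy is to transfer the positivity of the nef $\Q$-line bundle $\L:=\O_X(-(K_X+\Delta+f^*D))$ from $X$ to $Y$ using the $\Delta$-twisted Frobenius trace map, whose surjectivity near $S$ is precisely what $F$-purity of the fibers provides. Since $p\nmid m$, one may choose $e$ arbitrarily large so that $(p^e-1)\Delta$ is integral; for convenience of exposition I also assume $n\mid p^e-1$, so that $\L^{\otimes(p^e-1)}$ is a genuine nef line bundle.

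The core construction is a morphism $\psi_e:\E_e\to\O_Y$ with $\E_e:=f_*\O_X(-(p^e-1)(K_{X/Y}+\Delta))$. Grothendieck duality applied to the absolute Frobenius $F^e_X$, twisted by $-(p^e-1)\Delta$, yields the pair-trace $(F^e_X)_*\O_X(-(p^e-1)(K_X+\Delta))\to\O_X$. Pushing this forward by $f$, applying the projection formula together with $K_X=K_{X/Y}+f^*K_Y$ to split off the factor $\O_Y(-(p^e-1)K_Y)$, and then invoking Grothendieck duality for $F^e_Y$, produces $\psi_e$. Hypotheses (i) and (ii) ensure that the formation of $\E_e$ commutes with restriction to the fiber over each $y\in S$, and that $\psi_e$ restricts there to the $\Delta_{\ol y}$-twisted Frobenius trace on $X_{\ol y}$; hypothesis (iii) says the latter is surjective for $e\gg 0$, and Nakayama's lemma together with semicontinuity then promote this to surjectivity of $\psi_e$ on an open neighborhood $U\supset S$.

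Twisting $\psi_e$ by $\O_Y(-(p^e-1)(K_Y+D))$ and applying the projection formula identifies the source with $f_*\L^{\otimes(p^e-1)}$, giving a surjection
\[
f_*\L^{\otimes(p^e-1)}\ \twoheadrightarrow\ \O_Y(-(p^e-1)(K_Y+D))\quad\tx{on }U.
\]
Since $n\mid p^e-1$, the target is a positive tensor power of $\O_Y(-n(K_Y+D))$, and because weak positivity passes to quotients and to roots of tensor powers, it suffices to show that $f_*\L^{\otimes(p^e-1)}$ is weakly positive over $U$.

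This last step is the main obstacle. In characteristic zero it would follow from Viehweg-type positivity of direct images combined with Kodaira vanishing, both unavailable here. The approach I have in mind is to iterate the Frobenius-trace construction after a further twist by the pull-back of an ample line bundle on $Y$: for each auxiliary ample $H$ and each sufficiently large $e'$, an analogous twisted-trace morphism and $F$-purity surjectivity argument produces global sections of a suitable Frobenius-twist of $S^k(f_*\L^{\otimes(p^e-1)})\otimes H$, which is the content of weak positivity over $U$. Making these base-change statements precise under only the dimension hypothesis (i), rather than flatness, is the principal technical subtlety throughout, and it is at exactly this iterated step that the $F$-purity assumption does essential work a second time, supplying the required surjections at each stage.
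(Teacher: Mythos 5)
Your first step already fails. The surjectivity of $\psi_e:\E_e=f_*\O_X(-(p^e-1)(K_{X/Y}+\Delta))\to\O_Y$ near $S$ does not follow from $F$-purity of the fibers: $F$-purity gives surjectivity of the trace as a map of \emph{sheaves on the fiber} $X_{\ol y}$, but $f_*$ is only left exact, and (granting base change for $\E_e$, which itself does not follow from (i)--(ii) alone) surjectivity of $\psi_e$ at $y$ amounts to surjectivity of the induced map on $H^0(X_{\ol y},-)$, i.e.\ to \emph{global} $F$-splitting of $(X_{\ol y},\Delta_{\ol y})$ --- a strictly stronger condition; this is exactly the distinction drawn in Remark \ref{rem:cbf1}. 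Concretely, take $X=Y\times E$ with $E$ a supersingular elliptic curve, $\Delta=0$, $D=-K_Y$: every fiber is smooth, hence $F$-pure, $-(K_X+\Delta+f^*D)=-K_{X/Y}$ is numerically trivial, $\E_e\cong\O_Y$, and fiberwise $\psi_e$ is the map $H^0(E,\O_E)\to H^0(E,\O_E)$ induced by the trace of $F_E^e$, which vanishes since it is Serre dual to the (zero) Frobenius action on $H^1(E,\O_E)$; so $\psi_e=0$ and your surjection $f_*\L^{\otimes(p^e-1)}\to\O_Y(-(p^e-1)(K_Y+D))$ does not exist, even though the hypotheses of the theorem hold. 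This is precisely why the paper only asserts surjectivity of the pushed-forward trace after twisting by a sufficiently positive multiple of a relatively ample divisor $A$, with the bound supplied by relative Fujita vanishing applied to the kernel (Lemma \ref{lem:relFrob}(2)), and why the whole proof is organized around the asymptotic invariant $t_S$ rather than around an honest surjection onto the target line bundle. (A smaller issue of the same kind: $n\mid p^e-1$ cannot be arranged when $p\mid n$; only $p\nmid m$ is assumed, and the paper absorbs the index of $D$ into the rational scaling built into $t_S$.)

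The second half is also a genuine gap, as you acknowledge: weak positivity of $f_*\L^{\otimes(p^e-1)}$ over $U$ is not a known result and is not established by your sketch. Positive-characteristic weak positivity theorems apply to sheaves of the form $f_*(\o_{X/Y}^{m}\otimes(\textup{nef}))$, not to anti-pluricanonical pushforwards (compare Corollary \ref{cor:notbig}, which shows such sheaves are never big), and the iteration you propose would again need fiberwise $F$-purity to yield surjectivity on global sections after pushforward --- the same illegitimate step as above. The paper's actual argument never proves, nor needs, weak positivity of $f_*\L^{\otimes(p^e-1)}$: it plays three estimates for the invariant $t_S$ of the sheaves $f_*\O_X(l(K_{X/Y}+\Delta)+m'A)$ against each other --- a lower bound growing linearly in $-l$ coming from nefness of $-(K_X+\Delta+f^*D)$ together with global generation after an ample twist (Step 3), superadditivity via relative multiplication maps (Step 5), and the Frobenius upper bound $t(1-p^e,m'p^e)\le p^e\,t(0,m')$ coming from the twisted trace (Step 6) --- to force $t(0,m')=+\infty$, which by Lemma \ref{lem:ts}(4) is what yields weak positivity of $-(K_Y+D)$ over an open subset containing $S$. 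To salvage your outline you would have to replace both of your key claims by twisted, asymptotic versions of this kind; as written, neither holds.
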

Here, weak positivity over $V\subseteq Y$ of line bundles on $Y$ is a property which can be viewed as a generalization of nefness (Definition \ref{defn:positivity}), and is equivalent to nefness if $V=Y$ (Remark \ref{rem:wp}). In Section \ref{section:thmp}, Theorem \ref{thm:thmp} is proved in a more general setting. \par
The following two theorems are corollaries of Theorem \ref{thm:thmp_intro}.
\begin{thm}[\textup{Corollary \ref{cor:gl}}] \label{thm:gl_intro} \samepage
Assume that $f$ is flat, the support of $\Delta$ does not contain any component of any fiber, and that $(X_{y},\Delta_{y})$ is $F$-pure for every closed point $y\in Y$. 
\begin{itemize}
\item[$(1)$] If $p\nmid m$ and $-(K_X+\Delta+f^*D)$ is nef, then so is $-(K_Y+D)$.
\item[$(2)$] If $-(K_X+\Delta+f^*D)$ is ample, then so is $-(K_Y+D)$.
\end{itemize}
\end{thm}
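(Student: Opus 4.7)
The plan is to derive the corollary directly from Theorem \ref{thm:thmp_intro} by taking $S$ to be the set of all closed points of $Y$.

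First I verify the three hypotheses of Theorem \ref{thm:thmp_intro} at every closed point $y\in Y$. Condition (ii) is a standing assumption of the corollary. For condition (iii), since $k$ is algebraically closed and $y$ is closed one has $k(y)=k=\overline{k(y)}$, so $X_{\overline y}=X_y$ and $\Delta_{\overline y}=\Delta_y$, and the assumed $F$-purity of $(X_y,\Delta_y)$ gives exactly (iii). For condition (i), the flatness of $f$ between the smooth (hence Cohen--Macaulay) varieties $X$ and $Y$ forces every fiber to be equidimensional of dimension $\dim X-\dim Y$.

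For part (1), Theorem \ref{thm:thmp_intro} then yields that $\O_Y(-n(K_Y+D))$ is weakly positive over some open $U\subseteq Y$ containing every closed point of $Y$. But $Y$ is a variety over $k$, so every nonempty closed subset of $Y$ contains a closed point; hence the only open subset of $Y$ containing $S$ is $Y$ itself, i.e.\ $U=Y$. Weak positivity over the whole base is exactly nefness by Remark \ref{rem:wp}, which proves (1). For part (2), I would reduce ampleness to the nef case via a standard perturbation. Fix an ample Cartier divisor $A$ on $Y$; for a sufficiently small rational $\epsilon>0$ the class
\[
-(K_X+\Delta+f^*(D+\epsilon A))=-(K_X+\Delta+f^*D)-\epsilon f^*A
\]
remains ample, since $f^*A$ is nef and $-(K_X+\Delta+f^*D)$ is ample. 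Applying (1) to $D+\epsilon A$ (noting that the index condition in (1) constrains only $\Delta$, which is unchanged) gives nefness of $-(K_Y+D+\epsilon A)$, and then $-(K_Y+D)=-(K_Y+D+\epsilon A)+\epsilon A$ exhibits $-(K_Y+D)$ as a sum of a nef and an ample divisor, hence as ample.

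All of the substantive work lies in Theorem \ref{thm:thmp_intro} itself; this corollary is essentially a repackaging. The one genuinely useful observation is the topological remark that pins $U$ down as all of $Y$ in part (1), after which the nef statement is immediate and the ample statement follows by the standard twist-by-ample trick. The main obstacle is therefore not in proving the corollary, but in establishing the theorem from which it is extracted.
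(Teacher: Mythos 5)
Your part (1) is correct and is essentially the paper's own route: the corollary is deduced from the weak-positivity theorem by taking $S$ to be the set of closed points, noting that an open set containing all closed points of a variety must be $Y$ itself, and using that weak positivity of a line bundle over all of $Y$ is nefness (Remark \ref{rem:wp}).

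Part (2), however, has a genuine gap. Your reduction of (2) to (1) by replacing $D$ with $D+\epsilon A$ needs the hypothesis $p\nmid m$ of part (1), and statement (2) does \emph{not} assume $p\nmid m$; this is precisely why (2) is stated separately. If $p\mid m$ (i.e.\ $p$ divides the $\Q$-Cartier index of $\Delta$), your appeal to (1) is simply unavailable, no matter how you perturb $D$, since the index condition concerns $\Delta$, which your argument leaves untouched. The paper deals with this inside Theorem \ref{thm:thmp}: in the ample case one first replaces $\Delta$ itself by a slightly smaller divisor $\Delta'$ whose denominator is prime to $p$ (writing $a=mp^{c}$ with $p\nmid m$ and taking $a'=m(p^{e}+1)$, $\Delta'=(p^{e-c}E)\otimes a'^{-1}$ for $e\gg0$), so that $\Delta-\Delta'=\tfrac{1}{p^{e}+1}\Delta\ge 0$; the ampleness of $-(K_X+\Delta+f^*D)$ is what absorbs this loss and keeps $-(K_X+\Delta'+f^*D)$ ample, while $\Delta'\le\Delta$ guarantees the fibers remain $F$-pure for the new pair. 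Only after this modification does one perturb by $\epsilon f^*H$ and invoke the nef statement. In short, your twist-by-ample trick on $D$ proves (2) only under the extra assumption $p\nmid m$; the missing idea is the perturbation of $\Delta$ to kill the $p$-part of its index, which requires the ampleness hypothesis and is the actual content separating (2) from (1).
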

\begin{thm}[\textup{Corollary \ref{cor:gen}}] \label{thm:gen_intro} \samepage
Assume that $(X_{\ol\eta},\Delta_{\ol\eta})$ is $F$-pure, where $\ol\eta$ is the geometric generic point of $Y$. 
\begin{itemize} \item[$(1)$] If $p\nmid m$ and $-(K_X+\Delta+f^*D)$ is nef, then $-(K_Y+D)$ is pseudo-effective. 
\item[$(2)$] If $-(K_X+\Delta+f^*D)$ is ample, then $-(K_Y+D)$ is big. 
\item[$(3)$] If $(X_{\ol\eta},\Delta_{\ol\eta})$ is strongly $F$-regular and $-(K_X+\Delta+f^*D)$ is nef and big, then $-(K_Y+D)$ is big.  \end{itemize}
\end{thm}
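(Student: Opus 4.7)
The plan is to deduce all three parts from Theorem \ref{thm:thmp_intro} applied with $S=\{\eta\}$, the generic point of $Y$. Conditions (i)--(iii) of that theorem all hold at $\eta$: the dimension condition is automatic at a generic point, the support of $\Delta$ cannot contain $X_\eta$ (a divisor of $X$ cannot contain the generic point of $X$), and $F$-purity of $(X_{\ol\eta},\Delta_{\ol\eta})$ is the standing hypothesis; the latter also forces $p\nmid m$, since $F$-purity requires $(p^e-1)(K+\Delta)$ to be Cartier for some $e\geq 1$. Theorem \ref{thm:thmp_intro} therefore yields that $\O_Y(-n(K_Y+D))$ is weakly positive over a non-empty open subset $V\subseteq Y$. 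By Remark \ref{rem:wp}, weak positivity over a non-empty open subset forces $-(K_Y+D)$ to be pseudo-effective, which gives $(1)$.

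For $(2)$ I would reduce to $(1)$ by twisting on the base. Fix an ample divisor $H$ on $Y$; since ampleness is an open condition, $-(K_X+\Delta+f^*(D+\epsilon H))$ is still ample, hence nef, for every sufficiently small rational $\epsilon>0$. Applying $(1)$ to the datum $(\Delta,D+\epsilon H)$ shows that $-(K_Y+D+\epsilon H)$ is pseudo-effective, so that
\begin{align*}
-(K_Y+D)\;=\;-(K_Y+D+\epsilon H)\;+\;\epsilon H
\end{align*}
is a pseudo-effective plus an ample divisor, and is therefore big.

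For $(3)$ I would upgrade the nef-and-big hypothesis to ampleness at the cost of enlarging $\Delta$ slightly, then invoke $(2)$. Set $N:=-(K_X+\Delta+f^*D)$. Kodaira's lemma applied to the nef and big $\Q$-divisor $N$ produces a decomposition $N\sim_\Q A+E$ with $A$ an ample $\Q$-divisor and $E$ an effective $\Q$-divisor on $X$. For small positive rational $\epsilon$, put $\Delta_\epsilon:=\Delta+\epsilon E$. Since strong $F$-regularity of a pair is preserved under sufficiently small effective perturbations of the boundary, $(X_{\ol\eta},(\Delta_\epsilon)_{\ol\eta})$ remains strongly $F$-regular, and in particular $F$-pure. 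A direct computation gives
\begin{align*}
-(K_X+\Delta_\epsilon+f^*D)\;=\;N-\epsilon E\;\sim_\Q\;\epsilon A+(1-\epsilon)N,
\end{align*}
the sum of an ample and a nef divisor, which is itself ample. The datum $(\Delta_\epsilon,D)$ now satisfies the hypotheses of $(2)$, so $-(K_Y+D)$ is big.

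The principal technical obstacle is the openness statement invoked in $(3)$, namely that small effective perturbations of the boundary preserve strong $F$-regularity of the geometric generic fiber; this is the positive-characteristic analogue of the openness of klt and is standard in the theory of $F$-singularities. One must also check that $\epsilon$ can be chosen rational so that the new index $m'$ of $\Delta_\epsilon$ is prime to $p$, but this is forced automatically by the $F$-purity of $(X_{\ol\eta},(\Delta_\epsilon)_{\ol\eta})$, so no additional work is needed.
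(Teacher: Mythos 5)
Your (1) is the paper's argument: apply Theorem \ref{thm:thmp_intro} with $S=\{\eta\}$ and conclude pseudo-effectivity via Remark \ref{rem:wp}; and your (3) (Kodaira's lemma on $X$ plus the fact that a strongly $F$-regular pair remains so after adding a small effective boundary) is exactly the paper's deduction of (3) from (2). The genuine gap is in (2). You deduce (2) from (1) keeping the same boundary $\Delta$, and (1) needs $p\nmid m$; statement (2) does not assume this, and your justification --- that $F$-purity of $(X_{\ol\eta},\Delta_{\ol\eta})$ forces $p\nmid m$ because $F$-purity would require $(p^e-1)(K+\Delta)$ to be Cartier --- is false in this paper's framework. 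Definition \ref{defn:F-pure} imposes no condition on Cartier indices; for instance $(\mathbb A^1,\tfrac1p\,\mathrm{div}(x))$ is $F$-pure, so a fiberwise $F$-pure boundary can have index divisible by $p$ (and in any case the index of $\Delta$ on $X$ is not controlled by the geometric generic fiber). The same unjustified claim is invoked again at the end of your (3), where $\Delta_\epsilon=\Delta+\epsilon E$ may likewise acquire an index divisible by $p$.

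The paper avoids this by proving the ample case with no index hypothesis at all (Theorem \ref{thm:thmp} (2)); internally, the reduction to the nef case is not just your twist by $\epsilon f^*H$, but is preceded by shrinking the boundary itself: writing $m=m_0p^{c}$ with $p\nmid m_0$, one replaces $\Delta$ by $\Delta'=\tfrac{p^{e}}{p^{e}+1}\Delta$ for $e\gg0$, whose index divides $m_0(p^{e}+1)$ and hence is prime to $p$; since $\Delta'\le\Delta$, $F$-purity of the geometric generic fiber is preserved, and $-(K_X+\Delta'+f^*D)=-(K_X+\Delta+f^*D)+\tfrac{1}{p^{e}+1}\Delta$ is still ample for $e\gg0$, after which one perturbs by $\epsilon f^*H$ and applies the nef statement. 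If you insert this step, your (2) becomes correct, and your (3) then goes through as written, since the index of $\Delta_\epsilon$ no longer matters once the ample statement carries no index condition.
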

Theorem \ref{thm:gl_intro} is a generalization of \cite[Corollary 2.9]{KMM92} and \cite[Corollary 3.15]{Deb01} in positive characteristic.  We can also recover \cite[Corollary 2.9]{KMM92} in characteristic zero from Theorem \ref{thm:gl_intro} by standard reduction to characteristic $p$ techniques. Our proof relies on a study of the positivity of direct image sheaves for $f$ in terms of the Grothendieck trace of the relative Frobenius morphism. This is completely different from the proof of Koll\'ar, Miyaoka and Mori which relies on a detailed study of rational curves on varieties. Theorem \ref{thm:gen_intro} should be compared with \cite[Lemma~2.8]{PS09} and \cite[Main Theorem]{CZ13a}. \par
The following two theorems are direct consequences of Theorems \ref{thm:gl_intro} and \ref{thm:gen_intro}. 
\begin{thm}[\textup{Corollary \ref{cor:cbf}}]\label{thm:cbf_intro}\samepage
Assume that $(X_{\ol\eta},\Delta_{\ol\eta})$ is $F$-pure, where $\ol\eta$ is the geometric generic point of $Y$. If $p\nmid m$ and $K_X+\Delta$ is numerically equivalent to $f^*(K_Y+L)$ for some $\Q$-divisor $L$ on $Y$, then $L$ is pseudo-effective.
\end{thm}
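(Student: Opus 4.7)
The plan is to obtain this as an immediate application of Theorem \ref{thm:gen_intro}(1). The hypothesis $K_X+\Delta\equiv f^*(K_Y+L)$ is really the numerical vanishing statement $K_X+\Delta-f^*(K_Y+L)\equiv 0$, so the task is just to package this into the shape required by Theorem \ref{thm:gen_intro}, namely to produce a $\Q$-divisor $D$ on $Y$ such that $-(K_X+\Delta+f^*D)$ is nef and such that $-(K_Y+D)=L$. The $F$-purity and index hypotheses on $(X_{\ol\eta},\Delta_{\ol\eta})$ and $\Delta$ already match those of Theorem \ref{thm:gen_intro}(1) verbatim, so they can be imported without any further work.

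Concretely, I would set $D:=-K_Y-L$, which is a $\Q$-Cartier $\Q$-divisor on $Y$ since $Y$ is smooth and $L$ is a $\Q$-divisor. A direct computation then gives
\[
-(K_X+\Delta+f^*D)=-(K_X+\Delta)+f^*(K_Y+L)\equiv 0,
\]
using the hypothesis. In particular $-(K_X+\Delta+f^*D)$ is nef. Applying Theorem \ref{thm:gen_intro}(1) produces that $-(K_Y+D)=L$ is pseudo-effective, which is exactly the desired conclusion.

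There is essentially no obstacle: the entire content of the corollary is already inside Theorem \ref{thm:gen_intro}(1), and the proof reduces to making the correct choice of $D$ to absorb the numerical equivalence into the nefness hypothesis.
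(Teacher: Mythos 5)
Your proof is correct and is essentially identical to the paper's: the paper also sets $D:=-(K_Y+L)$, observes that $K_X+\Delta+f^*D$ is numerically trivial and hence nef, and applies Corollary \ref{cor:gen} (1) to conclude that $-(K_Y+D)=L$ is pseudo-effective.
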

\begin{thm}[\textup{Corollary \ref{cor:fano}}] \label{thm:fano_intro}\samepage 
Assume that $f$ is flat, that every closed fiber is $F$-pure, and that the geometric generic fiber is strongly $F$-regular. If $X$ is a weak Fano variety, that is, $-K_X$ is nef and big, then so is $Y$. 
\end{thm}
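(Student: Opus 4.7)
The plan is to deduce the theorem immediately by combining Theorems \ref{thm:gl_intro} and \ref{thm:gen_intro}, each applied with $\Delta = 0$ and $D = 0$. In this situation the $\Q$-Cartier index $m$ of $\Delta$ can be taken to equal $1$, so the condition $p \nmid m$ is automatic, and the requirement that $\mathrm{Supp}(\Delta)$ contain no fiber component is vacuous.

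For nefness of $-K_Y$: since $f$ is flat, each closed fiber $(X_y,0)$ is $F$-pure by hypothesis, and $-(K_X+\Delta+f^*D) = -K_X$ is nef because $X$ is weak Fano, all hypotheses of Theorem \ref{thm:gl_intro}(1) are satisfied, so $-K_Y$ is nef. For bigness of $-K_Y$: the geometric generic fiber $(X_{\ol\eta},0)$ is strongly $F$-regular by hypothesis, and $-K_X$ is nef and big, so Theorem \ref{thm:gen_intro}(3) yields that $-K_Y$ is big.

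Putting these together, $-K_Y$ is nef and big, which is precisely the statement that $Y$ is weak Fano. The proof presents no real obstacle of its own, since all substantive work is absorbed into the preceding corollaries: the two hypotheses of the theorem (on closed fibers and on the geometric generic fiber) correspond exactly to the hypotheses needed to establish nefness and bigness respectively, and both are genuinely required. The only minor point worth emphasizing is that one cannot replace the strong $F$-regularity assumption on $X_{\ol\eta}$ by mere $F$-purity, since Theorem \ref{thm:gen_intro}(1) would then give only pseudo-effectivity of $-K_Y$ rather than bigness.
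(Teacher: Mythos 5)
Your proposal is correct and is essentially the paper's own argument: the paper proves this (as Corollary \ref{cor:fano}(2)) by citing Corollaries \ref{cor:gl} and \ref{cor:gen}(3) with $\Delta=D=0$, exactly as you do, with nefness of $-K_Y$ coming from the $F$-purity of closed fibers and bigness from strong $F$-regularity of the geometric generic fiber. Your added remarks about $p\nmid m$ being automatic for $m=1$ and about why strong $F$-regularity cannot be weakened to $F$-purity are accurate but not needed beyond what the paper records.
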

Theorem \ref{thm:fano_intro} is a positive characteristic counterpart of \cite[Thorem 1.1]{FG11}. \par
For another application of Theorem \ref{thm:thmp_intro}, we return to the situation where $k$ is of arbitrary characteristic. Suppose that $f:X\to Y$ is a generically smooth surjective morphism between smooth projective varieties over an algebraically closed field of arbitrary characteristic and in addition that the dimension of $Y$ is positive.
\begin{thm}[\textup{Corollary \ref{cor:notample} and Theorem \ref{thm:notample0}}]\label{thm:notample_intro}\samepage
$-K_{X/Y}$ is not nef and big.
\end{thm}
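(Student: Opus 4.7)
The plan is to proceed by contradiction and invoke Theorem \ref{thm:gen_intro}(3). Assume that $-K_{X/Y}$ is nef and big. Since $Y$ is smooth, $K_Y$ is Cartier, so one may take $\Delta=0$ and $D=-K_Y$ in the setup preceding Theorem \ref{thm:thmp_intro}; both have index $1$, so the condition $p\nmid m$ is vacuous. With these choices $K_X+\Delta+f^{*}D = K_X - f^{*}K_Y = K_{X/Y}$, so the nef-and-bigness hypothesis of Theorem \ref{thm:gen_intro}(3) becomes precisely the assumption on $-K_{X/Y}$.

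The singularity hypothesis on the geometric generic fiber is also automatic. Generic smoothness of $f$ means $X_\eta$ is smooth over $k(Y)$, so $X_{\ol\eta}$ is smooth, hence regular, over the algebraically closed field $\ol{k(Y)}$. In characteristic $p>0$, an algebraically closed field is $F$-finite and regular schemes over an $F$-finite field are strongly $F$-regular; therefore $(X_{\ol\eta},0)$ is strongly $F$-regular. Theorem \ref{thm:gen_intro}(3) then yields that $-(K_Y+D) = 0$ is big on $Y$, which is absurd since $\dim Y>0$. This handles Corollary \ref{cor:notample}.

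For Theorem \ref{thm:notample0}, corresponding to characteristic zero, I would reduce to the previous case by spreading out over a finitely generated $\Z$-subalgebra $R\subset k$, carrying along $X$, $Y$, $f$, and the data witnessing nef-and-bigness of $-K_{X/Y}$. After shrinking $\Spec R$, the morphism between models is generically smooth between smooth projective $R$-schemes with the relative anti-canonical divisor still nef and big on general geometric fibers of $\Spec R$; specializing to a closed point of large residue characteristic then produces an instance of the positive-characteristic case and completes the contradiction. The main subtlety is bookkeeping --- verifying that smoothness of $X_{\ol\eta}$ really gives strong $F$-regularity (via $F$-finiteness of algebraically closed fields of positive characteristic) and that nef-and-bigness descends under reduction modulo $p$ via standard openness and constructibility arguments --- but the geometric content is essentially a single application of Theorem \ref{thm:gen_intro}(3) with the canonical choice $D=-K_Y$.
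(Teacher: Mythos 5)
Your positive-characteristic argument is correct and is essentially the paper's own proof of Corollary \ref{cor:notample}: set $D=-K_Y$ and $\Delta=0$, observe that generic smoothness makes $X_{\ol\eta}$ regular over the perfect field $\ol{k(Y)}$ and hence strongly $F$-regular, and apply Corollary \ref{cor:gen}~(3) to conclude that $-(K_Y+D)=0$ would have to be big, which is absurd for $\dim Y>0$.

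The characteristic-zero half, however, has a genuine gap: you claim that nef-and-bigness of $-K_{X/Y}$ descends to the reductions $-K_{X_\mu/Y_\mu}$ ``via standard openness and constructibility arguments.'' Bigness does descend (semicontinuity of $h^0$, or spreading out a Kodaira decomposition), and ampleness descends because it is an open condition on $\Spec R$; but nefness is neither open nor constructible on the base. A nef divisor is only a limit of ample $\Q$-divisors, so the locus of $\mu\in\Spec R$ over which the reduction remains nef is a priori only a countable intersection of dense open subsets, and since $\Spec R$ has only countably many closed points this locus may miss every closed point; whether a nef divisor in characteristic zero has nef reduction at even one closed point is precisely the well-known obstruction here, and no spreading-out argument supplies it. The paper's proof of Theorem \ref{thm:notample0} is arranged exactly to avoid this: it first applies Kodaira's lemma in characteristic zero, replacing the nef-and-big divisor by an ample one at the cost of an effective boundary, i.e.\ it finds $\Delta'\ge\Delta$ with $-(K_{X/Y}+\Delta')$ ample and $(X_y,\Delta'_y)$ still klt for general closed $y$; by Takagi's theorem (Theorem \ref{thm:strFreg}) these klt fibers are of dense $F$-pure type, ampleness does reduce modulo $p$ at general closed points of $\Spec R$, and then the positive-characteristic statement for ample $-(K_{X/Y}+\Delta)$ with $F$-pure fibers (Corollary \ref{cor:notample}~(1), via Corollary \ref{cor:gen}~(2)) yields the contradiction. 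So your plan needs this extra step --- trade nefness for ampleness plus a boundary in characteristic zero and carry the $F$-pure-type hypothesis on the fibers --- rather than specializing nefness directly.
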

\begin{thm}[\textup{Corollary \ref{cor:notbig} and Theorem \ref{thm:notbig0}}]\label{thm:notbig_intro}\samepage
Assume that $\o_{X_{\ol\eta}}^{-m}$ is globally generated for an integer $m>0$, where $\ol\eta$ be the geometric generic point of $Y$. Then $f_*\o_{X/Y}^{-m}$ is not big in the sense of Definition \ref{defn:positivity}.
\end{thm}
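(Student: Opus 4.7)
The plan is to argue by contradiction via an application of Theorem~\ref{thm:thmp_intro}. Set $\mathcal{E} := f_*\omega_{X/Y}^{-m}$, and suppose that $\mathcal{E}$ is big. The global generation of $\omega_{X_{\ol\eta}}^{-m}$, combined with cohomology and base change on the dense open where $\mathcal{E}$ commutes with base change, implies that the evaluation morphism $\mathrm{ev}\colon f^*\mathcal{E} \to \omega_{X/Y}^{-m}$, and hence each of its symmetric powers $f^*\mathrm{Sym}^k\mathcal{E} \to \omega_{X/Y}^{-mk}$, is surjective over an open $U\subseteq X$ meeting the geometric generic fiber densely.

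I will next unpack bigness of $\mathcal{E}$ to extract a numerical relation on $X$. There exist an integer $\alpha>0$ and an ample line bundle $H$ on $Y$ with $\hat{S}^\alpha\mathcal{E}\otimes H^{-1}$ weakly positive. Applying weak positivity with $A := H$ and $\beta := 2$ produces some $\gamma>0$ such that $\hat{S}^{2\alpha\gamma}\mathcal{E}\otimes H^{-\gamma}$ is generically generated on $Y$; pulling back to $X$ and composing with $\mathrm{Sym}^{2\alpha\gamma}(\mathrm{ev})$, still surjective on $U$, I conclude that $\omega_{X/Y}^{-2m\alpha\gamma}\otimes f^*H^{-\gamma}$ has a nonzero global section. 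Choosing this section to be general (possible since the linear system is base-point-free on $X_{\ol\eta}$, as $\omega_{X_{\ol\eta}}^{-m}$ is globally generated), I obtain an effective divisor $E$ on $X$ with
\[
-2m\alpha\gamma\,K_{X/Y}\;\sim\;\gamma\,f^*H + E, \qquad E|_{X_{\ol\eta}}\text{ containing no component of }X_{\ol\eta}.
\]

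Setting $\Delta:=\tfrac{1}{2m\alpha\gamma}E$ and $D:=\tfrac{1}{2m\alpha}H-K_Y$, a direct rearrangement yields $K_X+\Delta+f^*D\sim_\Q 0$, so $-(K_X+\Delta+f^*D)$ is nef, while $K_Y+D=\tfrac{1}{2m\alpha}H$ is $\Q$-ample. The hypotheses of Theorem~\ref{thm:thmp_intro} hold: $X_{\ol\eta}$ is smooth by generic smoothness of $f$, and by enlarging $\gamma$ the coefficient of $\Delta$ is arbitrarily small, so $(X_{\ol\eta},\Delta_{\ol\eta})$ is strongly $F$-regular and in particular $F$-pure; the index condition $p\nmid m_\Delta$ is arranged by choosing $\alpha,\gamma$ coprime to $p$, which is possible whenever $p\nmid m$. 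The conclusion of Theorem~\ref{thm:thmp_intro} is then that $\mathcal{O}_Y(-n(K_Y+D))$ is weakly positive over an open of $Y$, hence $-(K_Y+D)=-\tfrac{1}{2m\alpha}H$ is pseudo-effective; this contradicts the fact that $-H$ is the negative of an ample divisor on a positive-dimensional $Y$.

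The main obstacle will be the index condition $p\nmid m_\Delta$ in Theorem~\ref{thm:thmp_intro} when $p\mid m$, since this cannot be fixed by tuning $\alpha$ or $\gamma$. In that case I would try to use Theorem~\ref{thm:gen_intro}(3), which carries no index restriction: a mild perturbation $\Delta\rightsquigarrow(1-\epsilon)\Delta$, $D\rightsquigarrow D-\mu H$ produces $-(K_X+\Delta'+f^*D')=\epsilon\Delta+\mu f^*H$, and one must verify that this class is nef and big. This is delicate because $E$ is in general nef only along the geometric generic fiber and not globally on $X$; resolving this point, together with the Bertini-type argument needed to ensure strong $F$-regularity of $(X_{\ol\eta},\Delta_{\ol\eta})$ in positive characteristic, is where I expect the bulk of the technical work to lie.
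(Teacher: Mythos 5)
Your reduction of bigness of $\mathcal{E}=f_*\o_{X/Y}^{-m}$ to a section of $\o_{X/Y}^{-2m\a\gamma}\otimes f^*\O_Y(-\gamma H)$, and the untwisting $K_X+\Delta+f^*D\sim_\Q 0$ with $K_Y+D$ ample, is numerically the same bookkeeping the paper performs. The genuine gap is in how you then invoke Theorem \ref{thm:thmp_intro}: condition (iii) there requires $(X_{\ol\eta},\Delta_{\ol\eta})$ to be $F$-pure, and your justification --- ``by enlarging $\gamma$ the coefficient of $\Delta$ is arbitrarily small, so the pair is strongly $F$-regular'' --- does not work, because the divisor $E=E_\gamma$ is not fixed: it is a member of (a subsystem of) $|-2m\a\gamma K_{X_{\ol\eta}}|$, so its multiplicities grow with $\gamma$ while the coefficient $\tfrac{1}{2m\a\gamma}$ shrinks at exactly the same rate. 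The perturbation fact ``$(X,\e\Delta')$ is strongly $F$-regular for $\e\ll1$'' applies only to a \emph{fixed} $\Delta'$. What you would actually need is a Bertini-type statement in characteristic $p$: a member of a merely base-point-free system on $X_{\ol\eta}$ (over the non-closed, imperfect-looking field $\ol{k(\eta)}$, and a subsystem at that) whose scaled pair is $F$-pure. Such statements fail in general in positive characteristic, and you correctly flag this as ``where the bulk of the work lies'' --- but it is not a technicality, it is the whole difficulty, and it is precisely what the paper's argument is engineered to avoid. The paper never converts the sections into a boundary: it proves Theorem \ref{thm:thmp2}, a variant of the main theorem in which the nefness of $-(K_X+\Delta+f^*D)$ is replaced by global generation of $f_*\O_X(-L)$ over $S$ together with surjectivity of the evaluation map $f^*f_*\O_X(-L)\to\O_X(-L)$ over $f^{-1}(S)$, while the $F$-purity hypothesis stays on the \emph{original} pair --- here $(X_{\ol\eta},0)$, which is automatic from generic smoothness. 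Corollary \ref{cor:notbig} then feeds the generic global generation of $(S^l\G(-m))(-H)$ (after the multiplication maps of Lemma \ref{lem:mult}) directly into Theorem \ref{thm:thmp2} to conclude that $-H_n$, hence $-H$, is weakly positive, a contradiction.

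Two further points. First, the statement has no restriction relating $m$ and $p$; in your scheme the index of your constructed $\Delta$ is divisible by $m$ (and by your factor $2$), so the case $p\mid m$ escapes Theorem \ref{thm:thmp_intro}, and your proposed fallback via Theorem \ref{thm:gen_intro}(3) does not close it: $\e\Delta+\mu f^*H$ has no reason to be nef (nefness on curves in fibers is exactly what you do not know about $E$) nor big ($f^*H$ is not big and $E|_{X_{\ol\eta}}\sim -2m\a\gamma K_{X_{\ol\eta}}$ need not be). In the paper's route this issue never arises because the relevant integer is $a=1$ ($\Delta=0$). Second, the statement is in arbitrary characteristic; your argument is purely characteristic-$p$ and does not address the characteristic-zero half, which the paper handles by reduction modulo $p$ (Theorem \ref{thm:notbig0}), using that bigness of the direct image specializes to bigness of its reductions (Lemma \ref{lem:bigred}) and that smooth fibers are of dense $F$-pure type.
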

In both of the theorems, the characteristic zero case is proved by reduction to positive characteristic. Theorem \ref{thm:notample_intro} improves a result of Koll\'ar, Miyaoka and Mori \cite[Corollary 2.8]{KMM92} which states that $-K_{X/Y}$ is not ample. Theorem \ref{thm:notbig_intro} includes a result of Miyaoka \cite[COROLLARY 2']{Miy93} which states that if $\o_{X/Y}^{-1}$ is $f$-ample and $\o_{X/Y}^{-m}$ is $f$-free for an integer $m>0$, then $f_*\o_{X/Y}^{-m}$ is not an ample vector bundle. \par
\subsection{Notation} \label{subsection:notat}
Let $k$ be a field.
A {\it $k$-scheme} is a separated scheme of finite type over $k$. 
A {\it variety} means an integral $k$-scheme. 
Let $\varphi:S\to T$ be a morphism of $k$-schemes and let $T'$ be a $T$-scheme. 
Then we denote by $S_{T'}$ and $\varphi_{T'}:S_{T'}\to T'$ respectively 
the fiber product $S\times_{T}T'$ and its second projection. 
For a Cartier or $\Q$-Cartier divisor $D$ on $S$ (resp. an $\O_S$-module $\G$), 
the pullback of $D$ (resp. $\G$) to $S_{T'}$ is denoted by $D_{T'}$ (resp. $\G_{T'}$) if it is well-defined. 
Similarly, for a homomorphism of $\O_S$-modules $\alpha:\F\to\G$, 
the pullback of $\alpha$ to $S_{T'}$ is denoted by $\a_{T'}:\F_{T'}\to \G_{T'}$. 
Assume that the characteristic of $k$ is positive.
$k$ is said to be \textit{$F$-finite} if the field extension $k/k^p$ is finite.
For a $k$-scheme $X$, $F_X:X\to X$ is the absolute Frobenius morphism. 
We often denote the source of $F_X^e$ by $X^{e}$. 
Let $f:X\to Y$ be a morphism between schemes of positive characteristic. 
The same morphism is denoted by $f^{(e)}:X^e\to Y^e$ when we regard $X$ (resp. $Y$) as $X^e$ (resp. $Y^e$). 
We define the $e$-th relative Frobenius morphism of $f$ to be 
the morphism $F^{(e)}_{X/Y}:=(F_X^e,f^{(e)}):X^e\to X\times_Y Y^e=:X_{Y^e}$. 
For a prime $p\in \Z$, $\Z_{(p)}$ denotes the localization of $\Z$ at $(p)=p\Z$.

\begin{small}
\begin{acknowledgement}
The author is greatly indebted to his supervisor Professor Shunsuke Takagi 
for suggesting the problems in this paper and for much valuable advice.
He wishes to express his gratitude to Professor Yoshinori Gongyo 
for fruitful conversations and for showing him the proof of Corollary \ref{cor:fano} (2).
He is grateful to Professors Zsolt Patakfalvi and Hiromu Tanaka 
for suggesting improvements of results in this paper.
%He would like to thank Professor Osamu Fujino for helpful comments.
He would like to thank Professors Caucher Birkar, Yifei Chen and Osamu Fujino for stimulating discussions and helpful comments.
%on Corollary \ref{cor:cbf}. 
He also would like to thank Professor Akiyoshi Sannai, 
and Doctors Takeru Fukuoka, Akihiro Kanemitsu, Eleonora Anna Romano, Kenta Sato, Antoine Song and Fumiaki Suzuki for useful comments. 
He was supported by JSPS KAKENHI Grant Number 15J09117 and the Program for Leading Graduate Schools, MEXT, Japan.
\end{acknowledgement}
\end{small}
\section{Trace maps of relative Frobenius morphisms} \label{section:inv}
In this section, we work over an $F$-finite field $k$. 
Let $f:X\to Y$ be a morphism from a pure dimensional Gorenstein $k$-scheme $X$ to a regular variety $Y$. 
Let $K_X$ be a Cartier divisor such that $\O_X(K_X)$ is isomorphic to the dualizing sheaf $\o_X$ of $X$. 
Set $K_{X/Y}:=K_X-f^*K_Y$. For each $d,e>0$ we define some notation by the following commutative diagram.
$$ \xymatrix@R=25pt @C=35pt{ X^{de} \ar[d] \ar@/^25pt/[dd]^(0.5){F_{X^{di}/Y^{di}}^{(d(e-i))}} \ar@/_30pt/[dddd]|(0.7){F_{X/Y}^{(de)}} \ar@/_50pt/[ddddd]_(0.8){f^{(de)}} &&&&&&\\ \vdots \ar[d] &&& \ddots \ar[dr]^{F_X^d} & & & \\ X^{di}_{Y^{de}} \ar[d] \ar@/^50pt/[dd]|(0.6){F_{X_{Y^{d(e-i)}}/Y^{d(e-i)}}^{(di)}} \ar@/^100pt/[ddd]^(0.7){f_{Y^{de}}^{(di)}} && & \cdots \ar[r] & X^{2d} \ar[dr]^{F_X^d} \ar[d]_{F_{X^d/Y^d}^{(d)}} & & \\ \vdots \ar[d] && & \cdots \ar[r] & X^d_{Y^{2d}} \ar[r] \ar[d]_{F_{X_{Y^d}/Y^d}^{(d)}} & X^d \ar[dr]^{F_X^d} \ar[d]_{F_{X/Y}^{(d)}} \\ X_{Y^{de}} \ar[d] && & \cdots \ar[r] & X_{Y^{2d}} \ar[r] \ar[d]_{f_{Y^{2d}}} & X_{Y^d} \ar[r]_{(F_Y^d)_X} \ar[d]_{f_{Y^d}} & X \ar[d]_f \\ Y^{de} && & \cdots \ar[r]_{F_Y^d} & Y^{2d}\ar[r]_{F_Y^d} & Y^d \ar[r]_{F_Y^d} & Y \\ } $$
Since $F_Y$ is flat, every horizontal morphism in the diagram is a Gorenstein morphism, 
and thus every object in the diagram is a pure dimensional Gorenstein $k$-scheme. 
We denote by $\Tr_{F^{(1)}_{X/Y}}:{F^{(1)}_{X/Y}}_*\o_{X^1}\to\o_{X_{Y^1}}$ 
the morphism obtained by applying the functor 
$\ms Hom_{\O_{X_{Y^1}}}(\underline{\quad},\o_{X_{Y^1}})$ 
to the natural morphism 
${F^{(1)}_{X/Y}}^{\#}:\O_{X_{Y^1}}\to{F^{(1)}_{X/Y}}_*\O_{X^1}$. 
For each $e>0$ we define 
\begin{align*} \phi^{(1)}_{X/Y}&:=\Tr_{F^{(1)}_{X/Y}}\otimes\O_{X_{Y^1}}(-K_{X_{Y^1}}):{F^{(1)}_{X/Y}}_*\O_{X^1}((1-p)K_{X^1/Y^1})\to\O_{X_{Y^1}},\quad\textup{and} \\ \phi^{(e+1)}_{X/Y}&:=\left(\phi^{(e)}_{X/Y}\right)_{Y^{e+1}}\circ {F^{(e)}_{X_{Y^1}/Y^1}}_*\left(\phi^{(1)}_{X^{e}/Y^{e}}\otimes\O_{X^e_{Y^{e+1}}}((1-p^{e})K_{X^{e}_{Y^{e+1}}/Y^{e+1}})\right)\\ & \qquad: {F^{(e+1)}_{X/Y}}_*\O_X((1-p^{e+1})K_{X^{e+1}/Y^{e+1}})\to\O_{X_{Y^{e+1}}}. \end{align*} \indent 
Let $E$ be an effective Cartier divisor on $X$, 
let $a>0$ be an integer not divisible by $p$, 
and let $d>0$ be the smallest integer satisfying $a|(p^d-1)$. 
For each $e>0$ we define 
\begin{align*} \L^{(de)}_{(X/Y,E/a)}&:=\O_{X^{de}}((1-p^{de})a^{-1}(aK_{X^{de}/Y^{de}}+E))\subseteq\O_{X^{de}}((1-p^{de})K_{X^{de}/Y^{de}}), \\ \phi^{(d)}_{(X/Y,E/a)}&:{F^{(d)}_{X/Y}}_*\L^{(d)}_{(X/Y,E/a)}\to{F^{(d)}_{X/Y}}_*\O_{X^d}((1-p^d)K_{X^{d}/Y^{d}})\xrightarrow{\phi^{(d)}_{X/Y}}\O_{X_{Y^d}},\quad\textup{and}\\ \phi^{(d(e+1))}_{(X/Y,E/a)}&:=\left(\phi^{(de)}_{(X/Y,E/a)}\right)_{Y^{de}}\circ{F^{(de)}_{X_{Y^{d}}/Y^{d}}}_*\left(\phi^{(d)}_{(X^{de}/Y^{de},E^{de}/a)}\otimes(\L^{(de)}_{(X/Y,E/a)})_{Y^{d(e+1)}}\right) \\ &:{F^{(d(e+1))}_{X/Y}}_*\L^{(d(e+1))}_{(X/Y,E/a)}\to \O_{X_{Y^{d(e+1)}}}.\end{align*} \indent
In order to generalize the definitions above, we recall the generalized divisors on $k$-schemes.
Let $X$ be a $k$-scheme of pure dimension satisfying $S_2$ and $G_1$. 
An \textit{AC divisor} (or \textit{almost Cartier divisor}) on $X$ is 
a reflexive coherent $\O_X$-submodule of the sheaf of total quotient rings of $X$ 
which is invertible in codimension one (see \cite{Har94,MS12}). 
For an AC divisor $D$ we denote the coherent sheaf defining $D$ by $\O_X(D)$. 
The set of AC divisors $WSh(X)$ has a structure of additive group \cite[Corollary 2.6]{Har94}. 
A \textit{$\Z_{(p)}$-AC divisor} is an element of $WSh(X)\otimes_\Z \Z_{(p)}$. 
An AC divisor $D$ is said to be \textit{effective} if $\O_X\subseteq\O_X(D)$, 
and a $\Z_{(p)}$-AC divisor $\Delta$ is said to be \textit{effective} if $\Delta=D\otimes r$ 
for an effective AC divisor $D$ and some $0\le r\in\Z_{(p)}$. \par
Let $f:X\to Y$ be a morphism from a pure dimensional $k$-scheme $X$ to a regular variety $Y$. 
Assume that $X$ satisfies $S_2$ and $G_1$. 
Let $E$ be an effective AC divisor on $X$,
let $a>0$ be an integer not divisible by $p$, 
and let $d>0$ be the smallest integer satisfying $a|(p^d-1)$. 
Then for each $e>0$ we define 
\begin{align*}
\L^{(de)}_{(X/Y,E/a)}&:={\iota_{Y^{de}}}_*\L^{(de)}_{(U/Y,E|_U/a)}\textup{, and}\\ 
\phi^{(de)}_{(X/Y,E/a)}&:={\iota_{Y^{de}}}_*(\phi^{(de)}_{(U/Y,E|_U/a)}):
{F^{(de)}_{X/Y}}_*\L^{(de)}_{(X/Y,E/a)}\to\O_{X_{Y^{de}}}, 
\end{align*} 
where $\iota:U\hookrightarrow X$ is a Gorenstein open subset of $X$ 
such that $\codim_X(X\setminus U)\ge2$ and that $E|_U$ is a Cartier divisor on $U$. 
If $X$ is normal, then for each $e>0$ such that $E:=(p^e-1)\Delta$ is integral, 
we denote $\L^{(e)}_{(X/Y,E/p^e-1)}$ and $\phi^{(e)}_{(X/Y,E/p^e-1)}$ respectively 
by $\L^{(e)}_{(X/Y,\Delta)}$ and $\phi^{(e)}_{(X/Y,\Delta)}$. \par
Next we introduce singularities of pairs defined by the Grothendieck trace map of the Frobenius morphism. 
%${\rm WSh}(X)$ denotes the set of AC-divisors on a $k$-scheme $X$ of pure dimension satisfying $S_2$ and $G_1$. A {\it $\Z_{(p)}$-AC divisor} on $X$ is an element of ${\rm WSh}(X)\otimes_{\Z}\Z_{(p)}$.
%
\begin{defn} \label{defn:F-pure} 
Assume that $k$ is perfect. 
Let $X$ be a $k$-scheme of pure dimension satisfying $S_2$ and $G_1$, 
and let $\Delta$ be an effective $\Q$-AC divisor on $X$. Set $Y:=\Spec k$. \\
(1) The pair $(X,\Delta)$ is said to be {\it $F$-pure} 
if for every $e>0$ and for every effective AC divisor $E$ with $E\le (p^e-1)\Delta$, 
the morphism $$\phi^{(e)}_{(X/Y,E/(p^e-1))}:{F_{X/Y}^{(e)}}_*\O_X((1-p^e)K_{X}-E)\to\O_{X_{Y^e}}$$ 
is surjective. We simply say that $X$ is $F$-pure if $(X,0)$ is $F$-pure. \\ 
(2) \textup{\cite[Definition 3.1]{Sch08}} %\label{defn:SFR} 
Assume that $X$ is a normal variety. %and let $\Delta$ be an effective $\Q$-divisor on $X$. Set $Y:=\Spec k$. 
The pair $(X,\Delta)$ is said to be {\it strongly $F$-regular} 
if for every effective Cartier divisor $D$, there exists an $e>0$ such that 
$$
\phi^{(e)}_{(X/Y,\lceil(p^e-1)\Delta\rceil+D/p^e-1)}:
{F_{X/Y}^{(e)}}_*\O_X(\lfloor(1-p^e)(K_X+\Delta)\rfloor-D)\to\O_{X_{Y^e}}
$$ is surjective. 
Here $\lceil\Delta\rceil$ (resp. $\lfloor\Delta\rfloor$) denotes the round up (resp. down) of $\Delta$. 
We simply say that $X$ is strongly $F$-regular if $(X,0)$ is strongly $F$-regular.  
\end{defn}
\begin{rem}
(1) Usually, the $F$-purity of $X$ is defined by using the absolute Frobenius morphism $F_X$ of $X$. 
In the above definition, by the assumption of the perfectness of $k$, 
we can use the relative Frobenius morphism of the structure morphism $X\to \Spec k$ instead of $F_X$.\\
(2) With the notation as in Definition \ref{defn:F-pure}, we assume that $X$ is normal and affine. 
Then Definition \ref{defn:F-pure} (1) is equivalent to \cite[Definition 2.1]{HW02} (2). 
%which is as follows: $(X,\Delta)$ is said to be $F$-pure if the morphism $$\phi^{(e)}_{(X/Y,\lfloor(p^e-1)\Delta\rfloor/(p^e-1))}:{F^{(e)}_{X/Y}}_*\O_{X^e}(\lceil(1-p^e)(K_X+\Delta)\rceil)\to \O_{X_{Y^e}}$$ is surjective for every integer $e>0$, where $\lceil\Delta\rceil$ (resp. $\lfloor\Delta\rfloor$) denotes the round up (resp. down) of $\Delta$. 
Indeed, since $\lfloor(p^e-1)\Delta\rfloor\le(p^e-1)\Delta$, 
the condition of Definition \ref{defn:F-pure} implies that 
$\phi^{(e)}_{(X/Y,\lfloor(p^e-1)\Delta\rfloor/(p^e-1))}$ is surjective. 
Conversely, since $\phi^{(e)}_{(X/Y,\lfloor(p^e-1)\Delta\rfloor/(p^e-1))}$ 
factors through $\phi^{(e)}_{(X/Y,E/(p^e-1))}$ 
for every effective Weil divisor $E$ with $E\le\lfloor(p^e-1)\Delta\rfloor$ 
(or equivalently $E\le(p^e-1)\Delta$), 
the surjectivity of $\phi^{(e)}_{(X/Y,\lfloor(p^e-1)\Delta\rfloor/(p^e-1)}$ implies 
the condition of Definition \ref{defn:F-pure}. \\
(3) Let $(X,\Delta)$ be a strongly $F$-regular pair, 
and let $\Delta'$ be an effective $\Q$-divisor on $X$. 
Then there exists an $0<\e\in\Q$ such that $(X,\Delta+\e\Delta')$ is again strongly $F$-regular.
\end{rem}
\begin{lem}\label{lem:relFrob}\samepage
Let $f:X\to Y$ be a projective morphism 
from a pure dimensional $k$-scheme $X$ satisfying $S_2$ and $G_1$ 
to a variety $Y$.
%such that every fiber satisfies $S_2$ and $G_1$.
Let $V\subseteq Y$ be a regular open subset such that $f_V:X_V\to V$ is flat.
Let $E$ be an effective AC divisor on $X$ 
whose support does not contain any component of any fiber over $V$, 
and let $a>0$ be an integer not divisible by $p$. 
Set $\Delta:=E\otimes a^{-1}$. 
Assume that $aK_{X_V}+E_V$ is a Cartier divisor on $X_V$
and that there exists a Gorenstein open subset $U\subseteq X$
such that $\codim_{X_{\ol y}}(X_{\ol y}\setminus U_{\ol y})\ge2$ for every $y\in V$.
Then the following holds. \\
$(1)$ \textup{\cite[Corollary~3.31]{PSZ13}} 
The set $V_0:=\{y\in V | (X_{\ol y},\ol{\Delta|_{U_{\ol y}}}) \textup{ is $F$-pure}\}$ is an open subset of $V$. 
Here $\ol y:=\Spec\ol{k(y)}$ and 
$\ol{\Delta|_{U_{\ol y}}}$ is the $\Z_{(p)}$-AC divisor on $X_{\ol y}$ 
obtained as the unique extension of the $\Z_{(p)}$-Cartier divisor $\Delta|_{U_{\ol y}}$ on $U_{\ol y}$.\\
$(2)$ Assume that $V_0$ is non-empty. Let $A$ be a Cartier divisor on $X$ such that $A_{V_0}$ is $f_{V_0}$-ample. 
Then there exists an $m_0>0$ such that 
\begin{align*} 
&{f_{Y^e}}_*(\phi^{(e)}_{(X/Y,E/a)}\otimes\O_{Y^e}(mA_{Y^e}+N_{Y^e})):\\
&{f^{(e)}}_*\O_{X^e}((1-p^e)a^{-1}(aK_{X^e/Y^e}+E)+p^e(mA+N))\to {f_{Y^e}}_*\O_{X_{Y^e}}(mA_{Y^e}+N_{Y^e})
\end{align*}
is surjective over $V_0$ for each $m\ge m_0$, for every Cartier divisor $N$ on $X$ 
whose restriction $N_{V_0}$ to $X_{V_0}$ is $f_{V_0}$-nef and for every $e>0$ with $a|(p^e-1)$. 
\end{lem}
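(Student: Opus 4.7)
The plan is to combine relative Fujita-type vanishing with the iterative structure of the trace map $\phi^{(e)}_{(X/Y,E/a)}$. Since the statement is Zariski-local on $Y$ and $V_0$ is open in $V$ by part (1), we may shrink $Y$ and assume $V_0=V=Y$; then every geometric fiber is $F$-pure, so each fiber restriction of $\phi^{(e)}_{(X/Y,E/a)}$ is surjective. Combined with the properness of $f_{Y^e}$ and openness of the surjectivity locus of a map of coherent sheaves, we conclude that $\phi^{(e)}_{(X/Y,E/a)}$ is surjective over some open of $X_{Y^e}$ containing $(X_{Y^e})|_{V_0^e}$. Write $K^{(e)}$ for its kernel.

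For the base case $e=d$, the four-term exact sequence
\[ 0 \to K^{(d)} \to F^{(d)}_{X/Y\,*}\L^{(d)}_{(X/Y,E/a)} \to \O_{X_{Y^d}} \to C^{(d)} \to 0, \]
with $C^{(d)}$ supported off $(X_{Y^d})|_{V_0^d}$, when tensored with $L:=\O(mA_{Y^d}+N_{Y^d})$ and pushed forward by $f_{Y^d}$, reduces the desired surjectivity over $V_0$ to the vanishing of $R^1(f_{Y^d})_*(K^{(d)}\otimes L)$ over $V_0$. Because $A_{V_0}$ is $f_{V_0}$-ample, Keeler's relative Fujita vanishing produces an $m_0$ depending only on $f$, $A$, and the single coherent sheaf $K^{(d)}$, such that this $R^1$ vanishes over $V_0$ for every $m\ge m_0$ and every Cartier $N$ with $N_{V_0}$ $f_{V_0}$-nef.

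For the inductive step $e=de'$ with $e'\ge 2$, the iterative definition factors $\phi^{(de')}_{(X/Y,E/a)}$ as the composition of a base-change of $\phi^{(d(e'-1))}_{(X/Y,E/a)}$ (outer) and a Frobenius-twisted copy of $\phi^{(d)}_{(X^{d(e'-1)}/Y^{d(e'-1)},E^{d(e'-1)}/a)}$ (inner). After twisting by $L$ and pushing forward by $f_{Y^{de'}}$, the target map becomes the composition of the pushforwards of these two factors: the outer one is surjective for $m\ge m_0$ by the inductive hypothesis, and the inner one is a single-Frobenius trace treated by the base case applied on $X^{d(e'-1)}/Y^{d(e'-1)}$ against the Frobenius pullback of $L$, an ample-plus-nef twist of positivity at least $p^{d(e'-1)}m$ times the original, so the same $m_0$ suffices. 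A composition of surjections is surjective, closing the induction.

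The main technical obstacle is uniformity of $m_0$ across all $e$: a direct application of Keeler to $K^{(e)}$ would yield an $e$-dependent bound. The iterative factorization circumvents this by routing Fujita vanishing through the single fixed sheaf $K^{(d)}$ (and its Frobenius pullbacks), against twists whose positivity only grows with $e$. The most delicate bookkeeping is checking compatibility of the trace maps with Frobenius pullback and flat base change, together with the identity $(F^{(i)}_{X/Y})^*\O(A_{Y^i})=\O(p^iA)$; both are encoded in the diagrammatic setup of Section \ref{section:inv}.
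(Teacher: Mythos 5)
Your strategy is the paper's own: check surjectivity of $\phi^{(e)}_{(X/Y,E/a)}$ fiberwise after reducing to $V_0=V=Y$, apply Keeler's relative Fujita vanishing to the kernel of the single-step trace $\phi^{(d)}_{(X/Y,E/a)}$ for the base case, and propagate along the iterative factorization of $\phi^{(de)}_{(X/Y,E/a)}$. However, the inductive step has a genuine gap, and it sits exactly at the point you flag as the delicate one. The inner factor of the factorization is not $\phi^{(d)}_{(X^{de}/Y^{de},E^{de}/a)}$ twisted merely by the Frobenius pullback of $L=\O_X(mA+N)$: by the very definition of $\phi^{(d(e+1))}_{(X/Y,E/a)}$ it carries the additional twist $(\L^{(de)}_{(X/Y,E/a)})_{Y^{d(e+1)}}$, i.e.\ the term $(1-p^{de})a^{-1}(aK_{X/Y}+E)$. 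After the projection-formula bookkeeping, the twist on the inner single-step trace is $\O_X\bigl(mA+N_{de,m}\bigr)$ with $N_{de,m}=(p^{de}-1)\bigl(mA-a^{-1}(aK_{X/Y}+E)\bigr)+p^{de}N$. Your base case only covers twists of the form ($m\ge m_0$ copies of $A$) plus an $f$-nef divisor, so to invoke it you must know that $mA-a^{-1}(aK_{X/Y}+E)$ is $f$-nef over $V_0$. Nothing gives you this: the lemma makes no sign assumption on $aK_{X/Y}+E$ (and $E$ is effective, so the $\L^{(de)}$ factor tends to be anti-effective), while your $m_0$ was declared to depend only on $f$, $A$ and the single sheaf $K^{(d)}$. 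Hence the assertion that the inner twist has ``positivity at least $p^{d(e'-1)}m$ times the original, so the same $m_0$ suffices'' is unjustified; for large $e$ the term $(1-p^{de})a^{-1}(aK_{X/Y}+E)$ must be absorbed by $(p^{de}-1)mA$, and that absorption is precisely what needs an extra hypothesis on $m_0$.

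The repair is small and is exactly the paper's extra step: since $A_{V_0}$ is $f_{V_0}$-ample, enlarge $m_0$ once more so that, besides Keeler's threshold for $K^{(d)}$, the divisor $am_0A-(aK_{X/Y}+E)$ is $f$-nef (over $V_0$). Then for every $m\ge m_0$ one has $N_{de,m}=(p^{de}-1)(m-m_0)A+(p^{de}-1)a^{-1}\bigl(am_0A-(aK_{X/Y}+E)\bigr)+p^{de}N$, which is $f$-nef, and the base case applies to the inner factor with the same ample coefficient $m$; the pushed-forward outer factor is the flat pullback ${F_Y^{d}}^*$ of the inductive surjection, hence surjective. With this one additional condition built into your choice of $m_0$, your argument coincides with the proof in the paper; the remaining parts of your sketch (reduction to $V_0=V=Y$, fiberwise surjectivity of the trace, the $R^1$-vanishing reduction in the base case, and the compatibility of the trace maps with base change) all match it.
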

\begin{proof}
Replacing $f:X\to Y$ by $f_V:X_V\to V$, we may assume that $Y$ is regular and $f$ is flat.
Take an integer $e>0$ with $a|(p^e-1)$ and a point $y\in V$.
Then by \cite[Lemma 2.18]{PSZ13} we get 
\begin{align}
\phi^{(e)}_{(X/Y,E/a)}|_{U_{\ol y}}
=\phi^{(e)}_{(U/Y,E|_U/a)}|_{U_{\ol y}}
\cong \phi^{(e)}_{\left(U_{\ol y}/\ol y,E|_{U_y}/a\right)}
:{F_{U_{\ol y}/\ol y}^{(e)}}_*\L^{(e)}_{(U_{\ol y}/\ol y,E|_{U_{\ol y}}/a)}
\to \O_{U_{\ol y^e}}.  \label{align:U} \tag{\ref{lem:relFrob}.1}
\end{align}
Let $\iota_{\ol y}:U_{\ol y}\to X_{\ol y}$ be the open immersion,
and $\ol{E|_{U_{\ol y}}}$ be the unique extension of $E|_{U_{\ol y}}$ to $X_{\ol y}$.
Since $\L^{(e)}_{(X/Y,E/a)}$ is invertible by the assumption, the natural morphism 
$$
\L^{(e)}_{(X/Y,E/a)}|_{X_{\ol y}}
\to{\iota_{\ol y}}_* \L^{(e)}_{(U_{\ol y}/\ol y,E|_{U_{\ol y}}/a)}
\left(=:\L^{(e)}_{(X_{\ol y}/\ol y,\ol{E|_{U_{\ol y}}}/a)}\right)
$$ 
is an isomorphism.
%the unique extension of $\L^{(e)}_{(U_{\ol y}/\ol y,E|_{U_{\ol y}}/a)}$ to $X_{\ol y}$.
Hence, extending the morphism (\ref{align:U}) to $X_{\ol y}$, we obtain that 
\begin{align}
\phi^{(e)}_{(X/Y,E/a)}|_{X_{\ol y}}
\cong \phi^{(e)}_{\left(X_{\ol y}/\ol y,\ol{E|_{U_y}}/a\right)}
:{F_{X_{\ol y}/\ol y}^{(e)}}_*\L^{(e)}_{(X_{\ol y}/\ol y,\ol{E|_{U_{\ol y}}}/a)}
\to \O_{X_{\ol y^e}}. \label{align:X}\tag{\ref{lem:relFrob}.2}
\end{align}
%where $\ol{E|_{U_y}}$ is the effective AC divisor obtained as the unique extension of $E|_{U_y}$.
Then one can show (1) by an argument similar to the proof of \cite[Corollary 3.31]{PSZ13}.
We prove (2). Replacing $Y$ by $V_0$, we may assume that $V_0=V=Y$. 
Then by (\ref{align:X}), we have that 
$\phi^{(e)}_{(X/Y,E/a)}|_{X_{\ol y}}$ is surjective for each $e>0$ with $a|(p^e-1)$ and every $y\in Y$,
which implies that $\phi^{(e)}_{(X/Y,E/a)}$ is surjective for each $e>0$ with $a|(p^e-1)$.
Let $d>0$ be the minimum integer such that $a|(p^d-1)$. 
Note that we have $d|e$ for every integer $e>0$ with $a|(p^e-1)$.
Applying Keeler's relative Fujita vanishing \cite[Theorem 1.5]{Kee03} 
to the kernel of $\phi^{(d)}_{(X/Y,E/a)}$, we obtain an integer $m_0\gg0$ such that 
\begin{align}
{f_{Y^d}}_*\left(\phi^{(d)}_{(X/Y,E/a)}\otimes\left(\O_{X}(mA+N)\right)_{Y^d}\right)\label{align:d}\tag{\ref{lem:relFrob}.3}
\end{align}
is surjective for each $m\ge m_0$ and every $f$-nef Cartier divisor $N$ on $X$.
If necessary, by replacing $m_0$ by some larger integer 
we may assume that $am_0A-(aK_{X/Y}+E)$ is $f$-nef. 
We fix an integer $m\ge m_0$ and an $f$-nef divisor $N$ on $X$.
We show that $$\psi^{(de)}:={f_{Y^{de}}}_*\left(\phi^{(de)}_{(X/Y,E/a)}\otimes\left(\O_{X}(mA+N)\right)_{Y^{de}}\right)$$ 
is surjective for every integer $e>0$ by the induction on $e$. 
We have already seen that $\psi^{(d)}$ is surjective.
We assume that $\psi^{(de)}$ is surjective.
By the definition of $\phi^{(d(e+1))}_{(X/Y,E/a)}$, we have
\begin{align*} 
\psi^{(d(e+1))}
=&{f_{Y^{d(e+1)}}}_*\left(\phi^{(d(e+1))}_{(X/Y,E/a)}\otimes\left(\O_{X}(mA+N)\right)_{Y^{d(e+1)}}\right)\\
\cong& {F_Y^{d}}^*\left({f_{Y^{de}}}_*\left(\phi^{(de)}_{(X/Y,E/a)}\otimes\left(\O_{X}(mA+N)\right)_{Y^{de}}\right)\right)\\ 
&\hspace{50pt} \circ {f^{(de)}_{Y^{d(e+1)}}}_* 
\left(\phi^{(d)}_{(X^{de}/Y^{de},E^{de}/a)}\otimes\left(\L^{(de)}_{(X/Y,E/a)}(p^{de}(mA+N))\right)_{Y^{d(e+1)}}\right) \\
\cong& {F_Y^{d}}^*(\psi^{(de)}) 
\circ {f^{(de)}_{Y^{d(e+1)}}}_* 
\left(\phi^{(d)}_{(X^{de}/Y^{de},E^{de}/a)}\otimes\left(\L^{(de)}_{(X/Y,E/a)}(p^{de}(mA+N))\right)_{Y^{d(e+1)}}\right).
\end{align*}
Since $\psi^{(de)}$ is surjective, ${F_Y^d}^*(\psi^{(de)})$ is also surjective.
We need to show that the morphism 
%$${f_{Y^d}}_*( \phi^{(d)}_{(X/Y,E/a)}\otimes (\L^{(de)}_{(X/Y,E/a)}(p^{de}(mA+N)))_{Y^{d}})$$
\begin{align}
{f^{(de)}_{Y^{d(e+1)}}}_* 
\left(\phi^{(d)}_{(X^{de}/Y^{de},E^{de}/a)}\otimes
\left(\L^{(de)}_{(X/Y,E/a)}(p^{de}(mA+N))\right)_{Y^{d(e+1)}}\right)
\tag{\ref{lem:relFrob}.4}\label{align:de}
\end{align}
is surjective.
Here we recall %that $\L^{(de)}_{(X/Y,E/a)}:=\O_{X^{de}}((1-p^{de})a^{-1}(aK_{X^{de}/Y^{de}}+E))$ and 
that $f^{(de)}:X^{de}\to Y^{de}$ is nothing but $f:X\to Y$.
Let $N_{de,m}$ denote the Cartier divisor 
\begin{align*}
&(p^{de}-1)a^{-1}(amA-(aK_{X^{d}/Y^{d}}+E))+p^{de}N \\
=&(p^{de}-1)(m-m_0)A+(p^{de}-1)a^{-1}(am_0A-(aK_{X^{d}/Y^{d}}+E))+p^{de}N 
\end{align*}
on $X^{d}$. Since $am_0A-(aK_{X/Y}+E)$ is $f$-nef, $N_{de,m}$ is $f^{(d)}$-nef.
Now we can rewrite (\ref{align:de}) as 
\begin{align*}
&{f^{(de)}_{Y^{d(e+1)}}}_*
\left(\phi^{(d)}_{(X^{de}/Y^{de},E^{de}/a)}\otimes\left(
\O_{X^{de}}((1-p^{de})a^{-1}(aK_{X^{de}/Y^{de}}+E^{de})+p^{de}(mA+N))\right)_{Y^{d(e+1)}} \right)\\
=&{f_{Y^d}}_*\left(
\phi^{(d)}_{(X/Y,E/a)}\otimes \left(\O_{X}((1-p^{de})a^{-1}(aK_{X/Y}+E)+p^{de}(mA+N))\right)_{Y^{d}}\right) \\
=&{f_{Y^d}}_*\left(
\phi^{(d)}_{(X/Y,E/a)}\otimes \left(\O_{X}(mA+(p^{de}-1)(mA-a^{-1}(aK_{X/Y}+E))+p^{de}N)\right)_{Y^{d}}\right) \\
= &{f_{Y^d}}_*\left(
\phi^{(d)}_{(X/Y,E/a)}\otimes \left(\O_{X}(mA+N_{de,m})\right)_{Y^{d}}\right).
\end{align*}
%$$(p^{de}-1)a^{-1}(amA-(aK_{X^{d}/Y^{d}}+E))+p^{de}N.$$
%Then $$\L^{(de)}_{(X/Y,E/a)}(p^{de}(mA+N))\cong\O_{X^{d}}(mA+N_{de}).$$
Hence the required surjectivity follows from the surjectivity of (\ref{align:d}).
%$${f^{(de)}_{Y^{d(e+1)}}}_* \left(\phi^{(d)}_{(X^{de}/Y^{de},E^{de}/a)}\otimes( \O_{X^{de}}(mA+N_{de}))_{Y^{d(e+1)}} \right)$$ Hence we obtain that $\psi^{(d(e+1))}$ is surjective.
\end{proof}
\section{Weak positivity over $S$ and a numerical invariant}\label{section:inv}
In this section, we define an invariant of coherent sheaves on normal varieties of positive characteristic which measures positivity. This will play an important role in the proof of the main theorem. \par 
We first recall some definitions of the positivity of coherent sheaves on normal varieties over an algebraically closed field $k$ of arbitrary characteristic. 
\begin{defn} \label{defn:positivity} %\samepage 
Let $Y$ be a quasi-projective normal variety over a field $k$, 
let $\G$ be a coherent sheaf on $Y$, and let $H$ be an ample Cartier divisor. 
Let $V\subseteq Y$ be the largest open subset such that 
$\G|_{V}$ is locally free, and let $S$ be a non-empty subset of $V$. 
\begin{itemize} 
\item[$($i$)$] 
$\G$ is said to be \textit{globally generated over $S$} if the natural morphism $H^0(Y,\G)\otimes_k\O_Y\to\G$ is surjective over $S$. 
\item[$($ii$)$] 
$\G$ is said to be \textit{weakly positive over $S$} if for every integer $a>0$, 
there exists an integer $b>0$ such that $(S^{ab}\G)^{**}\otimes\O_Y(b H)$ is globally generated over $S$. 
Here $S^{ab}(\underline{~~})$ and $(\underline{~~})^{**}$ denote the $ab$-th symmetric product and the double dual, respectively. 
\item[$($iii$)$] 
$\G$ is said to be \textit{big over $S$} if there exists an integer $a>0$ such that $(S^a\G)(-H)$ is weakly positive over $S$. 
\end{itemize}
We simply say that $\G$ is globally generated (resp. weakly positive, big) over $y$ 
when $S=\{y\}$ for a point $y\in V$. 
$\G$ is said to be \textit{generically globally generated} 
if $\G$ is globally generated over the generic point $\eta$ of $Y$.
$\G$ is said to be \textit{weakly positive} (resp. \textit{big}) 
if it is weakly positive (resp. big) over $\eta$.
\end{defn}
\begin{rem} 
The notion of weak positivity is first introduced by Viehweg as a generalization of nefness of vector bundles, 
when $S$ is an open subset \cite{Vie95}. In \cite{Kol87}, \cite{Pat13} and \cite{Eji15}, (resp. \cite{Nak04}), 
this notion is also defined in the case when $S=\{\eta\}$ (resp. $S=\{y\}$ for a point $y\in Y$). 
\end{rem}
\begin{rem}\label{rem:wp} 
Let $Y,\G,S$ and $H$ be as above. \\
%
%(1) Definition \ref{defn:positivity} is independent of the choice of $H$ 
(1) The above definition is independent of the choice of $H$ 
(cf. \textup{\cite[Lemma 2.14]{Vie95}}).  \\ 
(2) Let $Y_0\subseteq Y$ be an open subset containing $S$ such that $\codim_Y(Y\setminus Y_0)\ge2$ 
and let $i:Y_0\to Y$ be the open immersion. 
Then we have
$$(S^{m}\G)^{**}(nH)\cong {i}_*\left(((S^{m}\G)^{**}(nH))|_{Y_0}\right)
\cong {i}_*\left((S^{m}(\G|_{Y_0}))^{**}(nH|_{Y_0})\right)$$
for each integers $m,n$ with $m>0$.
Therefore, we see that $\G$ is weakly positive (resp. big) over $S$ 
if and only if so is $\G|_{Y_0}$. \\
(3) 
%Set $\G':=\G^{**}$ or $\G':=\G/\mathcal T$, where $\mathcal T\subseteq\G$ be the subsheaf of torsion elements of $\G$.  Then 
The natural morphism $\G\to\G^{**}$ induces the morphism 
$(S^{m}\G)^{**}(nH)\to(S^{m}\G^{**})^{**}(nH)$ for each integers $m,n$ with $n>0$, 
which is an isomorphism because of the normality of $Y$. 
In particular, $\G$ is weakly positive (resp. big) over $S$ if and only if so is $\G^{**}$. \\
(4) Assume that $\G$ is a vector bundle and that $Y$ is a smooth projective curve. 
Then $\G$ is weakly positive (resp. big) over $Y$ if and only if $\G$ is nef (resp. ample).  \\
(5) Assume that $\G$ is a line bundle and that $Y$ is projective. 
Then $\G$ is weakly positive (resp. big) over the generic point $\eta$ of $Y$ if and only if $\G$ is pseudo-effective (resp. big). 
\end{rem}
\begin{obs}\label{obs:wp}
(1) With the notation as Definition \ref{defn:positivity}, we define
\begin{align*}
T'_S(\G,H)&:=\begin{array}{cc|ccl}
\ldelim\{{3}{0.1pt}&& \textup{there exist $a,b\in\Z$ such that}& \rdelim\}{3}{1pt} \cr
&\e\in\Q&\textup{ $\e=a/b$, $b>0$, and $(S^{b}\G)^{**}(-aH)$ is}& \cr
&                 &\textup{globally generated over $S$.}& 
\end{array}~, \textup{ and} \\
t'_S(\G,H)&:=\sup T'_S(\G,H).
\end{align*}
We first prove that $T'_S(\G,H)$ is equal to $\Q\cap (-\infty,t'_S(\G,H))$ or $\Q\cap (-\infty,t'_S(\G,H)]$.
By Remark \ref{rem:wp} (3), we have $T'_S(\G,H)=T'_S(\G^{**},H)$.
Furthermore, similarly to Remark \ref{rem:wp} (2), we see that $T'_S(\G,H)=T'_S(\G|_V,H|_V)$, 
where $V\subseteq Y$ is the maximum open subset such that $\G|_V$ is locally free.
Hence we may assume that $\G$ is locally free.
If $(S^{b}\G)(-aH)$ is globally generated over $S$ for integers $a,b$ with $b>0$, 
then $(S^{bc}\G)(-acH)$ is also globally generated over $S$ for every $c>0$,
because of the natural morphism $$S^c((S^b\G)(-aH))\to (S^{bc}\G)(-acH)$$ which is surjective over $S$.
Then $(S^{bc}\G)((-ac+d)H)$ is also globally generated over $S$ for every $d>0$ such that $dH$ is free.
Hence we see that $(ac-d)/(bc)=a/b-d/(bc)\in T'_S(\G,H)$, which proves our claim. \\
(2) Next, we show that $\G$ is weakly positive (resp. big) over $S$ 
if and only if $t'_S(\G,H)\ge0$ (resp $>0$).
By an argument similar to the above, we may assume that $\G$ is locally free.
The definition of the weak positivity of $\G$ over $S$ is equivalent to that 
$-1/a\in T'_S(\G,H)$ for all $a>0$, which is also equivalent to $t'_S(\G,H)\ge 0$ because of (1). 
If $\G$ is big, then there exists an integer $c>0$ such that $(S^c\G)(-H)$ is weakly positive. 
%Note that $S^c\G$ is weakly positive if and only if so is $(S^c\G)^{**}$ as shown in Remark \ref{rem:wp} (3).
Then for an $a>0$ there exists a $b\gg0$ such that $(S^{ab}(S^c\G))(-abH+aH)$ is globally generated over $S$, 
and so is $(S^{abc}\G)(a(1-b)H)$ as the argument in (1). 
Hence $t'_S(\G,H)\ge (ab-a)/(abc)=1/c-1/(bc)>0$. 
Conversely, if $t'_S(\G,H)>0$ then by (1) we have integers $a,b>0$ such that 
$(S^{b}\G)(-aH)$ is globally generated over $S$,
and hence $\G$ is big. \\
(3) For a line bundle $\L$ on $Y$ and an integer $m>0$, it is easily seen that $t'_S(\L^m,H)=mt'_S(\L,H)$. 
Hence we see that $\L$ is weakly positive over $S$ if and only if so is $\L^m$.
\end{obs}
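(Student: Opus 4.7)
The plan is to prove the three parts by first reducing to the locally free case and then playing with symmetric products and ample twists. For part (1), Remarks \ref{rem:wp} (2) and (3) imply $T'_S(\G,H)=T'_S(\G^{**},H)=T'_S(\G|_V,H|_V)$, where $V\subseteq Y$ is the largest open set on which $\G$ is locally free (this uses $S\subseteq V$). So the argument may be carried out assuming $\G$ locally free. Given $a/b\in T'_S(\G,H)$, the natural surjection $S^c((S^b\G)(-aH))\to (S^{bc}\G)(-acH)$ shows $a/b=(ac)/(bc)\in T'_S$ for every $c>0$. Twisting by $dH$ with $dH$ globally generated yields that $(S^{bc}\G)((-ac+d)H)$ is globally generated over $S$, so $a/b-d/(bc)\in T'_S$. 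Letting $c,d$ vary shows $T'_S$ is downward closed in $\Q$, hence equals $\Q\cap(-\infty,t'_S)$ or $\Q\cap(-\infty,t'_S]$ according to whether the supremum is attained.

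For part (2), unwinding Definition \ref{defn:positivity} (ii), weak positivity of $\G$ over $S$ is precisely the statement that $-1/a\in T'_S(\G,H)$ for every integer $a>0$, which by part (1) is equivalent to $t'_S(\G,H)\ge 0$. For the bigness equivalence, suppose $\G$ is big, so there exists $c>0$ with $(S^c\G)(-H)$ weakly positive over $S$. Then for every $a>0$ one can find $b>0$ so that $(S^{ab}(S^c\G))^{**}(-abH+aH)$ is globally generated over $S$; composing with the natural surjection $S^{ab}(S^c\G)\to S^{abc}\G$ shows $(S^{abc}\G)^{**}(a(1-b)H)$ is globally generated over $S$, giving $1/c-1/(bc)\in T'_S$. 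As $b\to\infty$ this forces $t'_S(\G,H)\ge 1/c>0$. Conversely, $t'_S(\G,H)>0$ provides integers $a,b>0$ with $(S^b\G)^{**}(-aH)$ globally generated over $S$, which is a stronger condition than requiring $(S^b\G)(-aH)$ to be weakly positive, yielding bigness.

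For part (3), since $\L$ is a line bundle the reflexive hulls of symmetric powers coincide with ordinary powers: $(S^b\L^m)^{**}=\L^{bm}=(S^{bm}\L)^{**}$. Hence $a/b\in T'_S(\L^m,H)$ iff $\L^{bm}(-aH)$ is globally generated over $S$ iff $a/(bm)\in T'_S(\L,H)$. Thus $T'_S(\L^m,H)=m\cdot T'_S(\L,H)$ as subsets of $\Q$, so taking suprema gives $t'_S(\L^m,H)=mt'_S(\L,H)$. Combined with part (2), $\L$ is weakly positive over $S$ iff $t'_S(\L,H)\ge 0$ iff $t'_S(\L^m,H)\ge 0$ iff $\L^m$ is weakly positive over $S$.

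The main technical nuisance is justifying the manipulation of symmetric products in the presence of the double-dual $(\,\cdot\,)^{**}$: a priori, taking $S^c$ of a reflexive sheaf and then double-dualizing need not be transparent. This is handled cleanly by first restricting to the locally free locus $V$ via Remark \ref{rem:wp} (2) and (3), where symmetric products behave as expected and the natural surjections $S^c(S^b\G)\to S^{bc}\G$ are honest. Once the argument is set up on $V$, the rest is elementary arithmetic with the exponents of $H$.
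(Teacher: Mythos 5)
Your proposal is correct and follows essentially the same route as the paper: reduce to the locally free case via Remark \ref{rem:wp} (2) and (3), use the surjections $S^c(S^b\G)\to S^{bc}\G$ together with twists by multiples of $H$ to show $T'_S$ is downward closed, and then translate weak positivity and bigness into the conditions $t'_S\ge 0$ and $t'_S>0$ exactly as in the paper. The identity $t'_S(\L^m,H)=mt'_S(\L,H)$ is likewise the paper's (unstated) computation, so no substantive difference remains.
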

By Observation \ref{obs:wp} (3), we can define the weak positivity of a $\Q$-Cartier divisor.
\begin{defn} \label{defn:wp_div}
With the notation as Definition \ref{defn:positivity}, let $D$ be a $\Q$-Cartier divisor on $Y$ and $m>0$ be an integer such that $mD$ is Cartier.
Then $D$ is said to be weakly positive (resp. big) over $S$ if so is $\O_X(mD)$.
\end{defn}
Note that this definition is independent of the choice of $m$ by Observation \ref{obs:wp} (3). \par
In the rest of this section, we suppose that the characteristic of the base field $k$ is positive.
\begin{defn} \label{defn:inv} %\samepage 
Let $Y,\G,S$ be as in Definition \ref{defn:positivity}, and assume that the characteristic of $k$ is $p>0$. 
Let $D$ be a $\Q$-Cartier divisor. Then we define  
\begin{align*} 
T_S(\G,D)&:= \begin{array}{cc|ccl}
\ldelim\{{3}{0.1pt}&& \textup{there exists an $e>0$ such that}& \rdelim\}{3}{1pt} \cr
&\e\in\Q&\textup{ $p^e\e D$ is Cartier and $({F_Y^e}^*\G)(-p^e\e D)$ is}& \cr
&                 &\textup{globally generated over $S$.}& 
\end{array}~,\textup{ and}\\
t_S(\G,D)&:=\sup T_S(\G,D)\in \R\cup\{-\infty,+\infty\}.  
\end{align*} 
\end{defn}
\begin{lem} \label{lem:ts} %\samepage 
Under the same assumption as the above, let $\F$ be a coherent sheaf on $Y$.  
\begin{itemize} 
\item[$(1)$] If there exists a morphism $\F\to\G$ such that it is surjective over $S$, 
then $t_S(\F,D)\le t_S(\G,D)$.  
\item[$(2)$] Assume that $\{t_S(\F,D),t_S(\G,D)\}\ne\{-\infty,+\infty\}$. 
Then $$t_S(\F,D)+t_S(\G,D) \le t_S(\F\otimes\G,D).$$  
\item[$(3)$] For each $e>0$, $t_S({F_Y^e}^*\G,D)=p^et_S(\G,D)$. 
\item[$(4)$] If the rank of $\G$ is positive, and if $t_S(\G,D)=+\infty$, 
then $-D$ is weakly positive over $S$. 
\end{itemize} 
\end{lem}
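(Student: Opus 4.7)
Parts $(1)$ and $(3)$ are essentially formal. For $(1)$, suppose $\alpha\colon\F\to\G$ is surjective over $S$ and $\e\in T_S(\F,D)$ is witnessed by some $e>0$. Pulling $\alpha$ back by $F_Y^e$ and tensoring with $\O_Y(-p^e\e D)$ preserves surjectivity over $S$ (Frobenius pullback is right exact and sends trivial bundles to trivial bundles). Composing with the given global generation of $({F_Y^e}^*\F)(-p^e\e D)$ yields global generation over $S$ of $({F_Y^e}^*\G)(-p^e\e D)$, whence $\e\in T_S(\G,D)$, and the inequality on suprema follows. For $(3)$, use ${F_Y^{e'}}^*{F_Y^e}^*\G = F_Y^{(e+e')*}\G$ and $p^{e'}\e = p^{e+e'}(\e/p^e)$ to identify $T_S({F_Y^e}^*\G,D)=p^e\cdot T_S(\G,D)$, inflating the witness when necessary (again via Frobenius pullback, which preserves global generation over $S$).

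For $(2)$, pick witnesses $e_1,e_2$ and inflate to the common $e:=\max(e_1,e_2)$ as above. We obtain surjections over $S$ from trivial bundles $V_1\otimes_k\O_Y$ and $V_2\otimes_k\O_Y$ onto ${F_Y^e}^*\F(-p^e\e_1 D)$ and ${F_Y^e}^*\G(-p^e\e_2 D)$ respectively. Their tensor product is a surjection over $S$ from $(V_1\otimes_k V_2)\otimes_k\O_Y$ onto ${F_Y^e}^*(\F\otimes\G)(-p^e(\e_1+\e_2)D)$, factoring through the evaluation from $H^0$; thus $\e_1+\e_2\in T_S(\F\otimes\G,D)$, and taking suprema, for which well-definedness is guaranteed by the hypothesis on $\pm\infty$, gives the claim.

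The main content is $(4)$. By Remark \ref{rem:wp} (2), (3) I may replace $\G$ by $\G^{**}$ and thus assume $\G$ is reflexive with locally free locus $V\supseteq S$ satisfying $\codim_Y(Y\setminus V)\ge 2$. Let $r:=\mathrm{rk}(\G)\ge 1$, fix an ample Cartier divisor $A$ on $Y$ and a positive integer $m$ with $mD$ Cartier, and use Serre vanishing to pick $N\gg 0$ with $(\det\G)^{[-1]}(NA)$ globally generated. For each positive integer $\e_0$, the assumption $t_S(\G,D)=+\infty$ furnishes an $e>0$ such that ${F_Y^e}^*\G(-p^e\e_0 mD)$ is globally generated over $S$. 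Tensoring $r$ copies and composing with the wedge map $({F_Y^e}^*\G)^{\otimes r}|_V\twoheadrightarrow {F_Y^e}^*\det\G|_V$ shows ${F_Y^e}^*\det\G(-rp^e\e_0 mD)|_V$ is globally generated over $S$. On the other hand, ${F_Y^e}^*((\det\G)^{[-1]}(NA))$ is globally generated on $Y$ and restricts on $V$ to $(\det\G|_V)^{-p^e}(Np^eA|_V)$. For each $y\in S$, multiplying a section of each sheaf nonvanishing at $y$ and extending the product from $V$ to $Y$ via $\codim_Y(Y\setminus V)\ge 2$ yields a section of the line bundle $\O_Y(-rp^e\e_0 mD + Np^eA)$ nonvanishing at $y$. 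Hence this line bundle is globally generated over $S$, which exhibits $-N/(r\e_0)\in T'_S(\O_Y(-mD),A)$. Letting $\e_0\to\infty$ gives $t'_S(\O_Y(-mD),A)\ge 0$, and by Observation \ref{obs:wp} (2), $-D$ is weakly positive over $S$.

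The main obstacle is the bookkeeping in $(4)$: the ``ample shift'' used to cancel the determinant after Frobenius pullback must be controlled so that it is dominated by the $rp^e\e_0 mD$ term. This is handled by choosing $(\det\G)^{[-1]}(NA)$ globally generated once at the outset and pulling back by $F_Y^e$, which merely multiplies the shift by $p^e$; the ratio in the resulting line bundle equals $r\e_0/N$ and can be driven to $+\infty$ by varying $\e_0$ alone. A secondary issue is that on a normal but possibly singular $Y$, Frobenius pullback and determinant do not commute with reflexive hull, which is why the argument is first carried out on $V$ and then extended across the codimension-two complement.
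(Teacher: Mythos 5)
Your parts (1)--(3) are exactly the routine verifications that the paper dismisses with ``follow directly from the definition,'' and your witness-inflation bookkeeping for them is correct. For (4) your argument is sound in outline but takes a genuinely different route from the paper's: you eliminate $\G$ via the determinant map $({F_Y^e}^*\G)^{\otimes r}\to{F_Y^e}^*\det\G$ on the locally free locus, cancel $\det\G$ against a globally generated twist $(\det\G)^{-1}(NA)$, and extend sections across a codimension-two set, whereas the paper avoids determinants, reflexive hulls and codimension-two extensions altogether: from $\G^*\otimes\G\to\O_Y$ (surjective over $S$ because $\G$ is locally free of positive rank along $S$) and a surjection $\O_Y^{\oplus h}\to\G^*(H)$ it builds a morphism $\G^{\oplus h}\to\O_Y(H)$ surjective over $S$, so that part (1) immediately gives $t_S(\O_Y(H),D)=+\infty$, and the remaining line-bundle computation with $t'_S$ is the same one you perform; your route works but costs more machinery. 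Two small repairs are needed in your write-up. First, $t_S(\G,D)=+\infty$ does \emph{not} furnish, for every positive integer $\e_0$, a witness for the specific value $\e_0m\in T_S(\G,D)$: the set $T_S(\G,D)$ is not downward closed (lowering $\e$ would require twisting by a positive multiple of $D$, which carries no positivity), so the hypothesis only provides a sequence of rationals $\e_n\in T_S(\G,D)$ with $\e_n\to+\infty$. Running your computation with these $\e_n$ instead, and raising the resulting line bundle to a suitable power to clear denominators (exactly as the paper does with its integers $l_n$), the final ratio becomes $-mN/(r\e_n)\to 0$, which still yields $t'_S(\O_Y(-mD),A)\ge0$ and hence the weak positivity of $-D$ over $S$ by Observation \ref{obs:wp}. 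Second, the replacement of $\G$ by $\G^{**}$ is justified not by Remark \ref{rem:wp} (2)--(3), which compares weak positivity of $\G$ and $\G^{**}$, but by your own part (1) applied to the natural map $\G\to\G^{**}$, which is an isomorphism, hence surjective, over $S$ since $S$ lies in the locally free locus of $\G$. With these adjustments your proof of (4) is complete.
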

\begin{proof} 
(1)--(3) follow directly from the definition. 
We prove (4). Recall that for every $y\in S$, the stalk $\G_y$ of $\G$ at $y$ is free of positive rank. 
From this we see that the natural morphism $\G^*\otimes\G\to\O_Y$ is surjective over $S$.
Furthermore, there is an ample Cartier divisor $H$ such that $\G^*(H)$ is globally generated, 
and thus we have a surjective morphism $\O_Y^{\oplus h}\to\G^*(H)$ for some $h>0$. 
Hence we get a morphism $$\G^{\oplus h}\to\G^*\otimes\G(H)\to\O_Y(H)$$ which is surjective over $S$. 
By (1) we have $t_S(\O_Y(H),D)=+\infty$, and thus there exists a sequence $\{\e_n>0\}_{n\ge 1}$ of elements of $T_S(\O_Y(H),D)$ 
such that $\e_n\xrightarrow{n\to+\infty}+\infty$. 
By the definition of $t_S(\O_Y(H),D)$, for every $n\ge1$ there exists $e\ge1$ 
$$({F^e_Y}^*\O_Y(H))(-\e_np^eD)\cong \O_Y(p^e(H-\e_nD)) $$ is globally generated over $S$. 
Set $\G:=\O_Y(-bD)$ for an integer $b>0$ such that $bD$ is Cartier. 
Then for an integer $l_n>0$ such that $\e_nl_np^e$ is an integer, 
$$(S^{\e_nl_np^e}\G)^{**}\otimes\O_Y(bl_np^eH)\cong \O_Y(\e_nl_np^e(-bD)+bl_np^eH)\cong S^{bl_n}(\O_Y(p^e(H-\e_nD)))$$ 
is also globally generated over $S$. Using the notation of Observation \ref{obs:wp}, we see that 
$(-bl_np^e)/(\e_nl_np^e)=-b/\e_n\le t'_S(\O_Y(-bD),H)$, and so $0\le t'_S(\O_Y(-bD),H)$. 
As shown in Observation \ref{obs:wp} (2), this implies that $\O_Y(-bD)$ is weakly positive over $S$. 
\end{proof}
\section{Main theorem}\label{section:thmp}
The purpose of this section is to prove Theorems \ref{thm:thmp} and \ref{thm:thmp2}. 
First we prepare three lemmas. The following lemma is used in the proof of Theorem \ref{thm:thmp}.
\begin{lem}\label{lem:locfree}
Let $k$ be a field.
Let $f:X\to Y$ be a surjective projective morphism from a $k$-scheme $X$ to a variety $Y$, 
let $A$ be an $f$-ample Cartier divisor on $X$, and let $\F$ be a coherent sheaf on $X$. 
Then there exists an integer $m_0>0$ such that for each integer $m\ge m_0$ 
and for every nef Cartier divisor $N$ on $X$,
$f_*(\F(mA+N))$ is locally free over $V$, 
where $V\subseteq Y$ is an open subset such that $\F_V$ is flat over $V$.
\end{lem}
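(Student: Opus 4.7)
The plan is to combine Keeler's relative Fujita vanishing (which gives vanishing of higher direct images uniformly in the nef divisor $N$) with cohomology and base change on $V$ (which upgrades this vanishing into local freeness).

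First, I would apply Keeler's relative Fujita vanishing \cite[Theorem 1.5]{Kee03} (already used in Lemma \ref{lem:relFrob}) to the projective morphism $f:X\to Y$, the $f$-ample Cartier divisor $A$, and the coherent sheaf $\F$. This yields an integer $m_0>0$ such that for every integer $m\ge m_0$ and every $f$-nef Cartier divisor $N'$ on $X$,
\[
R^i f_*\bigl(\F(mA+N')\bigr)=0\qquad\text{for all } i\ge 1.
\]
Since a nef Cartier divisor $N$ on $X$ is in particular $f$-nef, this vanishing applies to every $N$ appearing in the statement. The crucial feature of Keeler's theorem is that $m_0$ does not depend on $N'$, which is exactly what is required.

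Next, I would restrict to $V$. Flat base change along the open immersion $V\hookrightarrow Y$ gives
\[
R^i f_*\bigl(\F(mA+N)\bigr)\bigm|_V\;\cong\;R^i(f_V)_*\bigl(\F(mA+N)\bigm|_{X_V}\bigr)=0\qquad\text{for } i\ge 1.
\]
Moreover, since $\F_V$ is flat over $V$ by hypothesis and tensoring by an invertible sheaf preserves flatness, the sheaf $\G:=\F(mA+N)\bigm|_{X_V}$ is also flat over $V$.

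Finally, I would invoke cohomology and base change. For the proper morphism $f_V:X_V\to V$, the $V$-flat coherent sheaf $\G$, and the vanishing $R^i(f_V)_*\G=0$ for $i\ge 1$, a standard argument (representing $R(f_V)_*\G$ locally by a finite complex of locally free sheaves and using the acyclicity of the higher terms) shows that $(f_V)_*\G$ is locally free on $V$. Equivalently, the base-change map $(f_V)_*\G\otimes k(y)\to H^0(X_y,\G_y)$ is an isomorphism for every $y\in V$, and $h^0(X_y,\G_y)=\chi(X_y,\G_y)$ is locally constant by the standard constancy of Euler characteristics in proper flat families, whence local freeness. Honestly there is no serious obstacle: the only substantive input is the uniformity of $m_0$ in the nef divisor $N$, and this is precisely the content of Keeler's theorem; everything else is formal.
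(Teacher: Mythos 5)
Your proposal is correct and follows essentially the same route as the paper: Keeler's relative Fujita vanishing supplies the uniform $m_0$ and the vanishing $R^if_*(\F(mA+N))=0$ for $i>0$, and the rest is cohomology and base change for the $V$-flat sheaf $\F(mA+N)|_{X_V}$. The only difference is packaging of that last step: you split the finite complex of locally free modules directly (noting the equivalent Euler-characteristic/Grauert phrasing), whereas the paper runs a descending induction on the fiberwise $h^i$ via \cite[Theorem III 12.11]{Har77} and then uses constancy of $\chi$ together with \cite[Corollary III 12.9]{Har77}; these are interchangeable forms of the same standard argument.
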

\begin{proof}
By Keeler's relative Fujita vanishing (\cite[Theorem 1.5]{Kee03}), 
there exists an integer $m_0>0$ such that 
$$Rf_*^i\F(mA+N)=0$$
for each $m\ge m_0$, for every nef Cartier divisor $N$ on $X$ and for each $i>0$. 
Fix an integer $m\ge m_0$ and a nef Cartier divisor $N$.
For each $i\ge0$, we define the function $h^i$ on $V$ 
by $h^i(y):=\dim_{k(y)} H^0(X_y,\F(mA+N)|_{X_y})$.
Since $\dim X_y\le\dim X$ for every $y\in V$, 
we see that $h^i=0$ for each $i>\dim X$.
Let $i\ge2$ be an integer such that $h^i=0$. 
By applying \cite[Theorem I\hspace{-1pt}I\hspace{-1pt}I 12.11]{Har77} to 
the morphism $f_{V}:X_{V}\to V$ and $\F(mA+N)_{V}$, we obtain that $h^{i-1}=0$. 
From this, we see that $h^i=0$ for each $i\ge1$, 
and hence $\chi(\F(mA+N)|_{X_y})=h^0(y)$ for every $y\in V$.
Since the left hand side is constant on $V$ by 
\cite[Theorem I\hspace{-1pt}I\hspace{-1pt}I 9.9]{Har77} and its proof, 
$h^0$ is also constant on $V$. 
Applying \cite[Corollary I\hspace{-1pt}I\hspace{-1pt}I 12.9]{Har77}, 
we obtain that $f_*\F(mA+N)$ is locally free over $V$, which is our claim.
\end{proof}
\begin{lem}\label{lem:glgen} %\samepage 
With the notation as in Lemma \ref{lem:locfree}, 
assume that $X$ and $Y$ are projective and $A$ is ample. 
Then there exists an integer $m_0>0$ such that for each integer $m\ge m_0$ 
and for every nef Cartier divisor $N$ on $X$, 
$f_*(\F(mA+N))$ is globally generated.
\end{lem}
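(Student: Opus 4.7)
The plan is to verify Castelnuovo--Mumford $0$-regularity of $f_*(\F(mA+N))$ on $Y$ with respect to a fixed very ample divisor, and then conclude global generation by Mumford's regularity theorem. We may assume $\dim Y\ge 1$, since otherwise $Y$ is a point and the statement is trivial. Fix a very ample Cartier divisor $H$ on $Y$; it then suffices to show that
\[
H^i\bigl(Y,\, f_*(\F(mA+N))\otimes\O_Y(-iH)\bigr) = 0 \qquad \text{for every } 1\le i\le \dim Y,
\]
uniformly for $m\ge m_0$ and every nef Cartier divisor $N$ on $X$.

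First I would collect two vanishing inputs. By Keeler's relative Fujita vanishing \cite[Theorem 1.5]{Kee03} applied to $f$ with $A$ as an $f$-ample divisor, together with its absolute counterpart on the projective variety $X$ with $A$ as the ample divisor, there exists an integer $m_1>0$ such that for every $m'\ge m_1$ and every nef Cartier divisor $N'$ on $X$, both
\[
R^jf_*\F(m'A+N')=0 \quad\text{and}\quad H^j(X,\F(m'A+N'))=0 \qquad (j>0).
\]
Combined via the Leray spectral sequence, these yield $H^j(Y, f_*(\F(m'A+N')))=0$ for every $j>0$.

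Next I would absorb the negative twist $-if^*H$ into a nef term. Because $A$ is ample on $X$ and $f^*H$ is nef, there exists an integer $c\ge 0$ such that $cA - if^*H$ is nef for every $1\le i\le \dim Y$. Set $m_0:=m_1+c$. For $m\ge m_0$, any nef $N$ on $X$, and any $1\le i\le \dim Y$, decompose
\[
mA+N-if^*H \;=\; (m-c)A + \bigl(N+cA-if^*H\bigr),
\]
where the parenthesized divisor is nef and $m-c\ge m_1$. The vanishings from the previous paragraph therefore apply with $m'=m-c$ and $N'=N+cA-if^*H$, giving $H^i(Y, f_*\F(mA+N-if^*H))=0$. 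Since $-if^*H$ is pulled back from $Y$, the projection formula identifies $f_*(\F(mA+N-if^*H))$ with $f_*(\F(mA+N))\otimes\O_Y(-iH)$, which is exactly the required $0$-regularity.

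The main point to watch is the uniformity of $m_0$: a single bound must handle all $m\ge m_0$ and all nef $N$ simultaneously. This uniformity is precisely what Keeler's relative Fujita vanishing (and its absolute version) supply. The auxiliary constant $c$ only serves to convert the troublesome term $-if^*H$ into a nef summand by borrowing $c$ copies of $A$ from the ample part, which is possible because $A$ lies in the interior of the ample cone of $X$.
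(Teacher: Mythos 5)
Your proposal is correct and follows essentially the same route as the paper's proof: Fujita vanishing together with Keeler's relative Fujita vanishing gives uniform vanishing of $H^j(X,\cdot)$ and $R^jf_*(\cdot)$, the Leray spectral sequence and projection formula transfer this to $H^i(Y, f_*(\F(mA+N))(-iH))=0$ after absorbing $-if^*H$ into a nef summand by borrowing a fixed multiple of $A$, and Castelnuovo--Mumford regularity then yields global generation. The only differences are cosmetic (your constant $c$ with $cA-if^*H$ nef versus the paper's $m_1$ with $m_1A-\dim Y\,f^*H$ nef, and very ample versus ample and free $H$), so nothing further is needed.
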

\begin{proof} 
Let $H$ be an ample and free Cartier divisor on $Y$, 
and let $m_1>0$ be an integer such that $m_1A-\dim Yf^*H$ is nef. 
By the Fujita vanishing (\cite[Theorem (1)]{Fuj82}, \cite[Section 5]{Fuj83}) and 
Keeler's relative Fujita vanishing (\cite[Theorem 1.5]{Kee03}), there exists an integer $m_2>0$ such that 
$$H^i(X,\F(mA+N))=0 ~\textup{ and } ~R^if_*(\F(mA+N))=0$$ for each $m\ge m_2$, 
for every nef Cartier divisor $N$ on $X$ and for each $i>0$. 
Since $R^if_*(\F(mA+N))=0$, by the Leray spectral sequence, we have 
\begin{align*}
H^i(Y,(f_*(\F((m+m_1)A+N)))(-iH))
&\cong H^i(Y,f_*(\F((m+m_1)A-if^*H+N))) \\
&\cong H^i(X,\F(mA+m_1A-if^*H+N))
\end{align*}
for each $m\ge m_0$ and for each $i>0$.  
Since $m_1A-if^*H+N$ is nef for $0<i\le\dim Y$, the right term vanishes.
This implies that $f_*(\F((m+m_1)A+N))$ is $0$-regular with respect to $H$, 
and hence it is globally generated by the Castelnuovo-Mumford regularity \cite[Theorem 1.8.5]{Laz04}. 
Defining $m_0:=m_1+m_2$, we obtain the assertion.
\end{proof}
The next lemma is a consequence of the relative Castelnuovo-Mumford regularity \cite[Example 1.8.24]{Laz04},
and is used in the proof of Theorems \ref{thm:thmp} and \ref{thm:thmp2}.
\def\N{{\mathcal N}}
\begin{lem} \label{lem:mult}
Let $k$, $f$, $X$, $Y$ and $\F$ be as in Lemma \ref{lem:locfree}. 
Let $W\subseteq Y$ be an open subset. Let $L$ be a Cartier divisor on $X$.
\begin{itemize}
\item[$(1)$] 
If $L_W$ is $f_W$-free, then there exists an integer $n_0>0$ such that 
for each $n\ge n_0$ and each $m>0$, the natural morphism 
$$f_*\O_X(mL)\otimes f_*(\F(nL))\to f_*(\F((m+n)L))$$ 
is surjective over $W$.
\item[$(2)$] 
If $L_W$ is $f_W$-ample and $f_W$-free, then there exists an integer $n_0>0$ 
such that for each $n\ge n_0$, for each $m>0$ and 
for every Cartier divisor $N$ on $X$ whose restriction $N_W$ to $X_W$ is $f_W$-nef, 
the natural morphism 
$$f_*\O_X(mL)\otimes f_*(\F(nL+N))\to f_*(\F((n+m)L+N))$$ 
is surjective over $W$.
\end{itemize}
\end{lem}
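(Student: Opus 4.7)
The plan is to deduce both parts from the relative Castelnuovo-Mumford regularity (\cite[Example 1.8.24]{Laz04}) together with Keeler's relative Fujita vanishing (\cite[Theorem 1.5]{Kee03}), the latter already invoked in the proof of Lemma \ref{lem:relFrob}. Restricting to $W$, we may assume $Y=W$, and by Lemma \ref{lem:locfree} we may further assume $f_*\O_X(L)$ is locally free on $Y$.

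For part (2), Keeler's relative Fujita vanishing applied to $\F$ and the $f$-ample line bundle $\O_X(L)$ produces $n_1>0$ with $R^if_*(\F(n_1L+N'))=0$ for every $i>0$ and every $f$-nef Cartier divisor $N'$. Setting $n_0:=n_1+\dim X-\dim Y$ and taking $N':=(n-n_1-i)L+N$ (which is $f$-nef for $n\ge n_0$, $1\le i\le\dim X-\dim Y$ and $N$ any $f$-nef Cartier divisor on $X$), we obtain
\[
R^if_*\bigl(\F((n-i)L+N)\bigr)=0\quad\text{for all }i>0\text{ and }n\ge n_0,
\]
uniformly in $N$. This is precisely the statement that $\F(nL+N)$ is $0$-regular in the relative Castelnuovo-Mumford sense with respect to the $f$-ample and $f$-free line bundle $L$. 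The regularity statement then furnishes the surjection
\[
f_*\O_X(mL)\otimes f_*\F(nL+N)\twoheadrightarrow f_*\F((n+m)L+N)
\]
uniformly in $m\ge 0$, $n\ge n_0$ and $f$-nef $N$, yielding part (2).

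For part (1), where $L_W$ is only $f_W$-free, we reduce to part (2) via a projective-bundle factorization. The evaluation $f^*f_*\O_X(L)\twoheadrightarrow\O_X(L)$ induces a factorization $f=\pi\circ g$ with $g:X\to\mathbb{P}:=\mathbb{P}_Y(f_*\O_X(L))$ a $Y$-morphism satisfying $\O_X(L)\cong g^*\O_{\mathbb{P}}(1)$, and $\pi:\mathbb{P}\to Y$ the projective-bundle structure; the tautological line bundle $H:=\O_{\mathbb{P}}(1)$ is $\pi$-very ample and $\pi$-free. By the projection formula along $g$, the multiplication map in the lemma rewrites as
\[
\pi_*((g_*\O_X)(mH))\otimes\pi_*((g_*\F)(nH))\to\pi_*((g_*\O_X\otimes g_*\F)((n+m)H))\to\pi_*((g_*\F)((n+m)H)),
\]
where the second arrow is $\pi_*$ of the surjective module multiplication $g_*\O_X\otimes g_*\F\twoheadrightarrow g_*\F$ tensored with $H^{n+m}$. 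Applying relative Castelnuovo-Mumford regularity on $\pi$ (with the $\pi$-ample and $\pi$-free $H$) to both $g_*\F$ and $g_*\O_X$ gives surjectivity of the first arrow for $n,m$ large, while Keeler's vanishing applied to the kernel $K$ of $g_*\O_X\otimes g_*\F\twoheadrightarrow g_*\F$ on $\mathbb{P}$ yields $R^1\pi_*(K\otimes H^{n+m})=0$ for $n+m$ large, ensuring surjectivity of the second arrow. Enlarging $n_0$ to cover both bounds gives part (1).

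The main obstacle is part (1): the auxiliary sheaf $g_*\O_X$ is not a line bundle, so the reduction to part (2) via the projective bundle $\mathbb{P}_Y(f_*\O_X(L))$ requires tracking both the module multiplication $g_*\O_X\otimes g_*\F\twoheadrightarrow g_*\F$ and its kernel, and applying Keeler's vanishing to the kernel. Once the two-step factorization is in place, relative Castelnuovo-Mumford regularity applied on $\pi$ handles the remaining surjectivity as a black box.
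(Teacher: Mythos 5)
Your part (2) is essentially the paper's own argument: Keeler's relative Fujita vanishing makes $\F(nL+N)$ relatively $0$-regular with respect to $L$, uniformly in the $f$-nef twist $N$, and relative Castelnuovo--Mumford regularity \cite[Example 1.8.24]{Laz04} gives the multiplication statement. Two small glosses: the cited regularity result directly handles multiplication by $f_*\O_X(L)$ (the case $m=1$); for $m\ge 2$ one still needs the short observation that the iterated surjection $(f_*\O_X(L))^{\otimes m}\otimes f_*(\F(nL+N))\to f_*(\F((m+n)L+N))$ factors through the map in question (the paper writes this out), and your bound $n_0=n_1+\dim X-\dim Y$ should use the maximal fiber dimension (or simply $\dim X$), since fibers of $f$ need not be equidimensional. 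Both are cosmetic. Also, Lemma \ref{lem:locfree} does not allow you to ``assume $f_*\O_X(L)$ is locally free'': it only concerns twists by large multiples of an $f$-ample divisor over the flat locus of $\F$. Fortunately local freeness is not needed to form $\mathbb{P}:=\mathbb{P}_Y(f_*\O_X(L))$, the morphism $g$, and the $\pi$-ample, $\pi$-free $H=\O_{\mathbb{P}}(1)$.

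The genuine gap is in part (1), where you diverge from the paper. Your first arrow, $\pi_*((g_*\O_X)\otimes H^{\otimes m})\otimes\pi_*((g_*\F)\otimes H^{\otimes n})\to\pi_*((g_*\O_X\otimes g_*\F)\otimes H^{\otimes(m+n)})$, is not a consequence of relative Castelnuovo--Mumford regularity: regularity controls multiplication by sections of powers of the relatively ample and free line bundle $H$ itself, not products of sections of two arbitrary coherent sheaves twisted by $H$. Such a surjectivity can be arranged when \emph{both} $m$ and $n$ are large (relative global generation of $(g_*\O_X)\otimes H^{\otimes m}$ plus a vanishing for a kernel), and indeed you only claim it ``for $n,m$ large''; but the lemma asserts surjectivity for \emph{every} $m>0$ once $n\ge n_0$, with $n_0$ independent of $m$, and this uniformity is exactly what the paper uses later (Step 4 of the proof of Theorem \ref{thm:thmp2} and Corollary \ref{cor:notbig}, where the first factor is fixed and only $n$ grows). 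For small $m$, say $m=1$, there is no reason for your first arrow to be surjective, so your scheme proves a strictly weaker statement. The paper's reduction avoids the problem: it applies part (2) not to the pair $(g_*\O_X,g_*\F)$ but to the morphism to $Y$ from the intermediate space and the single sheaf $g_*\F$, obtaining surjectivity of $\pi_*(H^{\otimes m})\otimes\pi_*((g_*\F)\otimes H^{\otimes n})\to\pi_*((g_*\F)\otimes H^{\otimes(m+n)})\cong f_*(\F((m+n)L))$ for all $m>0$, $n\ge n_0$; since this map factors through the map of (1) via $\pi_*(H^{\otimes m})\to\pi_*((g_*\O_X)\otimes H^{\otimes m})\cong f_*\O_X(mL)$ (unit of adjunction plus projection formula), the map of (1) is surjective as well, with no control of sections of $g_*\O_X$-twists needed. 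Your argument is repaired by replacing the two-arrow factorization with this domination trick, which works verbatim on your $\mathbb{P}$ (the paper uses the image $Z$ of $X$ with an $h$-ample, $h$-free line bundle pulling back to $\L$, which is the same device).
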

\begin{proof}
\def\M{{\mathcal M}}
Replacing $f:X\to Y$ by $f_W:X_W\to W$, we may assume that $W=Y$.
Set $\L:=\O_X(L)$. We first show that (2) implies (1). Indeed, since $\L$ is $f$-free, 
there exist surjective projective morphisms $g:X\to Z$ and $h:Z\to Y$ with $h\circ g=f$
such that $\L\cong g^*\M$ for an $h$-ample and $h$-free line bundle $\M$ on $Z$. 
Then we have the following commutative diagram of natural morphisms:
\begin{align*}
\xymatrix@R=25pt@C=15pt{ (h_*\M^m)\otimes h_*((g_*\F)\otimes \M^n) \ar[r] \ar[d] & 
(h_*g_*g^*\M^m) \otimes h_*((g_*\F)\otimes \M^n) \ar[r]^(0.55){\cong} &
h_*g_*\L^m \otimes h_*g_*(\F\otimes \L^n) \ar[d] \\
h_*((g_*\F)\otimes\M^{m+n}) \ar[rr]^{\cong} & & h_*g_*(\F\otimes \L^{m+n}) }
\end{align*}
Here the isomorphisms follow from the projection formula. 
The surjectivity of the left vertical morphism induces that of the right vertical morphism.
Hence we see that it is enough to prove (2). \par
We show (2). We first prove the case $m=1$. 
In this case, by Keeler's relative Fujita vanishing (\cite[Theorem 1.5]{Kee03}), 
there exists an integer $n_0>0$ such that for each $n\ge n_0$ and every $f$-nef line bundle $\N$ on $X$, 
$\F\otimes\L^n\otimes\N$ is $0$-regular with respect to $\L$ and $f$,
and hence the surjectivity follows from the relative Castelnuovo-Mumford regularity \cite[Example 1.8.24]{Laz04}.  
Next we show the case when $m\ge2$.
In this case, we have the following commutative diagram of natural morphisms:
\begin{align*}
\xymatrix@R=25pt@C=10pt{ (f_*\L)^{\otimes m}\otimes f_*(\F\otimes\L^n\otimes \N) \ar[r] \ar[d] & 
(f_*\L)^{\otimes m-1} \otimes f_*(\F\otimes\L^{n+1}\otimes\N) \ar[r] & \cdots \ar[r] & f_*(\F\otimes\L^{m+n}\otimes\N)  \ar@{=}[d] \\
f_*\L^m\otimes f_*(\F\otimes\L^n\otimes\N) \ar[rrr] & & & f_*(\F\otimes\L^{m+n}\otimes\N)}
\end{align*}
By the above argument, we see that the upper horizontal morphisms are surjective, 
and hence so is the lower horizontal morphism, which is our claim.
\end{proof}
\begin{notation}\label{notation:main}
Let $k$ be an $F$-finite field and 
$f:X\to Y$ be a surjective projective morphism 
from a pure dimensional quasi-projective $k$-scheme $X$ satisfying $S_2$ and $G_1$ 
to a normal quasi-projective variety $Y$. 
Let $a>0$ be an integer and $E$ be an effective AC-divisor on $X$ such that $aK_X+E$ is Cartier.
Set $\Delta:=E\otimes a^{-1}$. 
%Let $Z\subset X$ be the subset of points $x$ such that $X$ is not Gorenstein or $E$ is not Cartier in a neighborhood of $x$. 
Let $U\subset X$ be a Gorenstein open subset and 
let $S\subseteq Y$ be a non-empty subset such that the following conditions hold for every $y\in S$:
\begin{itemize} 
\item[$($i$)$] $Y$ is regular in a neighborhood of $y$ and $f$ is flat at every point in $f^{-1}(y)$.
%and $X$ is Gorenstein in a neighborhood of every codimension one point of $f^{-1}(y)$.
%\item[$($ii$)$] $f$ is flat at every point in $f^{-1}(y)$.
\item[$($ii$)$] $\codim_{X_{\ol y}}(X_{\ol y}\setminus U_{\ol y})\ge2$.
\item[$($iii$)$] The support of $E$ does not contain any irreducible component of $f^{-1}(y)$.
%$E$ is Cartier in a neighborhood of and 
\item[$($iv$)$] $(X_{\ol y},\ol{\Delta|_{U_{\ol y}}})$ is $F$-pure, 
where $\ol y:=\Spec \ol{k(y)}$ and
$\ol{\Delta|_{U_{\ol y}}}$ is the $\Z_{(p)}$-AC divisor on $X_{\ol y}$ 
obtained as the unique extension of the $\Z_{(p)}$-Cartier divisor $\Delta|_{U_{\ol y}}$ on $U_{\ol y}$.
\end{itemize}
Let $D$ be a $\Q$-Cartier divisor on $Y$. 
\end{notation}
In this setting, we show the following theorems.
\begin{thm}\label{thm:thmp}\samepage
With the notation as in \ref{notation:main}, 
assume that $X$ is projective and $K_X+\Delta$ is $\Q$-Cartier.
\begin{itemize}
\item[$($1$)$] If $p\nmid a$ and $-(K_X+\Delta+f^*D)$ is nef, 
then $-(K_Y+D)$ is weakly positive over an open subset of $Y$ containing $S$.
\item[$($2$)$] If $K_X$ is $\Q$-Cartier and $-(K_X+\Delta+f^*D)$ is ample, 
then $-(K_Y+D)$ is big over an open subset of $Y$ containing $S$.
\end{itemize}
\end{thm}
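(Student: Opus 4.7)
The plan is to push forward the trace map of the relative Frobenius (Lemma~\ref{lem:relFrob}), globally generate the source on $Y$ via Lemma~\ref{lem:glgen}, and iterate the resulting relation through the multiplication maps of Lemma~\ref{lem:mult} until the invariant $t_{V_0}$ from Section~3 becomes infinite, at which point Lemma~\ref{lem:ts}(4) extracts the desired weak positivity.

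More concretely, I would work on the open set $V_0 := \{y\in V : (X_{\ol y}, \ol{\Delta|_{U_{\ol y}}})$ is $F$-pure$\}$, which is open in $V$ and contains $S$ by Lemma~\ref{lem:relFrob}(1). After fixing an ample Cartier divisor $A$ on $X$ and setting $L := -(K_X+\Delta+f^*D)$ (nef by hypothesis), the identity $K_{X/Y}+\Delta = -L - f^*(K_Y+D)$, combined with the projection formula and flat base change over $V_0$, recasts the surjection from Lemma~\ref{lem:relFrob}(2) (applied with the nef Cartier divisor $N := (p^e-1)L$) as an epimorphism
\[
f_*\O_X\bigl((p^e-1)L + p^e mA\bigr)\otimes\O_Y\bigl((p^e-1)(K_Y+D)\bigr)\twoheadrightarrow F_Y^{e*}f_*\O_X(mA)
\]
over $V_0$, valid for every $m \ge m_0$ and every $e$ with $a \mid p^e-1$ and $n \mid p^e-1$. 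Tensoring by $\O_Y(-(p^e-1)(K_Y+D))$ to cancel the twist on the left and invoking Lemma~\ref{lem:glgen} to globally generate the source on $Y$, I would obtain, with $\G := f_*\O_X(m_0 A)$, that
\[
(\dagger_e)\qquad F_Y^{e*}\G\otimes\O_Y\bigl(-(p^e-1)(K_Y+D)\bigr)\text{ is globally generated over } V_0
\]
for every admissible $e$.

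The relation $(\dagger_e)$ places $(p^e-1)/p^e \in T_{V_0}(\G, K_Y+D)$, giving $t_{V_0}(\G, K_Y+D) \ge 1$. To force the value to $+\infty$, I would enlarge $m_0$ using Lemma~\ref{lem:mult} so that the multiplication maps $\G^{\otimes k}\twoheadrightarrow f_*\O_X(km_0 A)$ are surjective over $V_0$ for every $k \ge 1$; tensoring $k$ copies of $(\dagger_e)$ and applying Lemma~\ref{lem:ts}(1),~(2) would give $t_{V_0}(f_*\O_X(km_0 A), K_Y+D) \ge k$ for each $k$. Following the extraction mechanism used in the proof of Lemma~\ref{lem:ts}(4), I would then fix a sufficiently ample Cartier divisor $H$ on $Y$ (uniformly in $k$) and produce surjections $f_*\O_X(km_0 A)^{\oplus h_k}\twoheadrightarrow \O_Y(H)$ over $V_0$ for every $k$, so that Lemma~\ref{lem:ts}(1) forces $t_{V_0}(\O_Y(H), K_Y+D) = +\infty$. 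Applying Lemma~\ref{lem:ts}(4) to the line bundle $\O_Y(H)$ would then conclude that $-(K_Y+D)$ is weakly positive over $V_0$, establishing~(1).

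For part~(2), I would perturb $D$ by a small ample divisor: with $L$ ample and $H$ any very ample Cartier divisor on $Y$, the divisor $L - \epsilon f^*H$ remains ample for sufficiently small $\epsilon\in\Q_{>0}$, so part~(1) applied with $D$ replaced by $D + \epsilon H$ shows that $-(K_Y+D) - \epsilon H$ is weakly positive over an open subset $V_\epsilon$ of $Y$ containing $S$; taking a sufficiently high symmetric power upgrades this to the bigness of $-(K_Y+D)$ over $V_\epsilon$. I expect the principal obstacle to be the uniform choice of the ample $H$ in the iteration step---producing one $H$ for which the required surjections $f_*\O_X(km_0 A)^{\oplus h_k}\twoheadrightarrow \O_Y(H)$ can be constructed over $V_0$ for every $k$---which I anticipate handling by combining the multiplicative structure of $\bigoplus_k f_*\O_X(km_0 A)$ afforded by Lemma~\ref{lem:mult} with the relative Castelnuovo--Mumford regularity controlling the duals $(f_*\O_X(km_0 A))^*$ against $H$.
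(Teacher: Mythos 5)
Your toolbox is the right one (Lemmas~\ref{lem:relFrob}, \ref{lem:glgen}, \ref{lem:mult} and the invariant $t_S$ of Section~3), but the combinatorial heart of the argument is missing, and the step you yourself flag as the ``principal obstacle'' is in fact fatal. Your relation $(\dagger_e)$ only says $t_{V_0}(\G,K_Y+D)\ge (p^e-1)/p^e$, i.e.\ $t_{V_0}(\G,K_Y+D)\ge 1$, for the \emph{fixed} sheaf $\G=f_*\O_X(m_0A)$, and this carries essentially no information about $-(K_Y+D)$: a fixed positive $\G$ can absorb a negative $K_Y+D$ (on $Y=\mathbb P^1$ with $\G=\O_Y(10)$ and $K_Y+D$ a point, ${F_Y^e}^*\G(-(p^e-1)(K_Y+D))\cong\O_Y(9p^e+1)$ is globally generated for every $e$, yet $-(K_Y+D)$ is not pseudo-effective). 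The bootstrap does not repair this: the inequality $t_{V_0}(f_*\O_X(km_0A),K_Y+D)\ge k$ scales the sheaf's own positivity and the coefficient by the same factor $k$, so it follows formally from $t\ge1$ for \emph{any} $\G$ and \emph{any} $D$; and the extraction step you propose --- surjections $f_*\O_X(km_0A)^{\oplus h_k}\twoheadrightarrow\O_Y(H)$ over $V_0$ for a single ample $H$ and all $k$ --- is impossible in general: already for $f=\mathrm{id}_Y$ a nonzero map $\O_Y(km_0A)\to\O_Y(H)$ forces $H-km_0A$ to be effective, so no fixed $H$ works for all $k$, and no regularity control of the duals (which become arbitrarily negative) can change this. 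What is missing is the mechanism by which the paper forces the invariant of a \emph{fixed} sheaf to be $+\infty$: nefness of $-(K_X+\Delta+f^*D)$ is used to produce the lower bound $dq_l\le t(-l,m)$, growing linearly in $l$ at a \emph{bounded} auxiliary twist $m$ (Step~3), and this is played off against the Frobenius upper bound $t(1-p^e,mp^e)\le p^e\,t(0,m)$ (Step~6) and the superadditivity $t(0,m)+t(-l,n)\le t(-l,m+n)$ (Step~5), yielding $(p^e-\mu)t(0,m_1)+t(1-p^e,m_1\mu)\le p^e t(0,m_1)$ and hence a contradiction with Step~3 unless $t(0,m_1)=+\infty$; only then does Lemma~\ref{lem:ts}~(4) apply. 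In your scheme the entire relative-canonical contribution $(p^e-1)(L+f^*(K_Y+D))$ is absorbed in a single application of Lemma~\ref{lem:glgen}, so the coefficient of $K_Y+D$ can never outgrow the auxiliary positivity, and the conclusion cannot be reached along this route.

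Two further points. First, your rewriting needs $(p^e-1)L$ and $(p^e-1)(K_Y+D)$ to be Cartier, i.e.\ the Cartier index of $D$ must divide $p^e-1$; the theorem assumes only $p\nmid a$, and if $p$ divides the index of $D$ there is no admissible $e$ at all. The invariant $t_S(\cdot,K_Y+D)$, which twists by $p^e\e D$ with rational $\e$, is set up precisely to avoid ever needing $(p^e-1)D$ to be Cartier. Second, your deduction of (2) from (1) applies (1) with $D$ replaced by $D+\e H$, but (1) requires $p\nmid a$, which is \emph{not} a hypothesis of (2); the paper first replaces $(a,E)$ by $(m(p^e+1),p^{e-c}E)$, using ampleness to absorb the resulting perturbation $\frac{1}{p^e+1}\Delta$, and only then invokes (1), so the case $p\mid a$ is left untreated in your argument.
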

\begin{thm}\label{thm:thmp2}\samepage
With the notation as in \ref{notation:main}, 
let $b>0$ be an integer such that $bD$ is Cartier. Set $L:=abK_X+bE+af^*(bD)$.
Assume that the natural morphism 
$$f^*f_*\O_X(-L)\to\O_X(-L)$$ 
is surjective over $f^{-1}(S)$.
If $p\nmid a$ and $f_*\O_X(-L)$ is globally generated over $S$, 
then $-(K_Y+D)$ is weakly positive over $S$.
\end{thm}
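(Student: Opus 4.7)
The plan is to combine the relative Frobenius trace machinery of Lemma~\ref{lem:relFrob}(2) with the numerical invariant $t_S$ of Definition~\ref{defn:inv}. First, using Remark~\ref{rem:wp}(2) I would replace $Y$ by a neighborhood of $S$ on which $Y$ is regular and $f$ is flat; weak positivity over $S$ is insensitive to this shrinking. The two hypotheses of the theorem---the surjectivity of $f^{*}f_{*}\O_{X}(-L)\to\O_{X}(-L)$ over $f^{-1}(S)$ and the global generation of $f_{*}\O_{X}(-L)$ over $S$---together imply that $\O_{X}(-L)$ is globally generated over $f^{-1}(S)$ by sections pulled back from $Y$. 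In particular, $-L$ is $f$-nef over $S$ (so $-kL$ is $f$-nef over $S$ for every integer $k\ge 1$), and $\O_{X}(-L)$ is linearly equivalent to an effective divisor on $X$.

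Second, fix a Cartier divisor $A$ on $X$ that is ample on $X$ (in particular $f$-ample). For each integer $e>0$ with $ab\mid p^{e}-1$ (set $q=p^{e}$ and $n=(q-1)/(ab)$) and each integer $k\ge 1$, I would apply Lemma~\ref{lem:relFrob}(2) with $N=-kL$ and $m\ge m_{0}(e,k)$ sufficiently large, obtaining the surjection over $S$:
\[
f_{*}\O_{X}\bigl((1-q)(K_{X/Y}+\Delta)+q(mA-kL)\bigr)\twoheadrightarrow F_{Y}^{e*}f_{*}\O_{X}(mA-kL).
\]
Using $L=ab(K_{X/Y}+\Delta)+abf^{*}(K_{Y}+D)$ and the projection formula, this can be rewritten as
\[
f_{*}\O_{X}\bigl(qmA-(n+qk)L\bigr)\twoheadrightarrow F_{Y}^{e*}\G_{m}^{(k)}\otimes \O_{Y}\bigl(-(n+qk)ab(K_{Y}+D)\bigr)
\]
over $S$, where $\G_{m}^{(k)}:=f_{*}\O_{X}(mA-kab(K_{X/Y}+\Delta))$. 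Since $-L$ is effective and $A$ is ample on $X$, the divisor $qmA-(n+qk)L$ is ample on $X$ for $m$ large (relative to $e,k$), and Lemma~\ref{lem:glgen} then makes the left-hand side globally generated on $Y$; hence the right-hand side is globally generated over $S$.

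Third, I would translate this global generation into a bound on $t_{S}$. Taking the Frobenius exponent $e'=0$ in Definition~\ref{defn:inv}, it gives $(n+qk)ab\in T_{S}(F_{Y}^{e*}\G_{m}^{(k)},K_{Y}+D)$, which by Lemma~\ref{lem:ts}(3) yields $t_{S}(\G_{m}^{(k)},K_{Y}+D)\ge (n+qk)ab/q=kab+1-1/q$. Letting first $e\to\infty$ and then $k\to\infty$, this lower bound diverges to $+\infty$. To transfer the bound to the fixed sheaf $f_{*}\O_{X}(-L)$ (of positive rank by hypothesis), one applies the natural multiplication maps $\G_{m}^{(1)}\otimes f_{*}\O_{X}(-(k-1)ab(K_{X/Y}+\Delta))\to\G_{m}^{(k)}$, which are surjective over $S$ for $m$ large by Lemma~\ref{lem:mult}(2), together with Lemma~\ref{lem:ts}(1)--(3), to conclude $t_{S}(f_{*}\O_{X}(-L),K_{Y}+D)=+\infty$. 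Lemma~\ref{lem:ts}(4) then gives the sought weak positivity of $-(K_{Y}+D)$ over $S$.

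The hardest step is this last transfer: the multiplicative inequality of Lemma~\ref{lem:ts}(2) proceeds in only one direction, and compatibility between multiplication of direct images and Frobenius pullback across the varying parameters $(e,k,m)$ must be controlled carefully, so that an unbounded sequence of admissible $\varepsilon$ in $T_{S}$ is produced for the single fixed sheaf $f_{*}\O_{X}(-L)$ rather than only for the auxiliary family $\{\G_{m}^{(k)}\}$.
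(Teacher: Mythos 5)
There is a genuine gap, and it sits exactly where you flagged it: the transfer to a fixed sheaf cannot be carried out with the tools you invoke. Your lower bounds $t_S(\G^{(k)}_{m},K_Y+D)\ge kab+1-1/p^e$ are only obtained for $m=m(e,k)$ growing with $(e,k)$: since $-L$ is merely effective and relatively free over $S$, not nef, the twist $-(n+p^ek)L$ is not covered by the uniform $m_0$ of Lemma~\ref{lem:glgen}, so the divergent bounds are spread over an infinite family of different sheaves (your $\G^{(k)}_m$ is the paper's $\G(-kab,m)$). The multiplication maps you propose, $\G^{(1)}_m\otimes f_*\O_X(-(k-1)ab(K_{X/Y}+\Delta))\to \G^{(k)}_m$, map \emph{into} the sheaf whose $t_S$ is large, and Lemma~\ref{lem:ts}(1)--(2) only yield $t_S(\G^{(1)}_m)+t_S(f_*\O_X(\cdots))\le t_S(\G^{(k)}_m)$; an inequality in this direction cannot push the unbounded lower bound back onto $f_*\O_X(-L)$ or onto any fixed positive-rank sheaf, so the hypothesis of Lemma~\ref{lem:ts}(4) is never verified. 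The paper closes precisely this hole by arguing by contradiction with a twist fixed once and for all: assuming all $t(l,m):=t_S(\G(l,m),K_Y+D)$ are finite, it derives the linear growth $dq_{l-l_0}+t(-r_{l-l_0}-l_0,1)\le t(-l,1)$ from your two hypotheses via Lemma~\ref{lem:mult}(1) (this is where the surjectivity of $f^*f_*\O_X(-L)\to\O_X(-L)$, i.e.\ the $f_S$-freeness of $-L$, is used), the upper bound $t(1-p^e,mp^e)\le p^e t(0,m)$ from Lemma~\ref{lem:relFrob}(2), and the subadditivity $t(0,m)+t(-l,n)\le t(-l,m+n)$ to confront the two bounds at a bounded twist, reaching a contradiction. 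Without an upper bound of this kind, or some other device keeping the auxiliary twist bounded, your direct argument does not close.

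A second, independent problem is the appeal to an ample divisor $A$ on $X$ and to Lemma~\ref{lem:glgen}: in Theorem~\ref{thm:thmp2}, unlike Theorem~\ref{thm:thmp}, $X$ and $Y$ are only quasi-projective (Notation \ref{notation:main}), and the theorem is applied in Corollary~\ref{cor:notbig} precisely after shrinking $Y$, where projectivity is lost. Lemma~\ref{lem:glgen} assumes $X$ and $Y$ projective (it rests on Fujita vanishing), so the asserted global generation of $f_*\O_X(p^emA-(n+p^ek)L)$ is not available; moreover ample plus effective need not be ample, so even the positivity of $p^emA-(n+p^ek)L$ needs care. The paper's proof deliberately uses only an $f$-ample, $f$-free $A'$ and an ample free $H$ on $Y$, getting the needed global generation either directly from the hypothesis (via $(\G(-d,0))(-d(K_Y+D))\cong f_*\O_X(-L)$, giving $d\le t(-d,0)$) or by twisting finitely many sheaves by a large multiple of $f^*H$. (Minor point: the exponent $e'=0$ is not permitted in Definition~\ref{defn:inv}, though using the exponent $e$ itself gives the same bound.)
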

\begin{rem}
In the case when $X$ is a normal variety and $S$ is 
the singleton $\{\eta\}$ of the generic point $\eta$ of $Y$, 
the conditions (i)--(iii) above hold. However, 
the condition (iv) does not necessarily hold even if $X$ is smooth and $\Delta=0$. 
\end{rem}
\begin{proof}[Proof of Theorem $\ref{thm:thmp}$]
First we show that (1) implies (2). 
By the assumption of (2), $\Delta$ is $\Q$-Cartier. 
Let $m>0$ and $c\ge0$ be integers such that $a=mp^c$ and $p\nmid m$. 
Take an integer $e\gg0$. Set $a':=m(p^e+1)$, $E':=p^{e-c}E$ and $\Delta':=(p^{e-c}E)\otimes {a'}^{-1}$. 
Then $p\nmid a'$, and $E'$ satisfies the assumption (iii). 
Furthermore, $\Delta'$ satisfies the assumption (iv). Indeed, we have 
\begin{align*}
\Delta-\Delta'&=a'E\otimes\frac{1}{aa'}-ap^{e-c}E\otimes\frac{1}{aa'} \\
&=(m(p^{e}+1)-mp^cp^{e-c})E\otimes\frac{1}{aa'} 
=mE\otimes\frac{1}{aa'}
=\frac{m}{a'}\Delta=\frac{1}{p^e+1}\Delta \ge0, 
\end{align*}
and hence $\Delta'_{\ol y}\le \Delta_{\ol y}$ for every $y\in S$, 
which implies that $(X_{\ol y},\Delta'_{\ol y})$ is also $F$-pure for every $y\in S$. 
Replacing $e$ by a larger integer if necessary, we may assume that 
$$-(K_X+\Delta'+f^*D)=-(K_X+\Delta+f^*D)-\frac{1}{p^e+1}\Delta$$ is an ample $\Q$-Cartier divisor. 
Then $-(K_X+\Delta'+f^*(D'+\e H))$ is nef for an ample Cartier divisor $H$ on $Y$ 
and an $\e\in\Q$ with $0<\e\ll1$. 
By (1) we see that $-(K_Y+D+\e H)$ is weakly positive over an open subset $Y_1\subseteq Y$ containing $S$, 
and hence $-(K_Y+D)$ is big over $Y_1$. \par
Next we show (1). The proof is divided into seven steps. \\
\setcounter{step}{0}
\begin{step}
In this step, we prove that we may assume that $S$ is an open subset. 
In other words, there exists an open subset $S'\subseteq Y$ containing $S$ such that 
the conditions (i)--(iv) hold for every $y\in S'$. 
Note that weak positivity over $S'$ obviously implies weak positivity over $S$.
Set $r:=\dim X-\dim Y$. Let $Y'$ be the subset of points satisfying condition (i). 
Then $Y'\subseteq Y$ is an open subset containing $S$, 
and $X_{\ol y}$ is an $S_2$ scheme of pure dimension $r$ for every $y\in Y'$.
Since the function $\dim (X\setminus U)_{\ol y}$ on $Y$ is upper semicontinuous 
by Chevalley's theorem (\cite[Corollaire 13.1.5]{Gro65}), 
we obtain that the function 
$
\codim_{X_{\ol y}}(X_{\ol y}\setminus U_{\ol y})
=r-\dim \left((X\setminus U)_{\ol y}\right)
$ 
on $Y'$ is lower semicontinuous. 
Therefore the subset of points $y$ in $Y'$ with 
$\codim_{X_{\ol y}}(X_{\ol y}\setminus U_{\ol y})\ge 2$ 
is open, and contains $S$.
By an argument similar to the above, 
we see that the subset of points $y$ in $Y'$ with $\codim_{X_{\ol y}}(E_{\ol y})\ge1$ is open, and contains $S$.
Hence, shrinking $Y'$ if necessarily, we may assume that every $y\in Y'$ satisfies the conditions (i)--(iii).
Applying Lemma \ref{lem:relFrob} (1) (set $S'=V$), we obtain an open subset 
$S'\subseteq Y'$ containing $S$ such that the conditions (i)--(iv) hold for every $y\in S'$. 
\end{step}
\begin{step}
In this step we reduce the proof to a numerical condition.
By Step 1, we may assume that $S\subseteq Y$ is open.
Let $A$ be an ample and free Cartier divisor on $X$. 
By Lemma \ref{lem:glgen}, we have an integer $m'>0$ such that 
$f_*\O_X(mA)$ is globally generated for each integer $m\ge m'$.
Let $V$ denote the regular locus of $Y$. Note that $S\subseteq V$ by the assumption (i). 
By Lemma \ref{lem:locfree}, we have an integer $m''>0$ such that 
${f_V}_*\O_{X_V}(N+mA_V)$ is locally free over $S$ for each integer $m\ge m''$
and every Cartier divisor $N$ on $X_V$ whose restriction $N_S$ to $X_S$ is $f_S$-nef.
Replacing $A$ by $\max\{m',m''\}A$ if necessary, we may assume that $m'=m''=1$.
For simplicity, we set 
\begin{align*}
D'&:=D+K_Y, \\
\G(l,m)&:={f_V}_*\O_{X_V}(la^{-1}(aK_{X_V/V}+E_{V})+mA_{V})\quad \textup{and}\\
t(l,m)&:=t_S(\G(l,m),D'|_{V})
\end{align*} 
for every integers $l,m$ with $a|l$. 
Then we have $t(0,m)\ge0$ for each $m>0$ by the global generation of $f_*\O_X(mA)$.
Furthermore, since $-la^{-1}(aK_{X_V/V}+E_V)_S$ is $f_S$-nef 
for each $l\ge0$ with $a|l$ by the assumption,
$\G(-l,m)$ is locally free over $S$ for each $l\ge0$ with $a|l$ and $m>0$.
Our goal is to prove that $t(0,m)=+\infty$ for an integer $m\ge0$. 
If this is shown, then by Lemma \ref{lem:ts} we obtain that 
$-D'|_{V}=-(K_Y+D)|_{V}$ is weakly positive over $S$. 
%Note that $\G(0,m)$ is a coherent sheaf of positive rank.
As mentioned in Remark \ref{rem:wp} (2), this is equivalent to the weak positivity of $-(K_Y+D)$. 
In order to get a contradiction, we assume that $t(0,m)\ne+\infty$ for each integer $m>0$. 
Then we have $0\le t(0,m)\in\R$.  
\end{step}
\begin{step}
Let $d>0$ be an integer divisible by $a$ such that $dD$ and $da^{-1}(aK_X+E)$ is Cartier. 
Let $q_l$ and $r_l$ denote the quotient and the remainder 
of the division of an integer $l$ by $d$, respectively. 
In this step, we show that there exists an integer $m_0>0$ such that $$dq_l\le t(-l,m)$$ 
for each $m\ge m_0$ and each $l\ge0$ with $a|l$. 
Since $aK_X+E$ is Cartier and $S$ is contained in the regular locus $V$ of $Y$, 
we have that for each $0\le r<d$ with $a|r$, \def\M{{\mathcal{M}}}
$$\M_r:=f^*\o_Y^{\otimes r}(-ra^{-1}(aK_X+E))$$ is invertible in a neighborhood of $f^{-1}(S)$.
Hence we see that $\M_r$ is flat over $Y$ at every point of $f^{-1}(S)$ by the assumption (i).
As shown in Lemmas \ref{lem:locfree} and \ref{lem:glgen}, 
there exists an integer $m_0>0$ such that for every integer $m\ge m_0$, 
for every nef Cartier divisor $N$ on $X$ and for each $0\le r < d$ with $a|r$, 
$f_*(\M_r(N+mA))$ is locally free over $S$ and globally generated over $Y$. 
Set $L:=-(dK_X+da^{-1}E+f^*(dD))$. Then by the assumption of (1), $L$ is a nef Cartier divisor on $X$.
For each $m\ge m_0$ and each $l\ge0$ with $a|l$, we have the following isomorphisms:
\begin{align*}
&\left(\G(-l,m)\right)(-dq_lD'|_{V})\\
= &\left({f_V}_*\O_{X_V}(-la^{-1}(aK_{X_V/V}+E_{V})+mA_{V})\right)(-dq_lD'|_{V}) \\
\cong &\left({f_V}_*\O_{X_V}((-r_l-dq_l)a^{-1}(aK_{X_V/V}+E_{V})+mA_{V})\right)(-q_l(dD'|_{V})) \\
\cong &{f_V}_*\O_{X_V}(-r_la^{-1}(aK_{X_V/V}+E_{V})-q_l(dK_{X_V/V}+da^{-1}E_{V}+{f_V}^*(dD'|_{V}))+mA_{V}) \\
\cong &{f_V}_*\O_{X_V}(-r_la^{-1}(aK_{X_V/V}+E_{V})-q_l(dK_{X_V}+da^{-1}E_{V}+{f_V}^*(dD|_{V}))+mA_{V}) \\
\cong &{f_V}_*\O_{X_V}(r_l{f_V}^*K_{V}-r_la^{-1}(aK_{X}+E)_{V}-q_l(dK_{X}+da^{-1}E+{f}^*(dD))_{V}+mA_{V}) \\
\cong &{f_V}_*\left((\M_{r_l}|_{X_V})(q_lL_{V}+mA_{V})\right) %\\
\cong \left(f_*\M_{r_l}(q_lL+mA)\right)|_{V} 
\end{align*} 
Here the isomorphisms in the forth and the last line follow from 
the projection formula and the flatness of $V\to Y$, respectively. 
Note that since $a|l$ and $a|d$, we have $a|r_l$. 
By the choice of $m_0$, we see that $f_*(\M_{r_l}(q_lL+mA))$ is globally generated, 
and hence so is $\left(\G(-l,m)\right)(-dq_lD'|_{V})$.
This implies that $dq_l\le t(-l,m)$. 
\end{step}
\begin{step}
The assumption of the projectivity of $X$ is used only in Steps 2 and 3. 
Hence, replacing $f:X\to Y$ be $f_V:X_V\to V$, we may assume that $V=Y$ in the steps below. 
Then
\begin{align*} 
%D'&=D+K_Y, \\ 
\G(l,m)=f_*\O_X(la^{-1}(aK_{X/Y}+E)+mA)\quad \textup{and} \quad %\\ 
t(l,m)=t_S(\G(l,m),D') 
\end{align*}
for every integers $l,m$ with $a|l$. 
\end{step}
\begin{step}
In this step, we show that there exists an integer $n_0> 0$ such that 
\begin{align*} 
t(0,m)+t(-l,n)\le t(-l,m+n) 
\end{align*} 
for each $n\ge n_0$, each $m>0$ and each $l\ge0$ with $a|l$.  
By Step 1 we may assume that $S\subseteq Y$ is open. 
By the choice of $A$, we have that $A_S$ is $f_S$-ample and $f_S$-free.  
Applying Lemma \ref{lem:mult} (2),
there exists an integer $n_0>0$ such that for each $n\ge n_0$, each $m>0$ 
and every Cartier divisor $N$ on $X$ whose restriction $N_S$ to $X_S$ is $f_S$-nef, 
\begin{align*}
{f_S}_*\O_{X_S}(mA_S) \otimes {f_S}_*\O_{X_S}(N_S+nA_S)
\to {f_S}_*\O_{X_S}(N_S+(m+n)A_S)
\end{align*} 
is surjective. Since $-(aK_{X/Y}+E)_{S}$ is an $f_{S}$-nef Cartier divisor, 
it follows that for each $n\ge n_0$ and each $l\ge0$ with $a|l$, 
the natural morphism $$\G(0,m)\otimes\G(-l,n) \to \G(-l,m+n)$$ is surjective over $S$.
Hence %by Lemma \ref{lem:ts} (1) and (2) 
we obtain that 
\begin{align*} 
t(0,m)+t(-l,n) 
\overset{\textup{\hspace{54pt}}}{=} &t_S(\G(0,m),D')+t_S(\G(-l,n),D') \\
\overset{\textup{Lemma \ref{lem:ts} (2)}}{\le} &t_S(\G(0,m)\otimes\G(-l,n),D') \\
\overset{\textup{Lemma \ref{lem:ts} (1)}}{\le} &t_S(\G(-l,m+n),D')=t(-l,m+n).
\end{align*} 
Here note that since $t(0,m)\ne+\infty,-\infty$ as mentioned in Step 2, we can use Lemma \ref{lem:ts} (2).
\end{step}
\begin{step}
In this step, we prove that there exists an integer $m_1>0$ such that 
$$t(1-p^e,mp^e)\le p^et(0,m)$$ for each $m\ge m_1$ and each $e\ge0$ with $a|(p^e-1)$. 
Now we have the following morphism: 
$${f_{Y^e}}_*\phi^{(e)}_{(X/Y,E/a)}\otimes\O_{X_{Y^e}}(mA_{Y^e}): 
\G(1-p^e,mp^e)\to{f_{Y^e}}_*\O_{X_{Y^e}}(mA_{Y^e})\cong{F_Y^e}^*\G(0,m)$$ 
for each integers $e,m\ge0$ with $a|(p^e-1)$. 
Here the isomorphism follows from the flatness of $F_Y^e$.  
Applying Lemma \ref{lem:relFrob} (2),
we see that there exists an integer $m_1>0$ such that the morphism 
\begin{align*} 
{(f_S)_{S^e}}_*\left(\phi^{(e)}_{(X_S/S,E_S/a)}\otimes\O_{X_{S^e}}(mA_{S^e})\right)
\cong \left({f_{Y^e}}_*\phi^{(e)}_{(X/Y,E/a)}\otimes\O_{X_{Y^e}}(mA_{Y^e})\right)|_S 
\end{align*} 
is surjective for each $m\ge m_1$ and each $e\ge0$ with $a|(p^e-1)$. 
Therefore %by Lemma \ref{lem:ts} (1) and (3), 
we obtain that 
\begin{align*} 
t(1-p^e,mp^e) =t_S(\G(1-p^e,mp^e),D') 
\overset{\textup{Lemma \ref{lem:ts} (1)}}{\le} &t_S({F_Y^e}^*\G(0,m),D') \\
\overset{\textup{Lemma \ref{lem:ts} (3)}}{=} &p^et_S(\G(0,m),D')=p^et(0,m).
\end{align*}
\end{step}
\begin{step}[7] 
In this step, we obtain a contradiction.
Fix an integer $\mu>0$ such that $m_1\mu\ge \max\{m_0,n_0\}$. Then by Steps 5 and 6, we see that 
\begin{align*} 
(p^e-\mu)t(0,m_1) + t(1-p^e,m_1\mu)
\overset{\textup{Step 5}}{\le} t(1-p^e,m_1p^e) 
\overset{\textup{Step 6}}{\le} p^et(0,m_1) 
\end{align*} 
for each $e\ge0$ with $a|(p^e-1)$ and $p^e\ge\mu$. 
Therefore, we obtain that $t(1-p^e,m_1\mu)\le \mu t(0,m_1).$ 
In particular, $t(1-p^e,m_1\mu)$ is bounded from above. 
However, as shown in Step 3 we have $dq_{p^e-1}\le t(1-p^e,m_1\mu)$, 
which implies that $t(1-p^e,m_1\mu)$ goes to infinity as $e$ goes to infinity. 
This is a contradiction.
\end{step}
\end{proof}
\begin{proof}[Proof of Theorem $\ref{thm:thmp2}$]  
The strategy of the proof is very similar to the one of Theorem \ref{thm:thmp} (1).
\setcounter{step}{0}
\begin{step}
In this step, we show that we may assume that $Y$ is regular.
Let $V$ be the regular locus of $Y$. Then we have $S\subseteq V$. 
As mentioned in Remark \ref{rem:wp} (2), 
the weak positivity of $-(K_Y+D)$ is equivalent to the weak positivity of $-(K_Y+D)|_V$.
Hence, replacing $f:X\to Y$ by $f_V:X_V\to V$, we may assume that $Y$ is regular. 
\end{step}
\begin{step}
In this step, we show that we may assume that $S\subseteq Y$ is open.
As shown in Step 1 of the proof of Theorem \ref{thm:thmp}, 
there exists an open subset $S'\subseteq Y$ containing $S$ such that 
the conditions (i)--(iv) hold for every $y\in S'$.
Let $B\subset X$ and $C\subset Y$ be, respectively, 
the supports of the cokernels of the natural morphisms  
$$f^*f_*\O_{X}(-L)\to\O_{X}(-L)\quad \textup{and}\quad H^0(X,f_*\O_X(-L))\otimes \O_Y\to f_*\O_X(-L). $$
Then $S\cap f(B)=S\cap C=\emptyset$ by the assumption. 
Hence we may replace $S$ by the open subset $S'\setminus(f(B)\cup C)$.
\end{step}
\begin{step}
In this step we reduce the proof to a numerical condition.
Let $A'$ be an $f$-ample and $f$-free Cartier divisor on $X$. 
Let $H$ be an ample and free Cartier divisor on $Y$. 
We take an integer $c>0$ and set $A:=A'+cf^*H$.
Applying Lemma \ref{lem:locfree} and Lemma \ref{lem:mult} (2),
there exists an integer $n'>0$ such that 
$f_*\O_X(N+nA)$ is locally free over $S$ and 
the natural morphism 
$$f_*\O_{X}(mA)\otimes f_*\O_{X}(N+nA)\to f_*\O_{X}(N+(m+n)A)$$ is surjective over $S$
for each $n\ge n'$, each $m>0$ and 
every Cartier divisor $N$ whose restriction $N_S$ to $X_S$ is $f_S$-nef. 
Note that since the statement is local on $Y$, $n'$ is independent of the choice of $c$.
Replacing $A$ by $n'A$, we may assume that $n'=1$. 
For simplicity, we set
\begin{align*}
D'&:=D+K_Y, \\
\G(l,m)&:=f_*\O_X(la^{-1}(aK_{X/Y}+E)+mA) \quad \textup{and} \\
t(l,m)&:=t_S(\G(l,m),D')
\end{align*}
for every integers $l,m$ with $a|l$. 
Note that since $-la^{-1}(aK_{X/Y}+E)_S$ is $f_S$-nef for each $l\ge0$ by the assumption,
we see that $\G(-l,m)$ is locally free over $S$ for each $l\ge0$ with $a|l$ and $m>0$.
Our goal is to prove that $t(-l,m)=+\infty$ for some integers $l,m\ge0$ with $a|l$. 
If this is shown, then the assertion follows from Lemma \ref{lem:ts} (4). 
Note that $\G(-l,m)$ is of positive rank, because of the $f_S$-freeness of 
$(-la^{-1}(aK_{X}+E)+mA)_S$.
In order to get a contradiction, we assume that 
$t(-l,m)\ne+\infty$ for every integers $l,m\ge0$ with $a|l$.  
\end{step}
\begin{step}
Set $d:=ab$.
Let $q_l$ and $r_l$ denote the quotient and the remainder 
of the division of an integer $l$ by $d$, respectively. 
In this step, we show that there exists an $l_0>0$ (independent of the choice of $c$) such that 
for each $l\ge l_0$ with $a|l$, 
$$dq_{l-l_0}+t(-r_{l-l_0}-l_0,1)\le t(-l,1).$$ 
By the projection formula, we have 
\begin{align*}
\left(\G(-d,0)\right)(-dD')\cong f_*\O_X(-b(aK_{X/Y}+E)-dD')=f_*\O_X(-b(aK_X+E+af^*D)).
\end{align*} 
The right hand side is globally generated over $S$ by the assumption, 
and hence so is the left hand side. This implies that 
\begin{align}
d\le t(-d,0)\label{align:d2}\tag{\ref{thm:thmp2}.1}
\end{align}
On the other hand, by the assumption, 
$-b(aK_{X/Y}+E)|_{X_S}=-L_S+ab{f_S}^*(D|_Y+K_Y)|_S$ is $f_S$-free. 
Applying Lemma \ref{lem:mult} (1), we have an integer $l_0>0$ such that 
for each $l\ge l_0$, the natural morphism 
\begin{align*}
f_*\O_X(-da^{-1}(aK_{X/Y}+E))\otimes& f_*\O_X(-la^{-1}(aK_{X/Y}+E)+A) \\
&\to f_*\O_X(-(d+l)a^{-1}(aK_{X/Y}+E)+A)
\end{align*}
is surjective over $S$. 
Note that since the statement is local on $Y$, $l_0$ is independent of the choice of $c$.
Then by an argument similar to Step 5 of the proof of Theorem \ref{thm:thmp}, we get
\begin{align}
t(-d,0)+t(-l,1)\le t(-d-l,1).\label{align:d3}\tag{\ref{thm:thmp2}.2}
\end{align}
Consequently, we obtain 
\begin{align*}
dq_{l-l_0}+t(-r_{l-l_0}-l_0,1) 
\overset{\textup{(\ref{align:d2})}}{\le} & q_{l-l_0}t(-d,0)+t(-r_{l-l_0}-l_0,l) \\
\overset{\textup{(\ref{align:d3})}}{\le} & t(-dq_{l-l_0}-r_{l-l_0}-l_0,1)=t(-l,1). 
\end{align*}
\end{step}
\begin{step}
We show that for each $m,n>0$ and $l\ge0$ with $a|l$,
\begin{align*} 
t(0,m)+t(-l,n)\le t(-l,m+n). 
\end{align*} 
As mentioned in the previous step, $-b(aK_{X/Y}+E)|_{X_S}$ is $f_S$-free, and so it is $f_S$-nef. 
By an argument similar to Step 5 of the proof of Theorem \ref{thm:thmp},
we obtain the claimed inequality.
\end{step}
\begin{step}
By the same argument as Step 6 of the proof of Theorem \ref{thm:thmp}, 
we see that there exists an $m_1>0$ such that for each $m\ge m_1$ and each $e\ge0$ with $a|(p^e-1)$,
\begin{align*}
t(1-p^e,mp^e)\le p^et(0,m).
\end{align*} 
\end{step}
\begin{step}
In this step we obtain a contradiction. 
Let $l_0$ be as in Step 4. 
Recall that $A:=A'+cf^*H$ and $l_0$ is independent of the choice of $c$.
Replacing $c$ by a larger integer, we may assume that 
for each integer $l$ with $0\le l\le d(l_0+1)$ and $a|l$, 
\begin{align*}
\G(l,1)=f_*\O_X(la^{-1}(aK_{X/Y}+E)+A'+cf^*H)\cong\left(f_*\O_X(la^{-1}(aK_{X/Y}+E)+A')\right)(cH)
\end{align*}
is globally generated, which implies that 
\begin{align}
0\le t(-l,1). \tag{\ref{thm:thmp2}.3}\label{align:-l}
\end{align}
Using the inequalities in the previous steps, we obtain that 
\begin{align*}
&(p^e-1)t(0,m_1)+dq_{p^e-1-l_0} \\ 
\overset{\textup{\hspace{0pt}(\ref{align:-l})\hspace{0pt}}}{\le} &(p^e-1)t(0,m_1)+(m_1-1)t(0,1)+t(-r_{p^e-1-l_0}-l_0,1)+dq_{p^e-1-l_0} \\
\overset{\textup{Step 4}}{\le} &(p^e-1)t(0,m_1)+(m_1-1)t(0,1)+t(1-p^e,1) \\
\overset{\textup{Step 5}}{\le} &(p^e-1)t(0,m_1)+t(1-p^e,m_1) 
\overset{\textup{Step 5}}{\le} t(1-p^e,m_1p^e) 
\overset{\textup{Step 6}}{\le} p^et(0,m_1).
\end{align*}
for each $e\ge0$ with $p^e-1\ge l_0$ and $a|(p^e-1)$.
Hence by the transposition we obtain $dq_{p^e-1-l_0}\le t(0,m_1)$. 
Note that since $0\le m_1 t(0,1)\le t(0,m_1)<+\infty$ by Step 5, we have $t(0,m_1)\in\R$. 
However, $dq_{p^e-1-l_0}$ goes to infinity as $e$ goes to infinity, which is a contradiction.
\end{step}
\end{proof}
In the remaining part of this section, we give some corollaries of 
the above theorems under the following situations: \\
\begin{notation}\label{notation:cors}
Let $f:X\to Y$ be a surjective morphism between regular projective varieties over an $F$-finite fields, 
and let $\Delta$ be an effective $\Q$-divisor on $X$ such that $a\Delta$ is integral. 
Let $D$ be a $\Q$-divisor on $Y$. 
Let $\ol\eta$ be the geometric generic point of $Y$. 
\end{notation}
\begin{cor} \label{cor:gl} %\samepage 
With the notation as in \ref{notation:cors}, 
assume that $f$ is flat, that the support of $\Delta$ does not contain any component of any fiber, 
and that $(X_{\ol y},\Delta_{\ol y})$ is $F$-pure for every point $y\in Y$. 
\begin{itemize} 
\item[$(1)$] If $p\nmid a$ and if $-(K_X+\Delta+f^*D)$ is nef, then so is $-(K_Y+D)$. 
\item[$(2)$] If $-(K_X+\Delta+f^*D)$ is ample, then so is $-(K_Y+D)$.
\end{itemize}
\end{cor}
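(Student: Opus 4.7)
The plan is to deduce both parts directly from Theorem \ref{thm:thmp} by taking $S := Y$, $E := a\Delta$, and $U := X$ (permissible since $X$ is smooth, hence Gorenstein, with empty complement). Conditions (i) and (ii) of Notation \ref{notation:main} then hold at every point of $Y$, while (iii) and (iv) coincide with the standing hypotheses of the corollary; note in particular that $\ol{\Delta|_{U_{\ol y}}} = \Delta_{\ol y}$ since $U = X$.

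For part (1), Theorem \ref{thm:thmp}(1) produces an open subset of $Y$ containing $S = Y$ — that is, all of $Y$ — over which $-(K_Y + D)$ is weakly positive. As remarked in the introduction, weak positivity over the entire base is equivalent to nefness for a line bundle on a projective variety (one checks this by unwinding Definition \ref{defn:positivity}(ii): if $L$ is nef then $aL + H$ is ample for every $a > 0$, so $b(aL+H)$ is globally generated for some $b$; conversely, the existence for every $a$ of such a $b$ forces $aL + H$ to be nef for every $a > 0$, so $L$ is a limit of nef classes and hence nef). This yields the conclusion.

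For part (2), my plan is to combine part (1) with the openness of the ample cone. Assume first that $p \nmid a$: pick an ample divisor $H$ on $Y$ and a rational $\varepsilon > 0$ small enough that
\[
-(K_X + \Delta + f^*(D + \varepsilon H)) = -(K_X + \Delta + f^*D) - \varepsilon f^*H
\]
is still ample. Part (1) applied to $D + \varepsilon H$ then gives that $-(K_Y + D + \varepsilon H)$ is nef, so $-(K_Y + D) = -(K_Y + D + \varepsilon H) + \varepsilon H$ is the sum of a nef and an ample class, hence ample. To reduce the case $p \mid a$ to the case $p \nmid a$, I would copy the substitution from the proof of Theorem \ref{thm:thmp}(2): write $a = m p^c$ with $p \nmid m$, and for $e \gg 0$ set $\Delta' := \tfrac{p^e}{p^e + 1}\Delta$. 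The new index is $a' := m(p^e+1)$, coprime to $p$, and $a'\Delta' = m p^e\Delta$ is integral. Since $\Delta' \le \Delta$, fiberwise $F$-purity is preserved by monotonicity in the boundary; moreover
\[
-(K_X + \Delta' + f^*D) = -(K_X + \Delta + f^*D) + \tfrac{1}{p^e+1}\Delta
\]
stays ample for $e$ large by openness of ampleness. The previous argument then applies verbatim with $\Delta'$ in place of $\Delta$.

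I do not foresee any serious obstacle: part (1) is a clean invocation of Theorem \ref{thm:thmp}(1), and part (2) is a standard two-step perturbation — first in $D$ (to upgrade the output from nef to ample), then in $\Delta$ (to eliminate the $p \nmid a$ hypothesis). The only things to verify carefully are the hypotheses of Notation \ref{notation:main} at every $y \in Y$ and the equivalence between weak positivity over all of $Y$ and nefness.
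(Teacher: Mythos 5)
Your proof is correct and is essentially the paper's: part (1) is exactly the paper's one-line argument, namely Theorem \ref{thm:thmp}(1) applied with $S=Y$, $E=a\Delta$, $U=X$, together with the equivalence (Remark \ref{rem:wp}) between weak positivity of a line bundle over all of $Y$ and nefness. For part (2) the paper simply cites Theorem \ref{thm:thmp}(2) and the corresponding equivalence ``big over $Y$'' $\Leftrightarrow$ ``ample'', whereas you re-derive the ample case from part (1) via the $\varepsilon H$- and $\Delta'$-perturbations; since these are precisely the perturbations by which the paper itself reduces Theorem \ref{thm:thmp}(2) to (1) (and your sign $-(K_X+\Delta'+f^*D)=-(K_X+\Delta+f^*D)+\tfrac{1}{p^e+1}\Delta$ is the correct one), the mathematical content coincides and only the packaging differs.
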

\begin{proof} 
This follows from Theorem \ref{thm:thmp} and Remark \ref{rem:wp} immediately. 
\end{proof}
%
%The author was taught the proof of Corollary \ref{cor:gen} (3) below by professor Yoshinori Gongyo. 
The author learned the proof of Corollary \ref{cor:gen} (3) below from professor Yoshinori Gongyo.
\begin{cor} \label{cor:gen} %\samepage 
With the notation as in \ref{notation:cors},
assume that $(X_{\ol\eta},\Delta_{\ol\eta})$ is $F$-pure. 
\begin{itemize} 
\item[$(1)$] If $p\nmid a$ and if $-(K_X+\Delta+f^*D)$ is nef, then $-(K_Y+D)$ is pseudo-effective.  
\item[$(2)$] If $-(K_X+\Delta+f^*D)$ is ample, then $-(K_Y+D)$ is big.  
\item[$(3)$] If $(X_{\ol\eta},\Delta_{\ol\eta})$ is strongly $F$-regular 
and if $-(K_X+\Delta+f^*D)$ is nef and big, then $-(K_Y+D)$ is big.  
\end{itemize}
\end{cor}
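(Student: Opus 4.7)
The plan is to reduce part~(3) to part~(2), which has already been established via Theorem~\ref{thm:thmp}~(2). The bridge from ``nef and big'' to ``ample'' will be supplied by a flexible form of Kodaira's lemma together with the openness of strong $F$-regularity recorded in the third item of the remark following Definition~\ref{defn:F-pure}.

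Set $L := -(K_X+\Delta+f^*D)$. Since $L$ is nef and big, Kodaira's lemma gives, for any fixed ample Cartier divisor $A$ on $X$, an integer $m>0$ and an effective $\Q$-divisor $E$ on $X$ such that
\[
L \sim_{\Q} \tfrac{1}{m}A + \tfrac{1}{m}E.
\]
The essential feature I would use is that, by taking $m$ large, the coefficients of $E' := \tfrac{1}{m}E$ can be made arbitrarily small.

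Next I would apply the openness statement in the third item of the remark following Definition~\ref{defn:F-pure} to the strongly $F$-regular pair $(X_{\ol\eta},\Delta_{\ol\eta})$ with the effective $\Q$-divisor $E_{\ol\eta}$: there exists a rational $\e>0$ such that $(X_{\ol\eta},\Delta_{\ol\eta}+\e E_{\ol\eta})$ is again strongly $F$-regular. Choosing $m$ large enough that $\tfrac{1}{m}\le\e$, the pair $(X_{\ol\eta},\Delta_{\ol\eta}+E'_{\ol\eta})$ is then strongly $F$-regular by the standard monotonicity of strong $F$-regularity in the boundary, and in particular is $F$-pure. Set $\Delta'' := \Delta + E'$. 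By construction
\[
-(K_X+\Delta''+f^*D) \;=\; L - E' \;\sim_{\Q}\; \tfrac{1}{m}A
\]
is ample, while $(X_{\ol\eta},\Delta''_{\ol\eta})$ is $F$-pure. Applying Corollary~\ref{cor:gen}~(2) to $(X,\Delta'')$ and $D$ yields that $-(K_Y+D)$ is big, as required.

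The main conceptual point, rather than a genuine obstacle, is that one cannot simply take an arbitrary Kodaira decomposition $L\sim_{\Q}A_0+E_0$ and absorb $E_0$ into $\Delta$: the divisor $E_0$ may have large coefficients, so $(X_{\ol\eta},\Delta_{\ol\eta}+(E_0)_{\ol\eta})$ need not remain $F$-pure. The whole reason for scaling the decomposition---trading a smaller ample piece for a small effective piece---is to make the effective perturbation harmless relative to the openness of strong $F$-regularity. This also explains why the hypothesis in~(3) is strengthened from $F$-purity (as in~(1) and~(2)) to strong $F$-regularity.
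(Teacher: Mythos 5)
Your reduction of (3) to (2) via Kodaira's lemma together with the openness of strong $F$-regularity in the boundary is exactly the paper's argument, so in substance you take the same route (and your treatment of (1) and (2) as already following from Theorem \ref{thm:thmp} via Remark \ref{rem:wp} matches the paper as well). One step, however, is phrased in a way that does not quite work: in the decomposition $L\sim_{\Q}\tfrac1m A+\tfrac1m E$ the divisor $E$ necessarily depends on $m$ (it represents $mL-A$), so you cannot simultaneously ``take $m$ large to make the coefficients of $\tfrac1m E$ small'' and apply the openness remark to a single fixed $E_{\ol\eta}$ and only afterwards choose $m$ with $\tfrac1m\le\e$; as written, $\e$ depends on $E$, hence on $m$, and the choice becomes circular, while for varying $E_m$ there is no a priori reason the coefficients of $\tfrac1m E_m$ shrink. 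The standard repair, which is what the paper's one-line proof implicitly does, is to fix one decomposition $L\sim_{\Q}A_0+E_0$ with $A_0$ an ample $\Q$-divisor and $E_0$ effective, and rescale only the effective part: for rational $0<\delta\le 1$ one has $L-\delta E_0\sim_{\Q}(1-\delta)L+\delta A_0$, which is ample because $L$ is nef, while $\delta E_0$ has arbitrarily small coefficients as $\delta\to 0$; the openness statement applied to the fixed divisor $E_0$ then provides a $\delta$ with $(X_{\ol\eta},\Delta_{\ol\eta}+\delta (E_0)_{\ol\eta})$ strongly $F$-regular, and part (2) applied to $\Delta+\delta E_0$ finishes the proof. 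With that rewording your argument coincides with the paper's.
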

\begin{proof} 
By remark \ref{rem:wp}, (1) and (2) of the corollary follow from 
(1) and (2) of Theorem \ref{thm:thmp}, respectively. 
We prove (3). By Kodaira's lemma, there exists a $\Q$-divisor $\Delta'\ge\Delta$ on $X$ such that 
$-(K_X+\Delta'+f^*D)$ is ample and $(X_{\ol\eta},\Delta'_{\ol\eta})$ is again strongly $F$-regular. 
Hence (3) follows from (2).
\end{proof}
\begin{cor}\label{cor:cbf} %\samepage
With the notation as in \ref{notation:cors},
assume that $(X_{\ol\eta},\Delta_{\ol\eta})$ is $F$-pure. 
If $p\nmid a$ and if $K_X+\Delta$ is numerically equivalent to $f^*(K_Y+L)$ 
for a $\Q$-divisor $L$ on $Y$, then $L$ is pseudo-effective.
\end{cor}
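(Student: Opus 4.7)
The plan is to reduce Corollary \ref{cor:cbf} directly to Corollary \ref{cor:gen}(1) by a choice of auxiliary divisor.

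First I would set $D := -K_Y - L$, which is a well-defined $\Q$-divisor on the regular variety $Y$ (every Weil divisor is $\Q$-Cartier). With this choice, the hypothesis $K_X+\Delta\equiv f^*(K_Y+L)$ gives
\[
K_X+\Delta+f^*D \;=\; K_X+\Delta-f^*(K_Y+L) \;\equiv\; 0,
\]
so $-(K_X+\Delta+f^*D)$ is numerically trivial, and in particular nef.

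Next I would verify that we are in the hypotheses of Corollary \ref{cor:gen}(1): the morphism $f:X\to Y$ is a surjective morphism between regular projective varieties over an $F$-finite field, $\Delta$ is effective with $a\Delta$ integral, $p\nmid a$, and $(X_{\ol\eta},\Delta_{\ol\eta})$ is $F$-pure by assumption. Applying Corollary \ref{cor:gen}(1) with this $D$, we conclude that $-(K_Y+D)$ is pseudo-effective. But by construction
\[
-(K_Y+D) \;=\; -(K_Y-K_Y-L) \;=\; L,
\]
so $L$ is pseudo-effective, which is the desired conclusion.

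There is no real obstacle here: the entire argument is a one-line reduction, the only nontrivial point being the observation that a numerically trivial divisor is nef, so one may legitimately feed the numerical equivalence $K_X+\Delta\equiv f^*(K_Y+L)$ into the hypothesis of Corollary \ref{cor:gen}(1). All the heavy lifting has already been done in Theorem \ref{thm:thmp} and its corollaries.
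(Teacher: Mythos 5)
Your proposal is correct and follows exactly the paper's own argument: set $D:=-(K_Y+L)$, note that $K_X+\Delta+f^*D$ is numerically trivial and hence nef, and apply Corollary \ref{cor:gen} (1) to conclude that $-(K_Y+D)=L$ is pseudo-effective. Nothing further is needed.
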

\begin{proof} 
Set $D:=-(K_Y+L)$. Then $K_X+\Delta+f^*D$ is numerically trivial, and so it is nef.
Hence by Corollary \ref{cor:gen} (1), we obtain the assertion.
\end{proof}
\begin{rem}\label{rem:cbf1}
Assume that $K_X+\Delta\sim_{\Q}f^*(K_Y+L)$ for a $\Q$-divisor.
It is known that if $(X_{\ol\eta},\Delta_{\ol\eta})$ is globally $F$-split, 
then $L$ is $\Q$-linearly equivalent to an effective $\Q$-divisor on $Y$ 
(see \cite[Theorem B]{DS15} or \cite[Theorem 3.18]{Eji15}). 
However, $(X_{\ol\eta},\Delta_{\ol\eta})$ is not necessary globally $F$-split 
even if $X_{\ol\eta}$ is a smooth curve and $\Delta=0$. 
Incidentally, Chen and Zhang proved that relative canonical divisors of elliptic fibrations 
are $\Q$-linearly equivalent to an effective $\Q$-divisor on $X$ \cite[3.2]{CZ13b}. 
\end{rem}
\begin{rem}\label{rem:cbf2}
In the case when $\dim Y=1$, Corollary \ref{cor:cbf} follows from 
a result of Patakfalvi \cite[Theorem 1.6]{Pat14}.
\end{rem}
\begin{cor}\label{cor:fano} %\samepage 
With the notation as in \ref{notation:cors},
assume that $f$ is flat and every geometric fiber is $F$-pure. 
\begin{itemize} 
\item[$(1)$] 
If $X$ is a Fano variety, that is, $-K_X$ is ample, then so is $Y$.  
\item[$(2)$] 
If the geometric generic fiber of $f$ is strongly $F$-regular and if $X$ is a weak Fano variety, 
that is, $-K_X$ is nef and big, then so is $Y$.  \end{itemize} 
\end{cor}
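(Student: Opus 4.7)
The plan is to deduce both parts directly from the corollaries already established earlier in the section, since Corollary \ref{cor:fano} is simply the specialization $\Delta = 0$, $D = 0$ of the preceding corollaries in the Fano/weak Fano setting.

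For part (1), I would apply Corollary \ref{cor:gl}(2) with $\Delta = 0$ and $D = 0$. The flatness of $f$ is given, the support condition on $\Delta$ is vacuous since $\Delta = 0$, and the $F$-purity hypothesis on closed fibers is part of the assumption. Thus the ampleness of $-K_X = -(K_X + \Delta + f^*D)$ immediately transfers to ampleness of $-K_Y = -(K_Y + D)$, proving that $Y$ is Fano.

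For part (2), I need to establish both nefness and bigness of $-K_Y$. For nefness, I would invoke Corollary \ref{cor:gl}(1) with $\Delta = 0$, $D = 0$, and $a = 1$ (so that $p \nmid a$ is automatic); the nefness of $-K_X$ then gives the nefness of $-K_Y$. For bigness, I would apply Corollary \ref{cor:gen}(3) with $\Delta = 0$ and $D = 0$: the geometric generic fiber is strongly $F$-regular by hypothesis, and $-K_X$ is nef and big, so the conclusion yields that $-K_Y$ is big. Combining these two gives that $-K_Y$ is nef and big, i.e., $Y$ is a weak Fano variety.

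There is essentially no obstacle here, since all of the real work has already been carried out in Theorem \ref{thm:thmp} and extracted via Corollaries \ref{cor:gl} and \ref{cor:gen}. The only small point to verify is that the hypothesis ``every geometric fiber is $F$-pure'' in Corollary \ref{cor:fano} implies both the hypotheses of Corollary \ref{cor:gl} (the closed fibers are $F$-pure) and of Corollary \ref{cor:gen} (the geometric generic fiber is $F$-pure), which is immediate. Thus the proof reduces to two short citations.
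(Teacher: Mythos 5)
Your proposal is correct and matches the paper's own argument: the paper proves Corollary \ref{cor:fano} precisely by setting $D=\Delta=0$ and citing Corollary \ref{cor:gl} (2) for part (1) and Corollary \ref{cor:gen} (3) (with the nefness statement of Corollary \ref{cor:gl} (1)) for part (2). Your slightly more explicit handling of the nefness step and of the choice $a=1$ is just a spelled-out version of the same two-line reduction.
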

\begin{proof} 
This follows from Corollaries \ref{cor:gl} (2) and \ref{cor:gen} (3) by setting $D=\Delta=0$. 
\end{proof}
\begin{cor}\label{cor:notample} %\samepage 
With the notation as in \ref{notation:cors}, assume that $Y$ is not a point.
\begin{itemize} 
\item[$(1)$] 
If $(X_{\ol\eta},\Delta_{\ol\eta})$ is $F$-pure, then $-(K_{X/Y}+\Delta)$ is not ample.
\item[$(2)$] 
If $(X_{\ol\eta},\Delta_{\ol\eta})$ is strongly $F$-regular, 
then $-(K_{X/Y}+\Delta)$ can not be both nef and big.
\end{itemize}
\end{cor}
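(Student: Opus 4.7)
The plan is to deduce both statements directly from Corollary \ref{cor:gen} by taking $D := -K_Y$ in the notation of \ref{notation:cors}. With this choice one has the key identity
\[
-(K_X+\Delta+f^*D) \;=\; -(K_X+\Delta - f^*K_Y) \;=\; -(K_{X/Y}+\Delta),
\]
so the positivity hypotheses on the relative anticanonical divisor translate literally into the positivity hypotheses of Corollary \ref{cor:gen} on $-(K_X+\Delta+f^*D)$. Simultaneously, $-(K_Y+D) = 0$; since $Y$ is not a point, the zero $\Q$-divisor on $Y$ is not big, so any conclusion of Corollary \ref{cor:gen} that forces $-(K_Y+D)$ to be big will give an immediate contradiction.

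Concretely, for part (1) I would argue by contradiction: if $-(K_{X/Y}+\Delta)$ were ample, then applying Corollary \ref{cor:gen}(2) with $D=-K_Y$, together with the assumed $F$-purity of $(X_{\ol\eta},\Delta_{\ol\eta})$, would yield that $0 = -(K_Y+D)$ is big on $Y$, which is impossible because $\dim Y \ge 1$. For part (2) the structure is identical, invoking Corollary \ref{cor:gen}(3) instead: strong $F$-regularity of $(X_{\ol\eta},\Delta_{\ol\eta})$ together with nefness and bigness of $-(K_{X/Y}+\Delta)$ would force $0$ to be big on the positive-dimensional $Y$, again a contradiction.

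There is essentially no obstacle here once the substitution $D=-K_Y$ is recognized; the entire content is that this substitution absorbs the ``$/Y$'' in $K_{X/Y}$ into the pulled-back divisor appearing in Corollary \ref{cor:gen}. In particular no coprimality hypothesis $p\nmid a$ is needed, since neither Corollary \ref{cor:gen}(2) nor (3) requires one. The characteristic-zero counterparts (Theorem \ref{thm:notample0}) presumably follow by a standard reduction to characteristic $p$ once both parts of this characteristic-$p$ statement are in hand.
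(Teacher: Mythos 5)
Your proposal is correct and is essentially identical to the paper's own argument: set $D=-K_Y$, note $-(K_X+\Delta+f^*D)=-(K_{X/Y}+\Delta)$ and $-(K_Y+D)=0$, which is not big since $\dim Y\ge 1$, and conclude from Corollary \ref{cor:gen} (2) and (3). Your observation that no hypothesis $p\nmid a$ is needed here is also consistent with the paper.
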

\begin{proof} 
Set $D:=-K_Y$. Then $-(K_X+\Delta+f^*D)=-(K_{X/Y}+\Delta)$.
Since $-(K_Y+D)=0$ is not big, the assertions follows from Corollary \ref{cor:gen} (2) and (3).
\end{proof} 
\begin{cor} \label{cor:notbig} %\samepage
With the notation as in \ref{notation:cors}, 
assume that $\O_{X}(-m(K_{X}+\Delta))|_{X_{\eta}}$ is globally generated for an $m>0$ with $a|m$ 
and that $Y$ is not a point.
If $p\nmid a$ and if $(X_{\ol\eta},\Delta_{\ol\eta})$ is $F$-pure, 
then $f_*\O_X(-m(K_{X/Y}+\Delta))$ is not big.
\end{cor}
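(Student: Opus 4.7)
The plan is to argue by contradiction using Theorem \ref{thm:thmp2}. Suppose $\G := f_*\O_X(-m(K_{X/Y}+\Delta))$ is big; the goal is to apply the theorem to a self fiber product of $f$ over $Y$, with a carefully chosen $D$, in order to obtain a contradiction.

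Fix an ample Cartier divisor $A$ on $Y$. By Observation \ref{obs:wp}(1)--(2), bigness of $\G$ yields an integer $c>0$ for which $(S^c\G)^{**}(-aA)$ is generically globally generated. I would then form the $c$-fold self fiber product $h\colon X^{(c)}:=X\times_Y\cdots\times_Y X\to Y$, restricted to the irreducible component dominating $Y$, with projections $p_i\colon X^{(c)}\to X$, and set $\Delta^{(c)}:=\sum_{i=1}^{c}p_i^*\Delta$. Over the flat locus of $f$ (which contains $\eta$), the scheme $X^{(c)}$ is pure dimensional and satisfies $S_2$, $G_1$; the pair $(X^{(c)}_{\ol\eta},\Delta^{(c)}_{\ol\eta})$ is $F$-pure as a product of $F$-pure pairs over a perfect base field; by Künneth, the line bundle $\O_{X^{(c)}_{\ol\eta}}(-m(K_{X^{(c)}/Y}+\Delta^{(c)}))$ is globally generated; and iterated flat base change together with Künneth give $h_*\O_{X^{(c)}}(-m(K_{X^{(c)}/Y}+\Delta^{(c)}))\cong \G^{\otimes c}$ over the flat locus.

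The crucial step is to upgrade the generic global generation of $(S^c\G)^{**}(-aA)$ to that of $\G^{\otimes c}(-aA)$. In characteristic zero the symmetrization embedding $S^c\G\hookrightarrow\G^{\otimes c}$ produces the required lift directly. In positive characteristic this embedding may fail whenever $p\le c$, so one must either argue via the divided-power invariants $\Gamma^c\G\hookrightarrow\G^{\otimes c}$ (which coincide with $S^c\G$ generically when $p>c$), or enlarge $c$ suitably using a Frobenius-pullback argument, exploiting the compatibility $F^{e*}(S^c\G)\cong S^c(F^{e*}\G)$ for locally free sheaves. This conversion is the step I expect to be the main obstacle.

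Once $h_*\O_{X^{(c)}}(-m(K_{X^{(c)}/Y}+\Delta^{(c)})-a\,h^*A)\cong \G^{\otimes c}(-aA)$ is known to be generically globally generated, I would apply Theorem \ref{thm:thmp2} to $h\colon X^{(c)}\to Y$ with $\Delta^{(c)}$ replacing $\Delta$, with $S:=\{\eta\}$, with the same integer $a$, with $b:=m/a$, and with $D:=-K_Y+(a/m)A$. One checks that $bD=-(m/a)K_Y+A$ is Cartier and that the resulting $L=m(K_{X^{(c)}/Y}+\Delta^{(c)})+a\,h^*A$ satisfies the remaining hypotheses, using $p\nmid a$, the Künneth global generation on the geometric generic fiber, and the previous step. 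The conclusion of the theorem is that $-(K_Y+D)=-(a/m)A$ is weakly positive over $\eta$, hence pseudo-effective by Remark \ref{rem:wp}(5). But $A$ is ample and $Y$ is not a point, so $-(a/m)A$ is anti-ample and cannot be pseudo-effective---a contradiction.
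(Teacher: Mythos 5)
Your reduction hinges on the step you yourself flag as the main obstacle, and that step is a genuine gap --- moreover, your proposed remedies do not repair it, even in characteristic $0$. The problem is one of direction: $S^c\G$ is a \emph{quotient} of $\G^{\otimes c}$, so global generation passes from the tensor power to the symmetric power, never conversely. The characteristic-zero (or $p>c$) symmetrization splitting only exhibits $S^c\G$ as a direct summand of $\G^{\otimes c}$; lifting sections of $(S^c\G)(-aA)$ through that splitting generates the summand $S^c\G_\eta(-aA)\subsetneq\G_\eta^{\otimes c}(-aA)$ at the generic point (a proper subspace as soon as $\mathrm{rank}\,\G\ge2$), so it does not verify the hypothesis of Theorem \ref{thm:thmp2} that $h_*\O_{X^{(c)}}(-L)\cong\G^{\otimes c}(-aA)$ be globally generated over $\eta$. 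The divided-power embedding $\Gamma^c\G\hookrightarrow\G^{\otimes c}$ and the compatibility $F^{e*}S^c\G\cong S^c F^{e*}\G$ suffer from exactly the same defect: they produce sections of a subsheaf or of another symmetric power, never of the full tensor power. Since the definition of bigness (Definition \ref{defn:positivity}, Observation \ref{obs:wp}) only ever gives you control of \emph{symmetric} powers, your fiber-product route demands strictly more than the hypothesis provides, and as written the contradiction cannot be reached. (The auxiliary points --- $S_2$, $G_1$ and $F$-purity of the product fibers, K\"unneth, flatness over a big open subset of $Y$ --- are plausible but would also need verification; they are not the essential difficulty.)

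The paper's proof sidesteps this entirely by never leaving $X$: bigness gives generic global generation of $(S^l\G(-m))(-H)$, and this is fed into the multiplication maps
$$(S^l\G(-m))^{\otimes n-n_0}\otimes\G(-lmn_0)\to\G(-lmn),$$
which are generically surjective by the generic relative global generation of $\O_X(-m(K_X+\Delta))|_{X_\eta}$ and Lemma \ref{lem:mult}\,(1); multiplication of sections of powers of a single line bundle factors through symmetric powers, so only maps \emph{out of} $S^l\G(-m)$ are needed --- the natural direction. One then concludes that $(\G(-lmn))((n_0+n_1-n)H)$ is generically globally generated and applies Theorem \ref{thm:thmp2} to $f\colon X\to Y$ itself with $D=H_n-K_Y$, $H_n=(n-n_0-n_1)(lmn)^{-1}H$, obtaining weak positivity of $-H_n$ and hence of $-H$, a contradiction. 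If you want to salvage your fiber-product strategy, you would first have to prove that bigness of $\G$ forces some $\G^{\otimes c}(-A)$ to be generically globally generated; that is a nontrivial assertion about tensor operations and weak positivity which the present paper neither proves nor needs.
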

\begin{proof}
Set $\G(n):=f_*\O_X(n(K_{X/Y}+\Delta))$ for every $n\in \Z$ with $a|n$. 
Let $H$ be an ample and free divisor on $Y$.
We show that the bigness of $\G(-m)$ induces the pseudo-effectivity of $-H$,  
which contradicts to the assumption that $Y$ is not a point.
Recall that the pseudo-effectivity of $-H$ is equivalent to its weak positivity.
Since $\G(-m)$ is torsion free, there exists an open subset $V\subseteq Y$ 
such that $\G(-m)|_V$ is locally free and $\codim_Y(Y\setminus V)\ge 2$.
As explained in Remark \ref{rem:wp} (2), 
$\G(-m)$ (resp. $-H$) is weakly positive if and only if so is $\G(-m)|_V$ (resp. $-H|_V$).
Replacing $f:X\to Y$ by $f_V:X_V\to V$, we may assume that $\G(-m)$ is locally free.
(Here we lose the projectivity of $X$ and $Y$.) 
We assume that $\G(-m)$ is big. 
Then by Observation \ref{obs:wp} (1) and (2),
we see that $(S^l\G(-m))(-H)$ is generically globally generated for some $l>0$. 
By the global generation of $\O_{X}(-m(K_{X}+\Delta))|_{X_{\eta}}$  
and Lemma \ref{lem:mult} (1),
we have an $n_0>0$ such that the natural morphism 
$$(S^l\G(-m))^{\otimes n-n_0}\otimes \G(-lmn_0) \to \G(-lmn)$$ 
is generically surjective for every $n\ge n_0$. 
Let $n_1>0$ be an integer such that $(\G(-lmn_0))(n_1 H)$ is globally generated. 
Tensoring the above morphism with $\O_X((n_0+n_1-n)H)$, 
%Taking the tensor product of the above morphism with $\O_X((n_0+n_1-n)H)$, 
we have the generically surjective morphism 
\begin{align*}
\left((S^l\G(-m))(-H)\right)^{\otimes n-n_0}\otimes (\G(-lmn_0))(n_1 H) \to (\G(-lmn))((n_0+n_1-n)H).
\end{align*}
From this we see that $(\G(-lmn))((n_0+n_1-n)H)$ is generically globall generated. Since 
\begin{align*}
(\G(-lmn))((n_0+n_1-n)H) 
&\cong f_*\O_X(-lmn(K_{X/Y}+\Delta)+f^*(n_0+n_1-n)H) \\
&\cong f_*\O_X(-lmn(K_{X/Y}+\Delta+f^*H_n)) \\
&\cong f_*\O_X(-lmn(K_{X}+\Delta+f^*(H_n-K_Y))),
\end{align*}
%is generically globally generated, 
where $H_n:=(n-n_0-n_1)(lmn)^{-1}H$,
we can apply Theorem \ref{thm:thmp2}, and then we get that $-(K_Y+(H_n-K_Y))=-H_n$ is 
weakly positive for every $n\ge n_0$. 
Hence $-H$ is weakly positive, which completes the proof.
\end{proof}
\begin{rem}\label{rem:counter-example} 
We cannot remove the assumption of $F$-purity of fibers 
in Corollaries \ref{cor:cbf} and \ref{cor:notbig}. 
Indeed, there is a quasi-elliptic fibration $g:S\to C$, that is, 
a surjective morphism from a smooth projective surface $S$ 
to a smooth projective curve $C$ whose general fibers are 
cuspidal curve of arithmetic genus one, 
such that $K_{S/C}\sim_{\Q}g^*L$ for a $\Q$-divisor $L$ on $C$ 
with $\deg L<0$ \cite{Ray78}\cite[Theorem~3.6]{Xie10}. 
Note that cuspidal singularities are not $F$-pure \cite{GW77}.  
\end{rem}
\section{Results in arbitrary characteristic}\label{section:thm0}
In this section we generalize some results in Section \ref{section:thmp} 
to arbitrary characteristic by the reduction to positive characteristic. 
In particular, we prove characteristic zero counterparts of 
Corollaries \ref{cor:notample} and \ref{cor:notbig} (Theorems \ref{thm:notample0} and \ref{thm:notbig0}). 
In the last of this section, we deal with morphisms which are special but not necessarily smooth, 
and show that images of Fano varieties are again Fano varieties. 
First we recall some classes of singularities in characteristic zero defined by reduction to positive characteristic. 
\begin{defn}\label{defn:Ftype}
Let $X$ be a normal variety over a field $k$ of characteristic zero, 
and $\Delta$ be an effective $\Q$-Weil divisor on $X$. 
Let $(X_R,\Delta_R)$ be a model of $(X,\Delta)$ over a finitely generated $\Z$-subalgebra $R$ of $k$. 
$(X,\Delta)$ is said to be of {\it dense $F$-pure type} (resp. {\it strongly $F$-regular type}) 
if there exists a dense (resp. dense open) subset $S\subseteq \Spec R$ such that 
$(X_{\mu},\Delta_{\mu})$ is {\it $F$-pure} (resp. strongly $F$-regular) for all closed points $\mu\in S$.
\end{defn}
\begin{rem}
The above definition can be generalized in an obvious way 
to the case where $X$ is a finite disjoint union of varieties over $k$.
\end{rem}
\begin{thm}[\textup{\cite[Corollary~3.4]{Tak04}}]\label{thm:strFreg} %\samepage
Let $X$ be a normal variety over a field of characteristic zero, 
and let $\Delta$ be an effective $\Q$-Weil divisor on $X$ such that $K_X+\Delta$ is $\Q$-Cartier. 
Then $(X,\Delta)$ is klt if and only if it is of strongly $F$-regular type.
\end{thm}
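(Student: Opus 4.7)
The plan is to follow Takagi's argument in \cite{Tak04}, whose crux is an identification of the multiplier ideal $\J(X,\Delta)$ in characteristic zero with the test ideal $\tau(X_\mu,\Delta_\mu)$ of its reduction modulo $p$. Granted this comparison, the theorem reduces to the standard equivalences ``$(X,\Delta)$ is klt $\iff \J(X,\Delta)=\O_X$'' and ``$(X_\mu,\Delta_\mu)$ is strongly $F$-regular $\iff \tau(X_\mu,\Delta_\mu)=\O_{X_\mu}$''.

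Assuming the comparison, both directions are formal. If $(X,\Delta)$ is klt, then $\J(X,\Delta)=\O_X$, so after reducing modulo $\mu$ we obtain $\tau(X_\mu,\Delta_\mu)=\O_{X_\mu}$ for every $\mu$ in a dense open subset of $\Spec R$, proving strongly $F$-regular type. Conversely, if $(X,\Delta)$ is of strongly $F$-regular type, then $\tau(X_\mu,\Delta_\mu)=\O_{X_\mu}$ on a dense subset of $\Spec R$; the comparison forces $\J(X,\Delta)\otimes_R\O_{X_\mu}=\O_{X_\mu}$ on that subset, and the flatness of the spreading-out together with Nakayama's lemma then gives $\J(X,\Delta)=\O_X$, i.e., $(X,\Delta)$ is klt.

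The main obstacle is thus to establish the comparison $\J(X,\Delta)\otimes_R\O_{X_\mu}=\tau(X_\mu,\Delta_\mu)$ on a dense open subset of $\Spec R$. I would begin with a log resolution $\pi\colon\tilde X\to X$ giving $K_{\tilde X}+\tilde\Delta=\pi^*(K_X+\Delta)+F$ with all coefficients of $F$ strictly greater than $-1$, so that by Kawamata-Viehweg vanishing $\J(X,\Delta)=\pi_*\O_{\tilde X}(\lceil F\rceil)$. After spreading out, $\pi_\mu$ remains a log resolution for $\mu$ in a dense open subset of $\Spec R$. On the positive-characteristic side, $\tau(X_\mu,\Delta_\mu)$ is realized as the stable image of iterated Grothendieck trace maps for the relative Frobenius of $\pi_\mu$ twisted by $\O_{\tilde X_\mu}(\lceil F_\mu\rceil)$; the surjectivity of those trace maps for $p\gg0$, in the spirit of Lemma \ref{lem:relFrob} combined with Serre-Fujita vanishing, together with Grauert-Riemenschneider vanishing applied to $\pi_\mu$, forces this image to stabilize at $(\pi_\mu)_*\O_{\tilde X_\mu}(\lceil F_\mu\rceil)=\J(X,\Delta)\otimes_R\O_{X_\mu}$. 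This step is where all the characteristic-zero input (log resolution, Kodaira-type vanishing) is transferred into a uniform positive-characteristic statement valid for a dense set of primes, and it is the technical heart of the argument.
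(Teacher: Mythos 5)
Note first that the paper does not prove Theorem \ref{thm:strFreg} at all: it is quoted verbatim from Takagi \cite{Tak04}, so there is no internal argument to compare with. Your outline is essentially the strategy of that cited source: establish $\J(X,\Delta)\cdot\O_{X_\mu}=\tau(X_\mu,\Delta_\mu)$ for all $\mu$ in a dense open subset of $\Spec R$, and then read off both implications from the equivalences ``klt $\iff\J(X,\Delta)=\O_X$'' and ``strongly $F$-regular $\iff\tau=\O_{X_\mu}$''. That formal reduction, including the converse direction via the fact that a nonzero cokernel of $\J(X,\Delta)\subseteq\O_X$ stays nonzero on a dense open set of closed fibers of the spreading out, is correct.

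Where the proposal is not yet a proof is precisely the comparison, and two of the tools you name would not do the job as stated. First, you appeal to Grauert--Riemenschneider vanishing for $\pi_\mu$; GR vanishing fails in general in positive characteristic, and what is actually used is that the characteristic-zero local vanishing $R^i\pi_*\O_{\tilde X}\bigl(\lceil K_{\tilde X}-\pi^*(K_X+\Delta)\rceil\bigr)=0$ (Kawamata--Viehweg applied to $\pi$) spreads out and specializes to all $\mu$ in a dense open subset --- a characteristic-zero input, not a characteristic-$p$ theorem. Second, and more seriously, the hard inclusion $\J(X,\Delta)_\mu\subseteq\tau(X_\mu,\Delta_\mu)$ for $p\gg0$ does not follow from Serre--Fujita vanishing together with trace surjectivity ``in the spirit of Lemma \ref{lem:relFrob}'': that lemma presupposes $F$-purity of the fibers, which is exactly the kind of conclusion one is trying to extract here. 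The genuine input is Hara's surjectivity theorem for the Frobenius (trace) action on top cohomology twisted by ample divisors for $p\gg0$, whose proof rests on the Deligne--Illusie degeneration results, extended to pairs by Hara--Yoshida and by Takagi; the easy inclusion $\tau\subseteq\J_\mu$ is the Hara--Watanabe direction \cite{HW02}. There is also a small slip in your setup: with $\tilde\Delta$ the strict transform, the multiplier ideal is $\pi_*\O_{\tilde X}\bigl(\lceil K_{\tilde X}-\pi^*(K_X+\Delta)\rceil\bigr)$; the divisor being rounded up must not contain $+\tilde\Delta$, otherwise the ideal you compute is too large (it would be trivial for $\Delta$ a smooth divisor with coefficient $1$, which is not klt). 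None of this changes the route --- it is the route of \cite{Tak04} --- but the technical heart is delegated to tools that, as named, would not suffice.
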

\begin{notation}\label{notation:zero}
Let $f:X\to Y$ be a surjective morphism between 
smooth projective varieties over an algebraically closed field of characteristic zero, 
and $\Delta$ be an effective $\Q$-divisor on $X$. 
\end{notation}
\begin{thm}\label{thm:notample0} %\samepage 
With the notation as in \ref{notation:zero}, assume that $Y$ is not a point. 
If $(X_y,\Delta_y)$ is of dense $F$-pure type $($resp. klt$)$ for every general closed point $y\in Y$, 
then $-(K_{X/Y}+\Delta)$ is not ample $($resp. not nef and big$)$. 
In particular, $-K_{X/Y}$ is not nef and big.
\end{thm}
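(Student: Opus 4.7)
The plan is to reduce to positive characteristic and then invoke Corollary \ref{cor:notample}.

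Suppose for contradiction that $-(K_{X/Y}+\Delta)$ is ample (resp. nef and big). Pick one general closed point $y_0\in Y$ such that $(X_{y_0},\Delta_{y_0})$ is of dense $F$-pure type (resp. klt, hence of strongly $F$-regular type by Theorem \ref{thm:strFreg}). Choose a finitely generated $\mathbb{Z}$-subalgebra $R\subset k$ large enough to carry a model $(X_R,Y_R,f_R,\Delta_R)$ of $(X,Y,f,\Delta)$ together with a section $y_{0,R}\in Y_R(R)$ specializing to $y_0$, so that $(X_{y_0,R},\Delta_{y_0,R})$ is also a model of $(X_{y_0},\Delta_{y_0})$. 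After shrinking $\Spec R$, we may assume that $X_R$ and $Y_R$ are smooth and projective over $R$, that $f_R$ is surjective, and that $-(K_{X_R/Y_R}+\Delta_R)$ is relatively ample (resp. relatively nef and big) over $R$; by openness of these positivity properties, the same holds on every closed fiber.

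Next, I transfer the $F$-pure (resp. strongly $F$-regular) type condition on the single fiber $(X_{y_0},\Delta_{y_0})$ to the geometric generic fiber of the reduction of $f$. By the definition of dense $F$-pure (resp. strongly $F$-regular) type, there is a dense subset $S\subseteq\Spec R$ such that $(X_{y_0,\mu},\Delta_{y_0,\mu})$ is $F$-pure (resp. strongly $F$-regular) for every closed $\mu\in S$. Fix such a $\mu$. Since this pair is the geometric fiber of $f_\mu:X_\mu\to Y_\mu$ over the closed point $y_{0,R}(\mu)$, Lemma \ref{lem:relFrob}(1) implies that the locus $V_0\subseteq Y_\mu$ of points with $F$-pure geometric fiber is open and non-empty, hence contains the generic point of $Y_\mu$. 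In particular, the geometric generic fiber of $f_\mu$ with its boundary is $F$-pure; the analogous openness for strong $F$-regularity handles the klt case.

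Now $f_\mu:X_\mu\to Y_\mu$ is a surjective morphism between smooth projective varieties over $\overline{k(\mu)}$ with $Y_\mu$ not a point, the divisor $-(K_{X_\mu/Y_\mu}+\Delta_\mu)$ is ample (resp. nef and big), and the pair on the geometric generic fiber is $F$-pure (resp. strongly $F$-regular). Corollary \ref{cor:notample}(1) (resp. (2)) then yields the desired contradiction. The final assertion about $-K_{X/Y}$ follows from the klt case with $\Delta=0$: generic smoothness in characteristic zero guarantees that the general closed fiber is smooth and therefore klt. The main technical hurdle lies in the second paragraph, namely in organizing the model $R$ so that the dense $F$-pure type hypothesis on a single fiber $(X_{y_0},\Delta_{y_0})$ propagates—via openness of $F$-purity in families—to $F$-purity of the geometric generic fiber of $f_\mu$ for a dense set of closed $\mu\in\Spec R$.
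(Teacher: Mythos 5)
Your argument for the ample/$F$-pure case is essentially the paper's: reduce mod $p$, use openness of $F$-purity in families (Lemma \ref{lem:relFrob}(1)) to pass from the $F$-pure closed fiber $(X_{\mu})_{y_\mu}$ to the geometric generic fiber of $f_\mu$, and apply Corollary \ref{cor:notample}(1); the paper leaves the openness step implicit, and you make it explicit, which is fine.

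The klt/nef-and-big case, however, has a genuine gap. You write that after shrinking $\Spec R$ you may assume $-(K_{X_R/Y_R}+\Delta_R)$ is relatively nef and big and that ``by openness of these positivity properties, the same holds on every closed fiber.'' Ampleness and bigness do spread out to the closed fibers over a dense open subset of $\Spec R$, but nefness does not: the nef locus in the base is in general only a countable intersection of open sets, and it is not known (and in general fails) that a nef divisor in characteristic zero remains nef on the reductions $X_\mu$ for a dense set of closed $\mu$. So you cannot feed ``nef and big'' into Corollary \ref{cor:notample}(2) after reduction. The paper avoids this entirely by staying in characteristic zero at this step: if $-(K_{X/Y}+\Delta)$ were nef and big, Kodaira's lemma produces $\Delta'\ge\Delta$ with $-(K_{X/Y}+\Delta')$ ample and $(X_y,\Delta'_y)$ still klt for general $y$; by Theorem \ref{thm:strFreg} this pair is of dense $F$-pure type, and the already-established ample case gives the contradiction. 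Your proof is repaired by inserting exactly this perturbation before reducing mod $p$ (this also removes the need to invoke openness of strong $F$-regularity in families, which is not stated in the paper). Since the final assertion about $-K_{X/Y}$ relies on the klt case, it inherits the same gap until this is fixed.
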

\begin{proof} 
Assume that $(X_y,\Delta_y)$ is of dense $F$-pure type for a general closed point $y\in Y$. 
Let $X_R$, $\Delta_R$, $Y_R$, $y_R$ and $f_R$ be respectively models of 
$X$, $\Delta$, $Y$, $y$ and $f$ over a finitely generated $\Z$-algebra $R$. 
We may assume that $(X_R)_{y_R}$ is a model of $X_y$ over $R$. 
Then there exists a dense subset $S\subseteq\Spec R$ such that 
$((X_y)_\mu,\Delta_\mu)$ is $F$-pure for every $\mu\in S$. 
Note that $(X_y)_\mu\cong (X_\mu)_{y_\mu}$ and $(\Delta_y)_\mu=(\Delta_\mu)_{y_\mu}$. 
Thus by Corollary \ref{cor:notample}, we see that $-(K_{X_\mu/Y_\mu}+\Delta_\mu)$ is not ample. 
This implies that $-(K_{X/Y}+\Delta)$ is not ample.
Next, we assume that $(X_y,\Delta_y)$ is klt for every general closed point $y\in Y$. 
If $-(K_{X/Y}+\Delta)$ is nef and big, then by Kodaira's lemma, 
there exists $\Delta'\ge\Delta$ such that $(X_y,\Delta'_y)$ is klt 
for a general closed point $y\in Y$ and $-(K_{X/Y}+\Delta')$ is ample. 
However, by Theorem \ref{thm:strFreg}, 
$(X_y,\Delta'_y)$ is of dense $F$-pure type, 
which contradicts to the above arguments. 
\end{proof}
\begin{thm}\label{thm:notbig0} %\samepage
With the notation as in \ref{notation:zero}, 
assume that $Y$ is not a point and that 
$(X_y,\Delta_y)$ is of dense $F$-pure type for a general closed point $y\in Y$. 
Let $\ol\eta$ be a geometric generic point of $Y$. 
If $\O_X(-m(K_{X/Y}+\Delta))|_{X_{\ol\eta}}$ is globally generated for some $m>0$ 
such that $m\Delta$ is integral, then $f_*\O_X(-m(K_{X/Y}+\Delta))$ is not big. 
\end{thm}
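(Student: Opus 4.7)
The plan is to mirror the strategy used in Theorem \ref{thm:notample0}: reduce to positive characteristic and invoke Corollary \ref{cor:notbig}. Suppose for contradiction that $\F := f_*\O_X(-m(K_{X/Y}+\Delta))$ is big. Fix a general closed point $y_0 \in Y$ for which $(X_{y_0}, \Delta_{y_0})$ is of dense $F$-pure type. Take compatible models $X_R, Y_R, \Delta_R, f_R, y_{0,R}$ of $X, Y, \Delta, f, y_0$ over a finitely generated $\Z$-subalgebra $R \subseteq k$, and enlarge $R$ enough so that $f_R$ is a surjective morphism between smooth projective $R$-schemes, the global generation of $\O_X(-m(K_{X/Y}+\Delta))|_{X_{\overline\eta}}$ spreads to global generation of $\O_{X_R}(-m(K_{X_R/Y_R}+\Delta_R))$ on the geometric generic fiber of $f_R$, and the assumed bigness of $\F$ spreads to $(f_R)_*\O_{X_R}(-m(K_{X_R/Y_R}+\Delta_R))$.

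Next, I would choose a closed point $\mu \in \Spec R$ satisfying three conditions: (i) $\mu$ lies in the dense subset provided by the dense $F$-pure type assumption, so that $((X_R)_{y_{0,R}})_\mu$ is $F$-pure; (ii) the residue characteristic $p$ of $\kappa(\mu)$ does not divide $m$; (iii) the spreading of global generation and of bigness survives the reduction modulo $\mu$. Since (ii) and (iii) are dense open conditions while (i) is dense, such a $\mu$ exists. Under (i), the geometric fiber $(X_\mu)_{\overline{y_{0,\mu}}}$ with boundary $(\Delta_\mu)_{\overline{y_{0,\mu}}}$ is $F$-pure. By Lemma \ref{lem:relFrob} (1), the locus $V_0 \subseteq Y_\mu$ over which the geometric fiber of $(f_\mu, \Delta_\mu)$ is $F$-pure is open; it contains $y_{0,\mu}$, hence is non-empty, and thus contains the generic point $\eta_\mu$. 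Consequently $((X_\mu)_{\overline{\eta_\mu}}, (\Delta_\mu)_{\overline{\eta_\mu}})$ is $F$-pure.

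At this point all the hypotheses of Corollary \ref{cor:notbig} are in place for $f_\mu$: we have $p \nmid m$, the geometric generic fiber is $F$-pure, and $\O_{X_\mu}(-m(K_{X_\mu}+\Delta_\mu))|_{(X_\mu)_{\overline{\eta_\mu}}}$ is globally generated by (iii). Therefore $(f_\mu)_*\O_{X_\mu}(-m(K_{X_\mu/Y_\mu}+\Delta_\mu))$ is not big, contradicting the bigness of $\F_\mu$ preserved in (iii). This contradiction completes the proof.

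The main obstacle will be the bookkeeping in the spreading-out step, specifically verifying that $(f_R)_*\O_{X_R}(-m(K_{X_R/Y_R}+\Delta_R))$ commutes with base change to $\kappa(\mu)$ for $\mu$ in a dense open subset, and that bigness of a coherent sheaf on a smooth projective variety descends from characteristic zero to almost every reduction. These follow from generic flatness together with cohomology and base change, plus the fact that the bigness of a coherent sheaf can be tested by the existence of a globally generated twist of some symmetric power, which is an open property in the family. The $F$-purity part and the application of Corollary \ref{cor:notbig} are then essentially immediate.
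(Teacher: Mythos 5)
Your proposal is correct and takes essentially the same route as the paper: reduce modulo a closed point $\mu\in\Spec R$ of residue characteristic $p\nmid m$ lying in the dense $F$-pure locus, use openness of $F$-purity (Lemma \ref{lem:relFrob} (1)) to pass from the closed fiber to the geometric generic fiber, apply Corollary \ref{cor:notbig}, and combine spreading out of bigness (which the paper isolates as Lemma \ref{lem:bigred}) with cohomology-and-base-change for the pushforward (the paper arranges $\dim H^0((X_\mu)_{y_\mu},-m(K_{X_\mu}+\Delta_\mu)_{y_\mu})=r$ and cites \cite[Corollary~12.9]{Har77}). The only point to phrase carefully is that the comparison map $\F_\mu\to {f_\mu}_*\O_{X_\mu}(-m(K_{X_\mu/Y_\mu}+\Delta_\mu))$ need only be surjective near $y_{0,\mu}$, hence at the generic point of $Y_\mu$, which is exactly what the $h^0$-constancy argument yields and suffices because bigness is a generic-point property.
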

\begin{proof} 
Set $\G:=f_*\O_X(-m(K_{X/Y}+\Delta))$ and $r:={\rm rank}\;\G$. 
Since $y\in Y$ is general, $f$ is flat at every point in $f^{-1}(y)$ 
and $\dim H^0(X_y,-m(K_{X_y}+\Delta_y))=r.$ 
Let $X_R$, $\Delta_R$, $Y_R$, $y_R$ and $f_R$ be respectively models of $X$, $\Delta$, $Y$, $y$ and $f$. 
By replacing $R$ if necessary, we may assume that 
${f_R}_*\O_{X_R}(-m(K_{X_R/Y_R}+\Delta_R))$ and $(X_R)_{y_R}$ are respectively models of $\G$ and $X_y$,
and that for every $\mu\in\Spec R$, $\dim H^0((X_\mu)_{y_\mu},-m(K_{X_\mu}+\Delta_\mu)_{y_\mu})=r.$ 
Hence by \cite[Corollary~12.9]{Har77}, the natural morphism 
$$\G_\mu={f_R}_*\O_{X_R}(-m(K_{X_R/Y_R}+\Delta_R))|_{Y_\mu}
\to {f_\mu}_*\O_{X_\mu}(-m(K_{X_\mu/Y_\mu}+\Delta_\mu))$$ 
is surjective over $y_\mu$. 
Since ${f_\mu}_*\O_{X_\mu}(-m(K_{X_\mu/Y_\mu}+\Delta_\mu))$ is not big as shown in Corollary \ref{cor:notbig}, 
$\G_\mu$ is also not big. Thus the lemma below completes the proof.
\end{proof}
\begin{lem}\label{lem:bigred} %\samepage
Let $\G$ be a torsion free coherent sheaf on a smooth projective variety $Y$ 
over an algebraically closed field of characteristic zero. 
Let $Y_R$ and $\G_R$ be models of $Y$ and $\G$ respectively over a finitely generated $\Z$-algebra. 
If $\G$ is big, then there exists a dense open subset $S\subseteq\Spec R$ 
such that $\G_\mu$ is big for every $\mu\in S$.
\end{lem}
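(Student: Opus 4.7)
The plan is to use the characterization from Observation \ref{obs:wp} that bigness of $\G$ is equivalent to the existence of integers $a, b, N > 0$ and a homomorphism
\[
\phi : \O_Y^{\oplus N} \to (S^b\G)^{**}(-aH)
\]
that is surjective at the generic point $\eta$ of $Y$. By Remark \ref{rem:wp}(3) I may replace $\G$ by $\G^{**}$ and assume $\G$ is reflexive; then the locally free locus $V \subseteq Y$ of $\G$ has complement of codimension at least two, and $(S^b\G)^{**}|_V = S^b(\G|_V)$, so $\phi$ restricts to a generically surjective morphism $\phi_V : \O_V^{\oplus N} \to S^b(\G|_V)(-aH|_V)$.

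After localizing $R$, standard spreading-out yields an open $V_R \subseteq Y_R$ with $V_R \cap Y = V$ on which $\G_R|_{V_R}$ is locally free, together with a morphism $\phi_R : \O_{V_R}^{\oplus N} \to S^b(\G_R|_{V_R})(-aH_R|_{V_R})$ restricting to $\phi_V$ on $V$. The cokernel of $\phi_V$ has support $Z \subsetneq V$; the cokernel of $\phi_R$ then has support $Z_R \subsetneq V_R$ whose fiber over the generic point of $\Spec R$ equals $Z$. A standard dimension/constructibility argument (each irreducible component of $Z_R$ either fails to dominate $\Spec R$ or has general fiber of dimension strictly smaller than $\dim Y$) shows that for $\mu$ in a dense open $S \subseteq \Spec R$, the fiber $(Z_R)_\mu$ is a proper closed subset of $V_\mu$. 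Applying the same reasoning to $Y_R \setminus V_R$ (whose generic fiber has codimension $\geq 2$), I further shrink $S$ so that for every $\mu \in S$, $Y_\mu \setminus V_\mu$ has codimension $\geq 2$ in $Y_\mu$ and $\G_\mu$ is locally free on $V_\mu$.

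For such $\mu$, $\phi_\mu : \O_{V_\mu}^{\oplus N} \to S^b(\G_\mu|_{V_\mu})(-aH_\mu|_{V_\mu})$ is surjective at the generic point $\eta_\mu$ of $Y_\mu$. Writing $j : V_\mu \hookrightarrow Y_\mu$, the reflexivity of $(S^b\G_\mu)^{**}$ combined with $\codim_{Y_\mu}(Y_\mu \setminus V_\mu) \geq 2$ yields
\[
(S^b\G_\mu)^{**}(-aH_\mu) = j_*\bigl(S^b(\G_\mu|_{V_\mu})(-aH_\mu|_{V_\mu})\bigr).
\]
Pushing $\phi_\mu$ forward and using $j_*\O_{V_\mu} = \O_{Y_\mu}$ (by normality of $Y_\mu$), I obtain a homomorphism $\O_{Y_\mu}^{\oplus N} \to (S^b\G_\mu)^{**}(-aH_\mu)$ surjective at $\eta_\mu$; hence $\G_\mu$ is big in the sense of Definition \ref{defn:positivity}.

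The main obstacle is that the double-dual does not commute with base change, so one cannot simply spread out $(S^b\G)^{**}$ and restrict to fibers. This is circumvented by passing to a reflexive $\G$ at the outset and working on the open $V$ with codim-$\geq 2$ complement, where $S^b$ and the double dual coincide and reflexive sheaves are determined by their restriction.
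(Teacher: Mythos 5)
Your proposal is correct and follows essentially the same route as the paper: both use Observation \ref{obs:wp} to express bigness by a morphism $\O_Y^{\oplus N}\to (S^b\G)^{**}(-aH)$ (equivalently $\bigoplus\O_Y(aH)\to S^b\G$ on the locally free locus) that is surjective at the generic point, spread this morphism out over $R$, and check that its reductions remain generically surjective. The only real difference is how that last point is verified: the paper simply shrinks $Y$ to the locally free locus, picks a general closed point $y$ where the map is surjective and uses its model $y_R$ (so surjectivity at $y_\mu$ gives generic surjectivity on $Y_\mu$), whereas you argue via constructibility of the fiber dimension of the cokernel's support and then restore the ambient projective $Y_\mu$ through the reflexive pushforward from the codimension-$\ge 2$ open — a slightly longer but equally valid bookkeeping of the same idea.
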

\begin{proof}
Let $H$ be an ample and free divisor on $Y$.
Replacing $Y$ by its appropriate open subset, we may assume that $\G$ is locally free. 
By Observation \ref{obs:wp} (1) and (2), we have an integer $c>0$ such that 
$(S^c\G)(-H)$ is generically globally generated.
In other words, there exists a generically surjective morphism 
$\varphi:\bigoplus\O_Y(H)\to S^c\G$.
Let $y\in Y$ be a general closed point. 
Then $\varphi$ is surjective in a neighborhood of $y$.
Let $\varphi_R$, $H_R$ and $y_R$ be models of $\varphi$, $H$ and $y$ over $R$, respectively. 
By replacing $R$ if necessary, we may assume that $\varphi_R$ is surjective over $y_R$. 
Thus for every closed point $\mu\in \Spec R$, the morphism 
$\varphi_\mu:\bigoplus\O_{X_\mu}(H_\mu)\to S^c\G_\mu$ obtained as the reduction of $\varphi_R$ 
is surjective over $y_\mu$. 
This implies that $\G_\mu$ is big, since $H_\mu$ is ample.
\end{proof}
Koll\'ar, Miyaoka and Mori \cite[Corollary 2.9]{KMM92} (cf. \cite[THEOREM 3]{Miy93}) proved that 
images of Fano varieties under smooth morphisms are again Fano varieties. 
The next theorem shows that the same statement holds when morphisms are not necessary smooth.
\begin{thm}\label{thm:toric}%\samepage
Let $f:X\to Y$ be a surjective morphism between smooth projective varieties 
over an algebraically closed field $k$ of any characteristic $p\ge0$, 
and let $\Delta$ be an effective $\Q$-divisor on $X$ such that 
$a\Delta$ is integral for some $0<a\in\Z\setminus p\Z$. 
Assume that for every closed point $x\in X$, there exist a neighborhood $U\subseteq X$ 
$($resp. $V\subseteq Y$$)$ of $x$ $($resp. $f(x)$$)$ and a commutative diagram 
\begin{align*} 
\xymatrix@R=20pt@C=20pt{ 
U \ar[r]^(0.5){\alpha} \ar[d]_{(f_V)|_U} & 
\mathbb A^m \ar@{=}[r] \ar[d]^{\varphi} & 
\Spec k[u_1,\ldots,u_m]  \\ 
V \ar[r]^(0.5){\beta} & 
\mathbb A^n \ar@{=}[r] & 
\Spec k[v_1,\ldots,v_n]}
\end{align*}
whose horizontal morphisms are \'etale, that the morphism $\varphi$ is defined as 
\begin{align*}
\varphi(a_1,\ldots,a_m)=
\left(\prod_{0<i\le l_1}a_{i},\prod_{l_1<i\le l_2}a_i,\ldots,\prod_{l_{n-1}<i\le l_n} a_{i}\right)
\textup{ with $0<l_1<\cdots<l_n\le m$,} 
\end{align*}
and that 
$$ \Delta|_U=\alpha^*\left(\sum_{l_n<i\le m}d_i\mathrm{div}(u_i)\right)
\textup{ with $d_{l_n+1},\ldots,d_m\in\Z_{(p)}\cap[0,1]$.} $$ 
In this situation, if $-(K_X+\Delta+f^*D)$ is ample for some $\Q$-Cartier divisor $D$ on $Y$, 
then so is $-(K_Y+D)$.
\end{thm}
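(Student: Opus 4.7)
The plan is to verify the hypotheses of Corollary \ref{cor:gl} (2) and then apply it directly. Specifically, in positive characteristic we need (a) $f$ is flat, (b) the support of $\Delta$ contains no component of any fiber, and (c) every geometric fiber $(X_{\ol y},\Delta_{\ol y})$ is $F$-pure; the characteristic zero case will then follow by reduction modulo $p$.

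For (a), since flatness is étale-local on source and target, it suffices to show the monomial map $\varphi\colon\mathbb{A}^m\to\mathbb{A}^n$ is flat. Miracle flatness applies: $\mathbb{A}^m$ is Cohen--Macaulay, $\mathbb{A}^n$ is regular, and every fiber is equidimensional of dimension $m-n$. Indeed the fiber over $(b_1,\ldots,b_n)$ decomposes as a product over $j=1,\ldots,n$, where the $j$-th factor is the smooth toric hypersurface $\{\prod_{l_{j-1}<i\le l_j}u_i=b_j\}\cong (\mathbb{G}_m)^{l_j-l_{j-1}-1}$ if $b_j\ne 0$, or the reduced union of coordinate hyperplanes $V(\prod_{l_{j-1}<i\le l_j}u_i)$ of the same dimension if $b_j=0$, times the free factor $\mathbb{A}^{m-l_n}$. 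Condition (b) is then immediate from the étale-local description, since $\mathrm{Supp}(\Delta)$ lies in the free directions $u_{l_n+1},\ldots,u_m$, which are transverse to the fibers of $\varphi$.

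The hard part is (c). Since $F$-purity is étale-local and stable under base change to a perfect closure, it reduces to the $F$-purity of the local fibers above together with the restriction of $\Delta$. Each such fiber is a product of smooth pieces, reduced unions of coordinate hyperplanes $V(u_1\cdots u_k)\subset \mathbb{A}^k$, and affine space; the hyperplane unions are $F$-pure by Fedder's criterion, since the monomial $(u_1\cdots u_k)^{p-1}$ does not lie in $(u_1^p,\ldots,u_k^p)$. The restriction of $\Delta$ is an SNC divisor $\sum d_i\,\mathrm{div}(u_i)$ on the free factor $\mathbb{A}^{m-l_n}$ with coefficients $d_i\in\Z_{(p)}\cap[0,1]$ (here $p\nmid a$ is used), and such an SNC pair is $F$-pure by another application of Fedder. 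Since products of $F$-pure pairs are $F$-pure, Corollary \ref{cor:gl} (2) applies and yields the theorem in positive characteristic.

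For characteristic zero, I would spread out $X$, $Y$, $f$, $\Delta$, $D$ and the étale-local diagrams to a model over a finitely generated $\Z$-subalgebra $R$ of $k$. For every closed point $\mu\in\Spec R$ whose residue characteristic does not divide $a$ or the denominators of the $d_i$, the same argument shows $-(K_{Y_\mu}+D_\mu)$ is ample; since ampleness is an open condition in families and holds on a dense subset of $\Spec R$, we conclude $-(K_Y+D)$ is ample. The main obstacle is the verification of $F$-purity of the geometric fibers: the toric local model must be unpacked carefully enough to exhibit the fibers as products of $F$-pure schemes with an SNC $\Z_{(p)}$-boundary, after which Corollary \ref{cor:gl} does the rest.
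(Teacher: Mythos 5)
Your proposal is correct and follows essentially the same route as the paper: reduce étale-locally to the monomial model, get flatness from equidimensionality of the fibers (miracle flatness), note that $\Delta$ contains no fiber component, verify $F$-purity of the closed fibers by a Fedder-type criterion, apply Corollary \ref{cor:gl} (2), and treat characteristic zero by reduction to characteristic $p\gg0$. The only difference is minor: the paper checks $F$-purity by a single application of \cite[Corollary 2.7]{HW02} to the fiber $Z$ cut out by the regular sequence $\varphi^*v'_1,\ldots,\varphi^*v'_n$, using the boundary $\mathrm{div}(g)|_Z$ with $g=\prod_{l_n<i\le m}u_i$, which dominates $\Delta$ restricted to the fiber, whereas you decompose the fiber as a product of tori, unions of coordinate hyperplanes and an affine space with SNC boundary, and then invoke Fedder on each factor together with stability of $F$-purity under products over a perfect field.
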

\begin{proof}
When the characteristic of $k$ is zero, 
one can easily check that the entire setting can be reduced to characteristic $p\gg0$. 
Thus we only need to consider the case when $p>0$. 
Let $x\in X$ be a closed point, and let $U$, $V$, $\alpha$, $\beta$ and $\varphi$ be as above. 
Set $y:=f(x)$, $\bm{a}:=\alpha(x)=(a_1,\ldots,a_m)\in \mathbb A^m$ 
and $\bm{b}:=\beta(y)=(b_1,\ldots,b_n)\in\mathbb A^n$. 
Set $u'_i:=u_i-a_i$ and $v'_j:=v_j-b_j$ for each $i$ and $j$.  Then 
$$ \varphi^*v'_j =\prod_{l_{j-1}<i\le l_j}(u'_i+a_i)-\prod_{l_{j-1}<i\le l_j}a_i $$ 
for $j=1,\ldots,n$, where $l_0:=0$. 
Set $g:=\prod_{l_n<i\le m}u_i.$ 
It is easy to check that the sequence $\varphi^*v'_1,\ldots,\varphi^*v'_n,g$ 
is $k[u_1,\ldots,u_m]$-regular. 
In particular, $Z:=Z(\varphi^*v'_1,\ldots,\varphi^*v'_n)\subseteq\mathbb A^m$ 
is equi-dimensional of dimension $m-n$.
Note that $Z$ is the fiber of $\varphi$ over $\bm b$.
Since \'etale morphisms are stable under base change, 
the morphism $\alpha_{\bm{b}}:U_{\bm b}\to Z$ obtained by 
the restriction of $\alpha$ to the fibers over $\bm b$ is again \'etale.
Replacing $V$ and $U$ if necessary, 
we may assume that $\beta^{-1}(\bm b)=\{y\}$. 
Then $U_y\cong U_{\bm b}$.
This implies that every closed fiber of $f$ is equi-dimensional, in particular $f$ is flat.
\begin{cl} 
$(X_y,\Delta_y)$ is $F$-pure for every closed point $y\in Y$. 
\end{cl}
If this claim holds, then the theorem follows from Corollary \ref{cor:gl}, 
because by the assumption the support of $\Delta$ does not contain any component of any fiber. 
Since $\sum_{l_n<i\le m}d_i\mathrm{div}(u_i)\le\mathrm{div}(g)$, 
it suffice to show that the pair $(Z, \mathrm{div}(g)|_Z)$ is $F$-pure around $\bm{a}$. 
Let $\mf m_{\bm a}$ be the maximal ideal of $\bm a$. 
Then it is easily seen that 
$$(\varphi^*v'_1\cdots\varphi^*v'_n)^{q-1}\cdot g^{q-1}\notin \mf m_{\bm{a}}^{[q]}$$ 
for every $q=p^e$. 
Thus by \cite[Corollary 2.7]{HW02}, the pair $(Z,\mathrm{div}(g)|_Z)$ is $F$-pure, 
which completes the proof. 
\end{proof}
\begin{eg} \label{eg:toric}
When $\Delta=0$, the assumptions of Theorem \ref{thm:toric} hold 
for flat toric morphisms with reduced closed fibers 
between smooth projective toric varieties over algebraically closed fields.
\end{eg}
\begin{eg}
\def\e{{\bm e}}
\def\v{{\bm v}}
Let $\{\e_1,\e_2,\e_3\}$ be the canonical basis of $\mathbb R^3$.
For integers $m,n\ge0$, we define $\v_{m,n}:=(1,m,n)\in\mathbb R^3$.
Let $\Sigma_{m,n}$ be the fan consisting of all the faces of the following cones:
\begin{align*}
& <\v_{m,n},\e_2,\e_2+\e_3>, <\v_{m,n},\e_2+\e_3,\e_3>, \\ 
&<\v_{m,n},-\e_2,\e_3>, <\v_{m,n},\e_2,-\e_3>, <\v_{m,n},-\e_2,-\e_3>, \\
& <-\e_1,\e_2,\e_2+\e_3>, <-\e_1,\e_2+\e_3,\e_3>, \\
&<-\e_1,-\e_2,\e_3>, <-\e_1,\e_2,-\e_3>, <-\e_1,-\e_2,-\e_3>.
\end{align*}
Let $X_{m,n}$ be the smooth toric 3-fold corresponding to the fan $\Sigma_{m,n}$
with respect to the lattice $\Z^3\subset\mathbb R^3$.
Then $X_{m,n}$ is a Fano variety if and only if $m,n\in\{0,1\}$.
The projection $\mathbb R^3\to\mathbb R^2:(x,y,z)\mapsto(x,y)$
induces a toric morphism $f:X_{m,n}\to Y_m$ from $X_{m,n}$ to the Hirzebruch surface 
$Y_m:=\mathbb P_{\mathbb P^1}(\O_{\mathbb P^1}\oplus\O_{\mathbb P^1}(m))$.
Set $\Delta=0$. Then one can check that 
$f$ satisfies the assumptions of Theorem \ref{thm:toric}, but is not smooth.
Hence by Theorem \ref{thm:toric}, we see that $Y_m$ is a Del Pezzo surface if $m=0,1$. 
In fact, it is well known that $Y_m$ is a Del Pezzo surface if and only if $m=0,1$.
\end{eg}
\section{Questions}\label{section:ques}
The first question is on a characteristic zero counterpart of Theorem \ref{thm:thmp}. 
Let $f:X\to Y$ be a flat surjective morphism between smooth projective varieties 
over an algebraically closed field of characteristic zero. 
Since $F$-pure singularities are considered to correspond to semi-log canonical singularities (cf. \cite{MS12}), 
we assume that every geometric fiber of $f$ is semi-log canonical. 
\begin{ques}\label{ques:zero} %\samepage
Suppose that $-K_X$ is ample (resp. nef, nef and big). 
Is $-K_Y$ also ample (resp. nef, nef and big)?
\end{ques}
The next question is on a generalization of Theorem \ref{thm:notample_intro}. 
Let $f:X\to Y$ be a generically smooth surjective morphism between smooth projective varieties 
over an algebraically closed field. Assume that $-K_{X/Y}$ is nef. 
By Theorem \ref{thm:notample_intro}, we see that 
if the Iitaka-Kodaira dimension $\kappa(X,-K_{X/Y})$ of $-K_{X/Y}$ is equal to $\dim X$, then $\dim Y=0$. 
In other words, if $\kappa(X,-K_{X/Y})$ is the maximum, then $\dim Y$ is the minimum. 
One can expect that this phenomenon is obtained as a consequence of a more general phenomenon. 
The following inequality is one of the possibilities.
\begin{ques}\label{ques:kod} %\samepage
Does the inequality $\kappa(X,-K_{X/Y})\le\kappa(X_{\ol\eta},-K_{X_{\ol\eta}})$ hold?
\end{ques}
If the inequality holds, then the above phenomenon can be viewed as its consequence. 
For instance, in characteristic zero, Question \ref{ques:kod} is known 
in the case where $X=\mb P(\E)$ for a vector bundle $\E$ of rank $r$ on $Y$ and $f$ is the natural projection. 
In this case, if $-K_{X/Y}$ is nef, then the numerical dimension of $-K_{X/Y}$ 
is equal to $r-1$ \cite[4.7.~Corollary]{Nak04} (cf. \cite{Yas11}). 
Thus $$\kappa(X,-K_{X/Y})\le r-1=\dim X_{\ol\eta}=\kappa(X_{\ol\eta},-K_{X_{\ol\eta}}).$$

\end{document}